\numberwithin{equation}{section}  
\def\sha{{\cal A}}
\def\shb{{\cal B}}
\def\shc{{\cal C}}
\def\shd{{\cal D}}
\def\shf{{\cal F}}
\def\shm{{\cal M}}
\def\shl{{\cal L}}
\def\shp{{\cal P}}
\def\shs{{\cal S}}
\def\P{\mathbb P}
\def\Q{\mathbb Q}
\definecolor{LB}{rgb}{0.5, 0.3, 0.80}
\newcommand{\g}{\gamma}
\newcommand{\al}{\alpha}
\newcommand{\N}{\mathbb N}
\newcommand{\be}{\beta}
\newcommand{\R}{\mathbb R}
\newcommand{\C}{\mathcal{C}}
\newcommand{\eps}{\varepsilon}
\newcommand{\E}{\mathbb E}
\newenvironment{ideaproof}{%
\proof}{\endproof}
\newtheorem{assumption}{Assumption}
\newtheorem{theorem}{Theorem}[section]
\newtheorem{notation}[theorem]{Notation}
\newtheorem{remark}[theorem]{Remark}
\newtheorem{definition}[theorem]{Definition}
\newtheorem{proposition}[theorem]{Proposition}
\newtheorem{lemma}[theorem]{Lemma} 
\newtheorem{corollary}[theorem]{Corollary} 
\title{McKean-Vlasov equations with singular coefficients - a review of recent results}
\author{Luca Bondi, Elena Issoglio and Francesco Russo}
\begin{document}
\date{July 28th, 2025}
\maketitle

\begin{abstract}
  
This paper focuses on recent works on McKean-Vlasov stochastic differential equations (SDEs) involving singular coefficients. 
After recalling the classical framework, we review existing recent literature   depending on the type of singularities of the coefficients: on the one hand they  satisfy some integrability and measurability conditions only, while on the other hand the drift is allowed to be a generalised function.   Different types of dependencies on the law of the unknown and different noises will also be considered.   
    McKean-Vlasov SDEs are closely   related to non-linear Fokker-Planck equations that are satisfied by the law (or its density) of the unknown.   These connections are often established also in this singular
setting and will be reviewed here. 
Important tools  for dealing with singular coefficients are also included in the paper, such as
Figalli-Trevisan superposition principle, Zvonkin transformation,  Markov marginal uniqueness, and stochastic sewing lemma.  
  \end{abstract}

{\bf Key words and phrases.}  Stochastic differential equations; McKean-Vlasov
SDEs; di\-stri\-bu\-tio\-nal drift;  martingale problem, $L^p$-$L^q$ coefficients.

{\bf 2020 MSC}. 60H10; 60H30; 60H50; 35K55; 35K67.

\tableofcontents

\section{Introduction}

This  paper is a review of recent works on McKean-Vlasov equations, also referred to as McKean-Vlasov SDEs, focussing on the cases of singular coefficients,
namely coefficients that do not satisfy the standard regularity assumptions like Lipschitz continuity. 
These  equations can be handled because of the
regularisation effect produced by (generally non-degenerate) stochastic noise,
similarly to the case of usual singular stochastic differential equations. 
This phenomenon is called {\it regularisation by noise} in the literature.

 McKean-Vlasov equations were originally introduced by McKean 
 in the mid 1960s \cite{McKean} 
and are stochastic differential equations where the drift and/or the diffusion coefficient depend on the
law of the unknown process  hence they are equations of the  form 
\begin{equation}\label{eq:McK-V}
\left\{
\begin{array}{l}
dX_t = b(t, X_t, \mathcal L^\P_{X_t}) dt + \sigma(t, X_t, \mathcal L^\P_{X_t}) dW_t\\
\mathcal L^\P_{X_t}\textrm{ is the law of } X_t \text{ under }\P, \\
   X_0\sim \nu,
\end{array}
\right.
\end{equation}
where 
  $ b:$ (resp. $\sigma:$) $[0,T]\times\R^d \times \shp(\R^d) \rightarrow \R^d$ (resp. $\R^{d\times m}$)
  and $\shp(\R^d)$ denotes the set of Borel probabilities on $\R^d$,
where $\nu$ is a prescribed initial probability, the unknown process is $X$, and $W$ is some $\R^m$-valued stochastic noise. The noise 
is typically a Brownian motion but different types of processes have been studied, such as $\alpha$-stable processes and fractional
Brownian noises.

\medskip

McKean-Vlasov equations have become very popular in recent years, after the well-known Saint-Flour lecture notes by A.S. Sznitman \cite{Sznitman}. 
They typically arise 
 as limit of interacting particle systems when the number of particles goes to infinity and when the dynamics of each particle depends on the empirical measure of all other particles. This classical case is reviewed in Section \ref{sec:sznitman}.
The specific dependence of $b$ and $\sigma$ on the law $\mathcal L^\P_{X_t}$ can be very different, for example  $b$ could be a function that depends on $\mathbb E (X_t)$, or a generic functional written as an integral with respect to
the (marginal) law  $\mathcal L^\P_{X_t}$, or it could be depending on the density of the law $\mathcal L^\P_{X_t}$
(if it exists). In the first case the dependence will be generally smooth (e.g.\ Lipschitz) with respect
  to the Wasserstein metric. 
  Concerning the density dependence case,  if we denote by $v$ the density of $\mathcal L^\P_{X_t}$, a typical dependence that can be found in the literature,
  is {\em pointwise} on the density $v(t,x)$.
  Another type of dependence is of {\it convolutional type}
  $(K \ast v)(t,x)$, where $K$ is a singular kernel; if $K$ is a bounded function,
  no existence of the density is necessary.
 When the dependence is pointwise,
 the general McKean-Vlasov equation takes the form
\begin{equation} \label{general_mckean_SDE}
 \left\{
 \begin{array}{l}
   dX_t=b(t,X_t,v(t,X_t))dt+\sigma(t,X_t,v(t,X_t))dW_t,\\
\mathcal L^\P_{X_t}(dx) = v(t,x)dx \\
   X_0\sim \nu,
 \end{array}
 \right.
\end{equation}
where 
$b: [0,T] \times \R^d \times \R \to\R^d$, (resp.\
$\sigma: [0,T] \times \R^d \times \R \to \R^{d \times m})$.

The dependence on the density of the law $\mathcal L^\P_{X_t}$ (if it exists) in a density pointwise way arises from particle systems governed by a so-called moderate interaction, first introduced by \cite{Oelschlaeger} and then further explored by  \cite{coppoletta, JourMeleard}, for smooth coefficients. This is the topic of Section \ref{sec:pdd}. We remark that in this paper we will    use the terminology {\it McKean-Vlasov SDE with density  dependence}  when the coefficients only depend on the marginal density $v$ of the process, even though often in the literature such SDEs are simply called McKean SDEs.
A modern application of McKean SDEs is the mean-field game models
started by \cite{lasrylions}, which models the limiting  behaviour
of interacting players at the Nash equilibrium.
The average agent behaves according to a solution of a
particular McKean SDE, see Section \ref{sec:stoc-contr}.
Related more recent references are  \cite{cardaliaguet, carmona-delarueI, carmona-delarueII,  lacker, cardaliaguetPrinceton}.

Departing from the classical case  explained in Section \ref{SClass}, we consider in Section \ref{sec:4} the case when the  coefficients $b$ and $\sigma$ are functions being only measurable with respect $(t,x)$,
with some integrability conditions.
In the case when the coefficients of the McKean-Vlasov SDE, namely  \eqref{eq:McK-V},
are smooth in the sense of Wasserstein, we review
 a series of works relaxing the continuity assumption on the coefficients to some  $L^p$-$L^q$ condition, following the idea of the well-known work by Krylov and R\"ockner \cite{kry-rock} for SDEs.  
In \cite{HuangWangSPA} the authors consider
SDEs, where the drift satisfies a
  Krylov-R\"ockner $L^p$-$L^q$-type dependence.
  In \cite{deRaynal} the author considers McKean-Vlasov equations with coefficients $b$ and $\sigma$ which depend on the law of
the process in a H\"older continuous way.
In \cite{HuangWang2021}, the authors study McKean-Vlasov SDEs with $\sigma$ independent of $\mu$,
and with drift discontinuous in Wasserstein distance, but continuous under total variation distance. 
Independently \cite{RocknerZhang} considers similar assumptions as in \cite{HuangWangSPA, HuangWang2021} but with a localised version of
$L^p$-$L^q$. 
We mention then \cite{GaleatiLing23}, where regularity assumptions on the coefficients are further relaxed. We review in details all the above papers in Section \ref{sec:LpLq_w}. Finally,  the contributions \cite{Olivrichtoma23, Olivrichtoma} deal with the $L^p$-$L^q$
space-time condition, but 
with a singular  density dependence  of convolutional type:
they are reviewed in Section~\ref{sec:LpLq_p}.

\medskip
 
The well-known link between the law of a solution to an SDE and
Fokker-Planck equations can be shown also  in the McKean-Vlasov case. Indeed,
   given a (weak) solution $(X,\P)$ of the McKean SDE above, one can apply It\^o's formula to the process
  $\varphi(X_t)$, where $\varphi$ is a suitable smooth function.
  One then  takes the expectation under $\P$ to obtain that the measure $ \mathcal L^\P_{X_t}$ solves the non-linear Fokker-Planck 
PDE (in the sense of distributions)
\begin{equation} \label{general_nonlinear_FP}
  \left\{
  \begin{array}{l}
   \partial_t \mu_t(dx)= \frac12\sum_{i,j=1}^d \partial_{ij} (a_{ij}(t,x,\mu_t(dx)) \mu_t(dx))
   -\sum_{i=1}^d \partial_i(b_i(t,x, \mu_t(dx))\mu_t(dx)) \\
   \mu_0(dx) = \nu,
   \end{array} 
   \right.
 \end{equation}
where $a:=\sigma\sigma^{\top}$.
The fact that the law of $X$ solves \eqref{general_nonlinear_FP} suggests that the McKean-Vlasov SDE constitutes a {\it probabilistic representation} of the PDE.
 A more difficult task is the converse relation, i.e.~given a solution
 $(\mu_t)$  to the non-linear Fokker-Planck PDE \eqref{general_nonlinear_FP},
 we want to construct a stochastic process $X$ and a probability $\P$
 such that $(X, \P)$  is a solution in law of the McKean-Vlasov SDE \eqref{general_mckean_SDE}.
This is reviewed in Section \ref{sc:PR} and is based on the so-called {\em superposition principle} by Figalli, Trevisan
 \cite{figalli, Trevisan} extended recently by \cite{bogachev_superposition}, explained in Section \ref{sc:figalli}.

 Consider the Fokker-Planck equation \eqref{general_nonlinear_FP}, in the case when $\mu$ is absolutely continuous with density $v$,  
  $b=0$ and
 $a(t,x,\mu) =  2 v(t,x)^{m-1} I_d, $ with $I_d$ the unit matrix on $\R^d$
   and  $m\geq 1$. This very popular PDE is indeed
    the classical porous media equation
 $$ \partial_t v= \Delta (v^m).$$ 
 This equation admits an important
class of solutions of the form  \eqref{eq:Barenblatt}, called {\it Barenblatt-Pattle} solution and first introduced in \cite{Baren}.
In  \cite{Ben_Vallois} the authors  show that the marginal laws of the solution to suitable McKean SDEs are Barenblatte-Pattle solutions,
providing (at the best of our knowledge) the first probabilistic representation in the classical porous media framework.
The classical porous media equation has been generalised to 
\begin{equation} \label{PMEIntro}
  \partial_t v= \Delta (\beta(v)), 
\end{equation}
where $\beta: \R \rightarrow \R$ is a monotone (possibly multivalued) function. 
A significant example of $\beta$ is the discontinuous
case $\beta(v) = v H(v-v_c)$, where $H$ is the Heaviside
function. It is motivated by the literature of complex systems,
in particular the so called {\it self-organised criticality}, see the monograph \cite{bak86}
and more specifically \cite{BanJa}.
Probabilistic representation of that PDE was given first in the case
of dimension $d=1$, again via a McKean type SDE,
by \cite{BRR, BRR2}.
Other general non-linear Fokker-Planck equations of porous media type, and related McKean SDEs,  with more general diffusion and drift coefficients were investigated by \cite{BCR2, BarbuRockSIAM, BarbuRoeckJFA, RehmeierPLap}.
We review all the above papers in Section \ref{sec:pm}. 

The solutions of a porous media equation with radially symmetric initial condition
  is also radially symmetric and naturally produce a new class of one-dimensional
  Fokker-Planck equations, which admit a significant probabilistic representation
  given by a new class of one-dimensional  McKean SDEs, see Section \ref{sec:radially}.
  Another class of conservative second order PDEs, which was recently interpreted
  (see \cite{RehmeierPLap})
  as Fokker-Planck type PDEs is the $p$-Laplace equation, whose associated McKean SDEs
  produce the so called $p$-Brownian motion, see Section \ref{sec:pLap}.
  The case $ p =2$ corresponds to the classical heat equation and the
  corresponding process is the classical Brownian motion.

 In some cases McKean SDEs
  can also be probabilistic representation of
  non-conservative Fokker-Planck type PDEs which involve a zero-order
  term, see Section    \ref{sec:non-con}.
  In this case the link between the solution of the PDE and
  the law of the process is not so direct and it is expressed via
  the so called {\it linking equation}, see e.g.\ \cite{LOR2, LOR1}.
  Some more unusual  relations between McKean type SDEs and
  Fokker-Planck type PDEs exist. 

  In \cite{bossy2011conditional} a conditional Langevin-type dynamics is considered as an alternative approach to Navier-Stokes equations for turbulent flows. Another example involving a conditional law in the McKean SDE is related to local stochastic volatility models \cite{labordere}, which is based on the Markov projection idea \cite{gyongy},
see
    Section \ref{sc:con_exp}.
Fokker-Planck type PDEs with terminal conditions
  can be also represented by McKean SDEs, see e.g. \cite{Izydorczyk}, which are
  related to the time-reversal of a diffusion \cite{haussmann_pardoux}, see also Section \ref{sc:time-rev}.

\medskip

Section \ref{sc:sing_McK} is dedicated to McKean SDEs with coefficients involving generalised functions, such as elements of negative Besov spaces. Due to the fact that generalised functions cannot be evaluated at points, the equations are only formal  at this stage, and one has to give a mathematically precise meaning to them before proceeding to study well-posedness. This is in contrast to the singular McKean SDEs presented in Section \ref{sec:4}.

In the framework of density pointwise dependence, the well-posedness of the 
singular McKean-Vlasov SDE \eqref{general_mckean_SDE} was first studied in \cite{issoglio_russoMK} in the case of additive Brownian noise and a drift of the form $b(t, x, v)=F(v)\shb(t, x)$ where $\shb(t,\cdot)$ is an element of a negative Besov-H\"older space and $F$ a smooth non-linearity. Weak well-posedness is established in \cite{issoglio_russoMK}.
Similar equations were studied in \cite{chaudru_jabir_menozzi22,chaudru_jabir_menozzi23} where the additive noise is an  $\alpha$-stable process and the  drift  is of convolution type, namely of the form 
$b(t,x,\mu) = \left(\shb(t,\cdot) \ast \mu\right)(x)$ with $\shb(t,\cdot)$  an element of a negative Besov space. 
 They prove well-posedness of said SDE and heat kernel estimates.  
Finally we cite the work  \cite{GaleatiGerencser25} 
where the drift  depends on the law in a convolutional way like in \cite{chaudru_jabir_menozzi22,chaudru_jabir_menozzi23} but the additive noise is a  fractional Brownian motion. They prove strong existence and path-by-path uniqueness.

A generalisation of (\ref{general_mckean_SDE}) that has been considered in the literature is the kinetic or
degenerate setting, where some components of the diffusion coefficient are zero.
We first mention \cite{veretennikov2023weak, rondelli},
where weak well-posedness was established,
when the drift (resp. the diffusion) involve
a convolution of a continuous bounded function $\shb$ (resp. $\sigma)$,
with respect to the marginal law, see Section \ref{sc:sing_deg_McK}.
We then mention  \cite{zhang2021second}, who also considers a
convolutional model with respect to a more irregular function.
We also mention \cite{hao2021singular}  in which the drift has a convolutional dependence of the law with
a generalised function living in some negative Besov space. 
An  irregular degenerate model beyond the kinetic equation
is investigated in \cite{issoglio_et.al24}, where the drift of the non-degenerate  components of $X$ is  of the form $b(t, x, v)=F(v)\shb(t, x)$ as in \cite{issoglio_russoMK} and where the authors prove weak well-posedness.

Our goal for Section \ref{sc:sing_McK} is to provide a comprehensive and detailed methodology for defining and solving McKean SDEs with (Schwartz) distributional coefficients. To this aim, we  present in a unified framework the works \cite{issoglio_russoMK, issoglio_russoPDEa, issoglio_russoMPb, issoglio_et.al24}. 
Section \ref{sec:top-prelim}  covers useful topological preliminaries that introduce the function spaces in which we will work and key analytical tools such as Schauder estimates and Bernstein inequalities. In Section \ref{sec:sol-singular-SDE} we review some possible ways to define solutions to singular  SDEs, which is a delicate point since distributions cannot be evaluated at points.  This definition will be at the basis of the corresponding definition for McKean-Vlasov SDEs, which will be introduced and used below in Section \ref{sec:McKean-point}.
Since weak (in law) solutions of classical SDEs are equivalent to martingale problems, see e.g.
Section 5.4 A and B of
\cite{karatzasShreve}, 
we will often interpret rough SDEs by the notion of  rough martingale problem, see Definition \ref{def:MK_MP}.
Section \ref{ssc:singularPDE} reviews results on the singular backward Kolmogorov PDE, which is fundamental to define the rough martingale problem. The Kolmogorov PDE well-posedness is also a delicate issue because of the presence of the
distributional coefficients.
Section \ref{sec:linearSDE} is dedicated to finding a solution to the singular SDE via rough martingale problem formulation. This result is used together with a well-posedness result for the corresponding singular non-linear Fokker-Planck equation (established in Section \ref{sec:singularFP}) to finally prove existence and uniqueness of a solution to the rough McKean SDE in Section \ref{sec:McKean-point}. 
In  Section \ref{sc:sing_deg_McK} we illustrate the application of this blueprint in a degenerate case.

\medskip 

Some of the  recurrent tools and methodologies that are used throughout the analysis of singular McKean-Vlasov SDEs in Sections \ref{sec:4} and \ref{sc:sing_McK} are collected and explained  in Section \ref{sc:Tools}. These include the
Figalli-Trevisan {\em superposition principle}  that allows to construct a stochastic process solution to an SDE, where the law of the process is prescribed as the solution to a Fokker-Planck equation, see Sections \ref{sc:figalli} and \ref{sc:PR}; the {\em Zvonkin transformation}, that allows to remove the drift of an SDE via the solution of a Kolmogorov-type PDE, see Section \ref{ssc:zvonkin};  a way to obtain the {\em uniqueness} in law of martingale problems in a Markovian context, namely a tool to lift the uniqueness of time-marginal distributions to the uniqueness of the whole distribution, see Section \ref{sc:ethier-kurtz}; and the {\em stochastic sewing lemma} that is useful to construct a stochastic process whose increments are close to  a given family of stochastic increments, see Section \ref{sec:ssl}.

\medskip 
This paper is organised as follows. In Section \ref{SClass} we review the classical setting, which includes the case of  Wasserstein-type dependence on the law, the case of density pointwise dependence on the law, and some other examples, all involving smooth enough coefficients. Section \ref{sc:Tools} covers specific tools and methodologies which are useful for the study  of singular McKean-Vlasov SDEs, which are then used in the last two sections: Section \ref{sec:4} is devoted to the case of non smooth coefficients which are still functions, while Section \ref{sc:sing_McK} is dedicated to the case of distributional coefficients.

\section{Classical McKean-Vlasov SDEs}

\label{SClass}

Let us consider the first equation \eqref{eq:McK-V}. We will use the terminology of {\it classical McKean SDE}
  in the following two cases:
  \begin{itemize}
  \item $b, \sigma$ depend in a smooth way
   (essentially Lipschitz) with respect to the Wasserstein metric;
  \item $b, \sigma$ have a smooth pointwise density dependence.
  \end{itemize}
 The first one involves the whole law distribution $\shl^\P_{X_t}$ at time $t$ of the process $X$, whilst the second one takes into account only the
value of the law density calculated in $X_t$ at time $t$. 
The precise dependence on the law of the process $X$ will be specified case by case.
In the sequel we will say that $a = \sigma \sigma^\top $ is \emph{(uniformly) non-degenerate} if
\begin{equation} \label{eq:Non-deg}
a(t,x,\mu) \xi \cdot \xi \ge {\rm const} \vert \xi \vert^2, \forall \xi \in \R^d, 
\end{equation}
where the constant does not depend on the arguments of $a$.

We want to make the reader aware of the following fact:
in the pointwise density dependence case,
quantities such as moments of the corresponding law cannot be evaluated, since this would require  the evaluation of $v$ on the entire space $\R^d$, an information which is not accessible.

In the sequel of the section will focus on well-posedness and
on approximation results deriving from particle systems.

\subsection{Wasserstein-type dependence}\label{sec:sznitman}

If $E$ is a Fréchet space, we will denote by
$\mathcal{P}(E)$ the space of
Borel probability measures on $E$.
Let $\Omega_T:=C([0,T],\R^d)$
be  the so called canonical space.
We will use  
the letter $\mu$ to denote a generic measure in $\mathcal{P}(\Omega_T)$ and with $\mu_t$ its marginal at time $t$
i.e.\
\begin{equation} \label{eq:m}
  \mu_t(A):= \mu\{\omega\in\Omega_T : \omega(t)\in A\},
  \quad\forall t\in [0,T],\quad \forall A\in\mathcal{B}(\R^d),
\end{equation}
which is a Borel probability measure on $\R^d$. We consider now a drift coefficient $b$ of the  form
  \begin{equation}
    b(t,x,\mu_t)=\int_{\R^d} \Tilde{b}(x,y)\mu_t(dy)=\E^{\mu_t}[\Tilde{b}(x,\cdot)],
  \end{equation}
where $\mu_t \in \mathcal P (\R^d)$ is defined above and $\Tilde{b}:\R^d\times\R^d\to\R^d$ is some given bounded Lipschitz continuous function.
Evaluating \eqref{eq:m} for $\mu_t = \mathcal L^\P_{X_t}$, we write
\begin{equation}
  b(t,X_t,\mathcal{L}^{\P}_{X_t})=\int_{\R^d} \Tilde{b}(X_t,y)\mathcal
  L^{\P}_{X_t}(dy)=\E^{\mathcal L^{\P}_{X_t}}[\Tilde{b}(X_t,\cdot)].
  \end{equation}
 McKean-Vlasov  SDEs of the form
\begin{equation}\label{eq:sznitmanSDE}
    \left\{
\begin{array}{l}
  dX_t = 
 b(t,X_t,\mathcal{L}^{\P}_{X_t})dt +dW_t \\
\mathcal L^\P_{X_t} \textrm{ is the law of } X_t \textrm{ under }\P,
\end{array}
\right.
\end{equation}
were considered in \cite{Sznitman}.
Existence and uniqueness of a solution to (\ref{eq:sznitmanSDE})
was established using a fixed point argument. In order to introduce the result, we need the definition of the so-called \textit{Wasserstein distance}. Given any two probability measures $\mu,\nu\in\shp(\Omega_T)$ we define
\begin{equation}\label{1_wasser}
  \mathcal{W}_T(\mu,\nu):=\inf_{\pi\in\mathcal{P}(\Omega_T \times \Omega_T)}\bigg\{\int(\sup_{s\leq T}|\omega_1(s)-\omega_2(s)|\wedge1)d\pi(\omega_1,\omega_2):
\omega_1 \sim \mu, \omega_2 \sim \nu
  \bigg\}.
  \end{equation}
In addition to this, we define $\Phi$ the map that associates $\mu\in\mathcal{P}(\Omega_T)$ to
$\Phi(\mu)$
\[
\begin{array}{llll}
    \Phi:  & \mathcal P(\Omega_T)& \to  & \mathcal P(\Omega_T)\\
           & \mu & \mapsto &\Phi (\mu),
\end{array}
\] 
where $\Phi(\mu)$ 
is the law of the solution $(X_t)$ to the classical SDE
\begin{equation} \label{eq:lim}
    X_t=X_0+W_t+\int_0^t\bigg(\int_{\R^d} \tilde b(X_s,y)d\mu_s(y)\bigg)ds, \quad t\leq T.
\end{equation}
One looks for a fixed point of the map $\Phi$, because this would give a solution to \eqref{eq:sznitmanSDE}.
This can be obtained via the following. 
\begin{lemma}[Lemma 1.3 in \cite{Sznitman}] \label{lemma:Sznitman}
    For $t\leq T$ and any $\nu,\mu \in \mathcal P(\Omega_T)$ the following inequality for the Wasserstein distance holds
    \begin{equation}\label{gronwall_formulation}
        \mathcal{W}_t(\Phi (\mu),\Phi(\nu))\leq C_T\int_0^t \mathcal{W}_u(\mu,\nu)du\leq C_T T \mathcal{W}_T(\mu,\nu),
    \end{equation}
where $C_T$ depends on $T$ and on the Lipschitz constant of $b$.
  \end{lemma}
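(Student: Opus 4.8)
The plan is to bound $\mathcal{W}_t(\Phi(\mu),\Phi(\nu))$ by means of an explicit synchronous coupling of the two laws, followed by a Gr\"onwall argument. On a common filtered probability space, let $W$ be an $\R^d$-valued Brownian motion and $X_0$ the prescribed initial random variable. Once the flow of time-marginals in \eqref{eq:lim} is frozen, the drift $(s,x)\mapsto\int_{\R^d}\tilde b(x,y)\,\mu_s(dy)$ is bounded by $\|\tilde b\|_\infty$ and Lipschitz in $x$ with the Lipschitz constant $L$ of $\tilde b$, uniformly in $(s,\mu)$; hence \eqref{eq:lim}, being an ordinary It\^o SDE with unit diffusion, has a pathwise unique strong solution for each frozen flow. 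Let $X$ be the solution driven by $(W,X_0)$ with flow $(\mu_s)$ and $Y$ the one with flow $(\nu_s)$. Then $\mathrm{Law}(X)=\Phi(\mu)$ and $\mathrm{Law}(Y)=\Phi(\nu)$, so $(X,Y)$ is an admissible coupling in \eqref{1_wasser}, and
\begin{equation*}
\mathcal{W}_t(\Phi(\mu),\Phi(\nu))\le\E\Big[\sup_{s\le t}|X_s-Y_s|\wedge 1\Big]\le\E\Big[\sup_{s\le t}|X_s-Y_s|\Big].
\end{equation*}

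Subtracting the two equations, the Brownian contributions cancel; adding and subtracting $\int_{\R^d}\tilde b(Y_s,y)\,\mu_s(dy)$ and using the Lipschitz continuity of $\tilde b$ in its first argument gives, for every $s\le t$,
\begin{equation*}
|X_s-Y_s|\le L\int_0^s|X_r-Y_r|\,dr+\int_0^s g(r)\,dr,
\end{equation*}
where $g(r):=\big|\int_{\R^d}\tilde b(Y_r,y)\,\mu_r(dy)-\int_{\R^d}\tilde b(Y_r,y)\,\nu_r(dy)\big|$. Setting $h(s):=\sup_{r\le s}|X_r-Y_r|$, Gr\"onwall's lemma yields $h(t)\le e^{Lt}\int_0^t g(r)\,dr$ pathwise.

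The key step is the deterministic estimate $g(r)\le C\,\mathcal{W}_r(\mu,\nu)$ with $C:=L\vee(2\|\tilde b\|_\infty)$. Indeed, for any coupling $\pi$ of $\mu$ and $\nu$ on $\Omega_T\times\Omega_T$, the pointwise bound $|\tilde b(z,y_1)-\tilde b(z,y_2)|\le(L|y_1-y_2|)\wedge(2\|\tilde b\|_\infty)\le C\,(|y_1-y_2|\wedge 1)$, valid for all $z$, gives
\begin{equation*}
g(r)=\Big|\int\big(\tilde b(Y_r,\omega_1(r))-\tilde b(Y_r,\omega_2(r))\big)\,d\pi(\omega_1,\omega_2)\Big|\le C\int\Big(\sup_{u\le r}|\omega_1(u)-\omega_2(u)|\wedge 1\Big)\,d\pi,
\end{equation*}
and taking the infimum over $\pi$ gives $g(r)\le C\,\mathcal{W}_r(\mu,\nu)$; note that this bound does not involve the realisation of $Y_r$. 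Combining with the previous two displays,
\begin{equation*}
\mathcal{W}_t(\Phi(\mu),\Phi(\nu))\le\E\big[h(t)\big]\le Ce^{Lt}\int_0^t\mathcal{W}_r(\mu,\nu)\,dr=:C_T\int_0^t\mathcal{W}_r(\mu,\nu)\,dr,
\end{equation*}
which is the first inequality, with $C_T:=Ce^{LT}$. The second inequality is immediate: for $r\le T$ one has $\sup_{u\le r}|\omega_1(u)-\omega_2(u)|\le\sup_{u\le T}|\omega_1(u)-\omega_2(u)|$, so every coupling has smaller $\mathcal{W}_r$-cost than $\mathcal{W}_T$-cost and hence $\mathcal{W}_r(\mu,\nu)\le\mathcal{W}_T(\mu,\nu)$, whence $\int_0^t\mathcal{W}_r(\mu,\nu)\,dr\le t\,\mathcal{W}_T(\mu,\nu)\le T\,\mathcal{W}_T(\mu,\nu)$.

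I expect the estimate on $g$ to be the only genuinely delicate point: one must separate the difference coming from freezing the measure (handled by Lipschitz continuity in the state variable) from the difference coming from replacing $\mu_r$ by $\nu_r$, and recognise that the latter is controlled by $\mathcal{W}_r(\mu,\nu)$ precisely because the $\wedge 1$ truncation in the cost \eqref{1_wasser} is compensated by the boundedness of $\tilde b$. Everything else — the well-posedness of the frozen linear SDE, the synchronous coupling, and the Gr\"onwall step — is routine.
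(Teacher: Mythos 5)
Your proof is correct and follows exactly the route the paper indicates (the paper itself only cites Lemma 1.3 of Sznitman and remarks that it is ``an easy application of Gr\"onwall's inequality''): synchronous coupling of the two frozen-flow SDEs, splitting of the drift difference into a Lipschitz-in-$x$ term and a measure-difference term, the bound of the latter by $\bigl(L\vee 2\|\tilde b\|_\infty\bigr)\,\mathcal{W}_r(\mu,\nu)$ via an arbitrary coupling, and Gr\"onwall. Your handling of the $\wedge 1$ truncation against the boundedness of $\tilde b$ is precisely the point that makes the estimate work, so nothing is missing.
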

  
  This lemma is proven through an easy application of Gr\"onwall's inequality and it represents the main tool to prove the theorem below,
  of which we provide the proof for completeness.
\begin{theorem}[Theorem 1.1 in \cite{Sznitman}] \label{thm_sznitman}
    There exists a unique strong solution to (\ref{eq:sznitmanSDE}).
  \end{theorem}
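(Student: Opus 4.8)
The plan is to realise a solution to \eqref{eq:sznitmanSDE} as a fixed point of the map $\Phi$ introduced above, and to produce such a fixed point by a Banach-type argument in which the analytic work is entirely delegated to Lemma~\ref{lemma:Sznitman}. First I would check that $\Phi$ is well defined on $\mathcal P(\Omega_T)$. For fixed $\mu\in\mathcal P(\Omega_T)$ the coefficient $(s,x)\mapsto\int_{\R^d}\tilde b(x,y)\mu_s(dy)$ is bounded and Lipschitz in $x$ uniformly in $s$ (since $\tilde b$ is bounded and Lipschitz) and measurable in $s$ (since $s\mapsto\mu_s$ is), so the classical SDE \eqref{eq:lim}, which has additive Brownian noise, admits a unique strong solution $X^\mu$ with continuous paths; hence $\Phi(\mu):=\mathcal L^\P_{X^\mu}\in\mathcal P(\Omega_T)$ is unambiguously defined. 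Moreover $\mu$ is a fixed point of $\Phi$ if and only if $X^\mu$ is a strong solution of \eqref{eq:sznitmanSDE}: if $\Phi(\mu)=\mu$ then $\mu_s=\mathcal L^\P_{X^\mu_s}$, and substituting this identity into \eqref{eq:lim} yields \eqref{eq:sznitmanSDE}; conversely the law of any strong solution of \eqref{eq:sznitmanSDE} is fixed by $\Phi$ by the very definition of $\Phi$.

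Next I would iterate the estimate \eqref{gronwall_formulation}. Using the first inequality there together with $\mathcal W_u\le\mathcal W_T$ for $u\le T$, a straightforward induction on $n$ gives
\[
\mathcal W_T(\Phi^n(\mu),\Phi^n(\nu))\le\frac{(C_T T)^n}{n!}\,\mathcal W_T(\mu,\nu),\qquad\mu,\nu\in\mathcal P(\Omega_T).
\]
Since $(C_T T)^n/n!\to0$, there is $n_0$ with $(C_T T)^{n_0}/n_0!<1$, so $\Phi^{n_0}$ is a strict contraction on the metric space $(\mathcal P(\Omega_T),\mathcal W_T)$, which is complete and separable (the truncated distance $\rho(\omega_1,\omega_2)=\sup_{s\le T}|\omega_1(s)-\omega_2(s)|\wedge1$ is a complete separable metric on $\Omega_T=C([0,T],\R^d)$, and the associated $1$-Wasserstein space inherits these properties). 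By Banach's fixed point theorem $\Phi^{n_0}$ has a unique fixed point $\mu^\star$. Since $\Phi^{n_0}(\Phi(\mu^\star))=\Phi(\Phi^{n_0}(\mu^\star))=\Phi(\mu^\star)$, the measure $\Phi(\mu^\star)$ is also a fixed point of $\Phi^{n_0}$, hence $\Phi(\mu^\star)=\mu^\star$ by uniqueness; thus $\mu^\star$ is the unique fixed point of $\Phi$.

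Existence and uniqueness for \eqref{eq:sznitmanSDE} then follow from the equivalence established in the first step: the process $X^{\mu^\star}$ solving \eqref{eq:lim} with $\mu=\mu^\star$ is a strong solution of \eqref{eq:sznitmanSDE}, and if $X$ is any strong solution then its law is a fixed point of $\Phi$, hence equals $\mu^\star$, so $X$ solves the classical SDE \eqref{eq:lim} with $\mu=\mu^\star$, for which pathwise uniqueness holds (Lipschitz drift, additive noise); therefore $X=X^{\mu^\star}$ almost surely. I do not expect a genuine obstacle here: the estimate of Lemma~\ref{lemma:Sznitman} already contains the whole difficulty, and the only points requiring care are the measurability and boundedness bookkeeping needed to invoke classical strong well-posedness for \eqref{eq:lim}, the completeness of the Wasserstein space, and the elementary passage from "some iterate of $\Phi$ is a contraction" to "$\Phi$ has a unique fixed point" — which is precisely why one iterates rather than attempting a direct contraction of $\Phi$.
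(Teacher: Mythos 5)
Your proposal is correct and follows essentially the same route as the paper: both iterate Lemma~\ref{lemma:Sznitman} to obtain the factorial bound $C_T^n T^n/n!$ on $\mathcal W_T(\Phi^n(\mu),\Phi^n(\nu))$ and deduce a unique fixed point of $\Phi$, the only difference being that the paper runs the summable-increments/Cauchy-sequence argument directly while you package it as ``some iterate of $\Phi$ is a strict contraction'' plus Banach's theorem. Your additional bookkeeping (well-definedness of $\Phi$, equivalence of fixed points with strong solutions, completeness of the Wasserstein space) is correct and is left implicit in the paper.
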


\begin{proof}

  A solution will be in fact a  fixed point of $\Phi$ with respect to $\mathcal{W}_T$.
To construct it, let $\mu \in \shp(\Omega_T)$.
For each $ k \in \N$, we apply $k$ times the first inequality
and once the second one in  
\eqref{gronwall_formulation} in
Lemma \ref{lemma:Sznitman}
to the pairs of measures $(\Phi^{k+1}(\mu),\Phi^{k}(\mu))$ for all $k\in\N$ to get
\begin{equation*}
  \sum_{k=0}^{+\infty}  \mathcal{W}_T(\Phi^{k+1} (\mu),\Phi^k(\mu))\leq  \sum_{k=0}^{+\infty} C_T^k \frac{T^{k}}{k!}\mathcal{W}_T(\Phi(\mu),\mu) < +\infty.
\end{equation*}
Consequently the sequence  $ (\Phi^k(\mu))$ is Cauchy, therefore there is $\ell \in \shp(\Omega_T)$
such that $(\Phi^k(\mu))$ converges to $\ell$.
There, being $\Phi$ continuous we get $\Phi(\ell) = \ell$.
The uniqueness of the fixed point comes again by a repeated application
of Lemma \ref{lemma:Sznitman} to two given fixed points $\mu^1, \mu^2$.
Indeed for every positive integer $k$ we have
$$ \mathcal{W}_T(\mu^1,\mu^2) = \mathcal{W}_T(\Phi^k(\mu^1),\Phi^k(\mu^2))
\le C_T^k \frac{T^{k}}{k!}\mathcal{W}_T(\mu^1,\mu^2).$$
Choosing $k$ large enough so that $ C_T^k \frac{T^{k}}{k!}$ is strictly
smaller than $1$ we get $\mathcal{W}_T(\mu^1,\mu^2) =0$.
\end{proof}

By a direct application of It\^o's formula to $\varphi(X_t)$ where $\varphi$
is a smooth test function on $\R^d$ with compact support and $X$
is the solution of \eqref{eq:sznitmanSDE}, one easily sees that 
the function $\mu:[0,T] \rightarrow \shp(\R^d)$ characterizing
the marginal laws of $X$ is a solution of the integro-PDE
\begin{equation}
  \partial_t \mu_t = \frac12 \Delta \mu_t-\textrm{div}\left(\int_{\R^d} \tilde b(x,y)\mu_t(dy)\mu_t\right),
\end{equation}
in the sense of distributions.

A significant property (see Theorem 1.4 of \cite{Sznitman})
of \eqref{eq:lim} is that it can be seen as the asymptotic behaviour,
as $N$ goes to $\infty$,
of a single particle $X^{i,N}$ in 
the
interacting particle system
\begin{equation}
\left\{
\begin{array}{l}
    dX_t^{i,N}=\frac{1}{N}\sum_{j=1}^N \tilde b(X_t^{i,N},X_t^{j,N})dt+dW^i_t\\
    X^i_{t=0}=X^i_0,
\end{array}
\right.
\end{equation}
where $\tilde b$ is as above and $X_t^{i,N}$ (or $X_t^{j,N}$) denotes the $i$-th (or $j$-th) particle at time $t\leq T$ in a system of $N$ particles, for all $i,j=1,\dots,N$ with $N\in\N$.
This is equivalent with the fact that the empirical measure
$\frac{1}{N} \sum_{j=1}^N \delta_{X^{j, N}}$ converges weakly 
to the (deterministic) law of $X$ (under $\P$).
In particular, concerning the marginal laws, the random probability measure $\frac{1}{N} \sum_{j=1}^N \delta_{X^{j, N}_t} \in \shp(\R^d)$  converges in law to the constant probability measure
$\mu_t( dx)$ for each $t \in [0,T]$, 
 where 
  $\mu_t(dx)$ is the 
 time marginal $\shl^\P_{X_t}$
 at time $t$.
This is explained in the ``laboratory example I.~1)'' in \cite{Sznitman}.

\subsection{Pointwise density dependence}\label{sec:pdd}

We consider here a class of coefficients
depending on the probability density $v(t,x)$ of the law $\shl^\P_{X_t}$.
This type of dependence is more singular than the one just presented, and, in particular, it is not Wasserstein continuous,
therefore fixed point theorems, used before, can no longer be employed.

We consider here three cases of pointwise density dependence in the McKean type equation:
1. The case with additive noise.
2.  The case of smooth diffusion and drift coefficients.
3.  The power-like diffusion term without drift.

\medskip
{\bf 1. The case with additive noise}\\
In \cite{coppoletta,Oelschlaeger} the authors consider the  particle system
\begin{equation}\label{oel_coppoletta_particles}
       \left\{
\begin{array}{l}
dX^{i,N}_t=b(X^{i,N}_t,V^N*\mu_t^N(X^{i,N}_t))dt+dW^i_t,\quad 1\leq i\leq N,\quad 0<t\leq T,\\
X^{i,N}_0=\xi^i\sim \nu,
\end{array}
\right.
\end{equation}
where $b:\R^d\times \R\to\R^d$ is bounded, Lipschitz continuous and
the map $(x,p) \mapsto p b(x,p)$ is Lipschitz,
$W^i_t, 1 \le i \le N,$ are independent $\R^d$-Brownian motions on some probability space $(\Omega, \mathcal F, \P)$, $\xi^i,1 \le i \le N,$  are random variables independent of the $W^i_t$ and $\mu_t^N = \frac{1}{N} \sum_{i=j}^N \delta_{X^{j, N}_t}$ is the empirical measure. 
A smooth radially symmetric probability density $V$ on $\R^d$ is used to define a sequence of mollifiers
\begin{equation}\label{eq:V_N}
  V^N(x):= \chi_N^d V(\chi_N x),
  \end{equation}
which converges to the Dirac measure
when the smoothing parameter $\frac{1}{\chi_N}$ converges to zero.
The parameters $\frac{1}{\chi_N}$ and $N$ are chosen fulfilling the
trade-off relation $\chi_N = N^{\frac{\beta}d}, 0 <  \beta <1$. 
Consequently 
\begin{equation*}
     V^N*\mu_t^N(x)=\frac{N^{\be}}{N}\sum_{j=1}^N V(N^{\frac{\be}{d}}(x-X^{j,N}_t)),\quad \be\in(0,1),
   \end{equation*}
which is a mollified version of the empirical measure $\mu_t^N$.
The authors proved in \cite{Oelschlaeger,coppoletta} that 
the limit dynamics as $N\to \infty$ of a single particle follows
the McKean equation 
\begin{equation}\label{oel_coppoletta_eq}
       \left\{
\begin{array}{l}
dX_t=b(X_t, v(t,X_t))dt+dW_t\\
\mathcal L^\P_{X_t}(dx) = v(t,x) dx \\
X_0 \sim \nu,
\end{array}
\right.
\end{equation}
where $W$ is a Brownian motion on $(\Omega, \mathcal F, \P)$.
A basic tool for proving previous convergence in  \cite{Oelschlaeger} or even \cite{coppoletta}
was the tightness of the laws of the processes
\eqref{oel_coppoletta_particles}.
The fact that the diffusion is constant simplifies the verification
of Kolmogorov-Centsov theorem, see e.g. Theorem 2.2.8 of \cite{karatzasShreve}.

In the sequel we will also say that the couple $(X,v)$
is a solution of the McKean SDE \eqref{oel_coppoletta_eq}.
Weak well-posedness  to the McKean equation \eqref{oel_coppoletta_eq}  is equivalent to 
solving the  following martingale problem on the canonical space:
find a probability measure $\mathbb Q \in \mathcal P(\Omega_T)$ such that 
 for all $f\in C_b^2(\R^d)$ and for all $0<t\leq T$,
\begin{equation}\label{MP_coppoletta}
    f(Z_t)-f(Z_0)-\int_0^t \left( b(Z_s,v(s,Z_s))\nabla f(Z_s)+\frac{1}{2}\Delta f(Z_s)\right)ds
\end{equation}
is a $\mathbb Q$-martingale, where $(Z_t)$ is the canonical process on $\Omega_T$,
$v(s,\cdot)$ is the marginal law density of $\Q$ at time $s\in[0,T]$, and the initial condition satisfies $ v(0,x) dx=\nu(dx)$.
In  \cite{coppoletta} it is   established
   that
   $$ \E\left[ \int_{[\varepsilon,T]\times \R^d} |V^N*\mu_t^N(x)- v(t,x))|^2dt dx \right]  \rightarrow 0,$$
   for every $\varepsilon > 0$. Moreover the authors also prove the asymptotic independence of the particles which constitutes the so called {\it propagation of chaos}. 
  This shows in particular existence of \eqref{oel_coppoletta_eq}.
On the other hand, as in Section \ref{sec:sznitman}, an easy application of It\^o's formula shows that the density $v$ verifies
\begin{equation}\label{weak_PDE_oel}
    \langle v(t), f\rangle-\langle \nu,f \rangle=\int_0^t\langle v(s),b(\cdot,v(s,\cdot))\nabla f+\frac{1}{2}\Delta f\rangle ds,\quad t\leq T,\quad\forall f\in C_b^2(\R^d),
  \end{equation}
  where the notation $\langle \cdot, \cdot \rangle$ denotes the dual pairing  in $L^2(\R^d)$.
Consequently $v$
fulfills, in the sense of distributions, the non-linear Fokker-Planck equation 
\begin{equation}\label{nonlin_FP}
    \partial_tv(t,x)=\frac{1}{2}\Delta v(t,x)-\textrm{div}(v(t,x)b(x,v(t,x))), \quad v_0=\nu\in\mathcal{P}(\R^d),
\end{equation}
where non-linearity arises from the fact that the coefficient  $b$ depends itself on the unknown $v(t,x)$. Such 
$v$ is also called {\it weak solution} of the aforementioned PDE.
  
  Theorem 1.4 of \cite{coppoletta}  establishes uniqueness for (\ref{oel_coppoletta_eq}) as follows.
First it is proved that
  given any solution $(Y,v)$, $v$ is a weak solution of \eqref{nonlin_FP}.
In  \cite{Oelschlaeger}  the author focuses on the existence and uniqueness of such   weak solution.
On the other hand, in  Proposition 1.5 in \cite{coppoletta}, the authors  prove well-posedness of a mild solution of \eqref{nonlin_FP}, which means that $v$ satisfies
\begin{equation}\label{mild_PDE_meleard}
    v(t)=P_t\nu-\int_0^t P_{t-s}\textrm{div}(v(s,\cdot)b(\cdot,v(s,\cdot)))ds,  t \ge 0,
\end{equation}
where $(P_t)$ is the heat semigroup. 
The two formulations are  equivalent, even in more general and irregular cases, 
see Proposition \ref{lemma:WeakMild} in Section \ref{sc:sing_McK}.
In this way $v$ is determined: the uniqueness of $X$ follows by classical Stroock-Varadhan arguments for martingale problems associated with classical SDEs.

{\bf 2.  The case of smooth diffusion and drift coefficients.}\\
A  more delicate case, treated in \cite{JourMeleard}, is when the model presents a non-constant diffusion coefficient $\sigma$. In this case, 
for instance the proof tightness cannot be proven using
the  tools of \cite{Oelschlaeger} or \cite{coppoletta}.

The authors studied the  system of particles
\begin{equation}
    \left\{
    \begin{array}{l}
         dX^{i,N}_t= b(V^N*\mu_t^N(X_t^{i,N}))dt+\sigma(V^N*\mu_t^N(X_t^{i,N}))dW_t^i\quad 1\leq i\leq N, \\
         X_0^i= \xi^i\sim v_0(x)dx,
    \end{array}
    \right.
\end{equation}
where $b:\R\to\R^d$ is $C^2$, 
Lipschitz 
and $\sigma:\R\to\R^d$ is $C^3$, 
Lipschitz,
such that $a:=\sigma \sigma^\top$ is non-degenerate,
$\xi^i, 1 \le i \le N,$ are i.i.d. random variables with smooth law density $v_0\in \shc^{2+\al}$,
see Section \ref{sec:top-prelim} for the definition of the space.
This leads to the study of the  McKean SDE
\begin{equation}\label{jourdain_eq}
    \left\{
    \begin{array}{l}
       dX_t= b(v(t,X_t))dt+\sigma(v(t,X_t))dW_t\\
       v\in C_b^{1,2}([0,T]\times\R^d) \text{ is the law density of }X_t\\
       X_0\sim v_0(x)dx,
    \end{array}
    \right.
\end{equation}
and its associated Fokker-Planck PDE 
\begin{equation}\label{PDE_jourdain}
\left\{
\begin{array}{l}
    \partial_t v=L_v^{\star}v\\
     v(0,\cdot)=v_0(\cdot),
\end{array}
\right.
\end{equation}
where, given a generic
function $u:[0,T]\times \R^d \rightarrow \R$,
$L_u^{\star}$ is the adjoint operator of $L_u$, where
\begin{equation*}
    L_u  =\frac{1}{2}\sum_{i,j=1}^d a_{ij}(u(t,x))\partial_{ij} +\sum_{i=1}^d b_i(u(t,x))  \partial_i .
\end{equation*}

In this case we want to focus on the strategy pursued by the authors to establish well-posedness of \eqref{jourdain_eq}. This can be summed up as follows, for details see Proposition 1.1-1.3 in \cite{JourMeleard}.
First, one studies the existence of a solution to the non-linear Fokker-Planck PDE given in Equation \eqref{PDE_jourdain}. As shown in~\cite{JourMeleard}, the existence of such a solution follows from classical results, see also~\cite{ladyzhenskaya}. Moreover, the solution $\tilde{v}$ of \eqref{PDE_jourdain} can be proved to be
a classical solution, belonging to $C_b^{1,2}([0,T]\times\R^d)$.
From equation~\eqref{jourdain_eq}, one can recover a classical SDE, by substituting $\tilde{v}$ into the drift and diffusion coefficients, as detailed in Proposition 1.1 in \cite{JourMeleard}. The resulting SDE is
\begin{equation}\label{eq:lin_SDE_jour}
\left\{
    \begin{array}{l}
       dX_t= b(\tilde v(t,X_t))dt+\sigma(\tilde v(t,X_t))dW_t\\
       \tilde v\in C_b^{1,2}([0,T]\times\R^d) \\
       X_0\sim v_0(x)dx,
    \end{array}
\right.
\end{equation}
and it is strongly well-posed because both $b \circ \tilde{v}$ and $\sigma \circ \tilde{v}$ are Lipschitz continuous.
The solution $X$ of this classical SDE admits a law density $v$, as established in Chapter 6 in \cite{friedman} and Theorem 2.3.1 in~\cite{nualart}. By applying Itô's formula, one finds that this density $v$ satisfies the linear Fokker-Planck PDE
\begin{equation}\label{eq:linPDE_jour}
\left\{
\begin{array}{l}
\partial_t v = L^{\star}_{\tilde{v}} v, \\
v(0,\cdot) = v_0(\cdot),
\end{array}
\right.
\end{equation}
in the sense of distributions.
According to Chapter 6 in \cite{friedman}, the solution $v$ to equation~\eqref{eq:linPDE_jour} can be represented by integrating against a fundamental solution. Furthermore, as discussed in Section IV.14 \cite{ladyzhenskaya}, this fundamental solution is sufficiently smooth, which ensures that $v$ is smooth as well. Since $\tilde{v}$ solves the original non-linear Fokker-Planck equation~\eqref{PDE_jourdain}, it also solves the linear equation~\eqref{eq:linPDE_jour}. By the uniqueness of classical solutions to~\eqref{eq:linPDE_jour}, guaranteed by maximum principle arguments, it follows that $v = \tilde{v}$. Therefore, $\tilde{v}$ represents the marginal law densities of the solution process $X$ of equation~\eqref{eq:lin_SDE_jour}, and existence of a solution to the McKean SDE~\eqref{jourdain_eq} is thereby established.

Regarding the proof of uniqueness,
the authors employ the more standard technique,
already pointed out in case {\bf 1.\ }of the present section.
Assume there are two solutions $(X^1,v^1)$ and $(X^2,v^2)$ to~\eqref{jourdain_eq}. Applying Itô’s formula to functions $f(X^1)$ and $f(X^2)$ with $f \in C_b^2$ shows that both $v^1$ and $v^2$ satisfy the non-linear Fokker-Planck equation~\eqref{PDE_jourdain} in the distributional sense, and they are also smooth. Since this PDE admits a unique solution, we conclude that $v^1 = v^2$, and hence the McKean SDE~\eqref{jourdain_eq} reduces to a standard SDE with Lipschitz coefficients, completing the uniqueness argument.

{\bf 3.  The power-like diffusion term without drift.}\\
The third case corresponds to the 1-dimensional case  when the Fokker-Planck
  type equation is
the so called porous media equation,
given by the Cauchy problem
\begin{equation}\label{porous_media_PDE}
\left\{
\begin{array}{l}
     \partial_t v=\frac{1}{2}\partial_{xx}(v^{2m+1})  \\
  v(0,dx)=\mu(dx),
\end{array} 
\right.
\end{equation}
where $m >0$ and $\mu$ is a given Borel probability measure.
We mention \cite{Ben_Vallois} as one of the first articles to have
introduced the porous media equation
in relation to the solution $X$ of the McKean equation
\begin{equation} \label{eq:BenVallois}
\left\{
    \begin{array}{l}
        X_t=X_0+\int_0^tv^m(s,X_s)dW_s\\
      v(t,\cdot )\text{ is the law density of }X_t \\
        v(0,dx)=\mu(dx).
    \end{array}
    \right.
  \end{equation}
  Particular solutions of \eqref{eq:BenVallois} are recovered by providing
  particular solutions of \eqref{porous_media_PDE}.
  In \cite{Ben_Vallois}   the marginal law densities
  are analytically expressed as the so called  \emph{Barenblatt-Pattle} functions  
\begin{equation} \label{eq:Barenblatt}
    v(t,x):=\frac{1}{t^{\be}}\bigg(a_m-\frac{m}{(2m+1)(m+1))}\frac{x^2}{t^{2\be}}\bigg)^{1/2m}_+,\quad \be=\frac{1}{2m+2},
\end{equation}
where $a_m$ is a constant depending only of $m$
and $x_+:=\max\{x,0\}$.
Those functions solve \eqref{porous_media_PDE} with initial condition $\mu (dx)= \delta_0(dx)$
so that \eqref{eq:BenVallois} is solved for that $\mu$.
For this case, the authors prove
existence and  uniqueness in law, see  Theorem II.1.
Proposition III.1 establishes 
strong existence and pathwise uniqueness if the initial data $\mu$  is a strictly positive probability density, which is differentiable  with bounded derivative.
Theorem III.3 extends previous well-posedness result of Theorem II.1  to the case when $\mu$ is not necessarily strictly positive  and the derivative
is not necessarily bounded, fulfilling some further technical assumptions.
In this case the solution is strictly positive at each time and the paths are H\"older continuous
with parameter smaller than $\frac{1}{2}$.
In general, as for previous Fokker-Planck PDEs, a crucial step at some point should be the uniqueness of
some associated linearised Fokker-Planck one. This point seems to be missing, but this gap is
certainly filled in Chapter 6.\ of \cite{profeta}.

\subsection{About non-conservative Fokker-Planck PDEs and related McKean SDEs}
\label{sec:non-con}

Some work has been done also for so called non-conservative
Fokker-Planck PDEs and their probabilistic representation.
They constitute a natural extension of the conservative ones.
 One typical motivation is the following:
let us consider a  non-linear Kolmogorov type PDE
  with terminal condition,  such as Hamilton-Jacobi-Bellman equations,
  when its solution is integrable. The time-reversal of that
PDE 
 can be seen as a Fokker-Planck PDE perturbed by a zero-order term of the
type 
\begin{equation}
\label{epdeIntro0}
\left \{
\begin{array}{l}
\partial_t v = \frac{1}{2} \displaystyle{\sum_{i,j=1}^d} \partial_{ij}^2 \left( (\sigma \sigma^\top)_{i,j}(t,x,v) v \right) - {\rm div} \left( b(t,x,v, \nabla v) v \right)
 + \Lambda(t,x,v,\nabla v) v\ , \quad \textrm{for}\  t\in (0,T]\ ,\\
v(0, \cdot) = \nu,
\end{array}
\right .
\end{equation}
where $\nu$ is a Borel probability measure,
 $\sigma: [0,T] \times \R^d \times \R \rightarrow  \R^{d\times m}$, 
$b: [0,T] \times \R^d \times \R \rightarrow  \R^d$,
$\Lambda: [0,T] \times \R^d \times \R \times \R^d \rightarrow \R$ and $\nabla$  denotes the gradient operator. 
When $\nu$ admits a density, we denote it by $v_0$.
The unknown function $v:(0,T] \times \R^d \rightarrow \R$
is supposed to verify $v(t,\cdot) \in L^1(\R^d)$,
considered as a subspace
of the space of finite Radon measures $\shm(\R^d)$. 
The idea consists in finding a probabilistic representation via the solution of an SDE
whose coefficients do not depend only on time and the position
of the {\it particle} but also on its probability law. 
The {\it target  microscopic equation}  one has in mind is
\begin{equation}
\label{eq:MckeanExtIntro}
\left\{
\begin{array}{l}
Y_t=Y_0+\int_0^t b\Big (s,Y_s,v(s,Y_s)\Big )ds+\int_0^t \sigma\Big (s,Y_s,v(s,Y_s)\Big )dW_s\\ 
Y_0\,\sim \nu\\ 
{\displaystyle \int \varphi(x)v(t,x)dx=\E\left [\,\varphi(Y_t)\,\,\exp\Big \{\int_0^t\Lambda\big (s,Y_s,{v}(s,Y_s),\nabla{v}(s,Y_s)\big )ds\Big \}\,\right]}\ ,\quad \textrm{for}\ t\in (0,T]\ ,
\end{array}
\right . 
\end{equation}
for any continuous bounded real-valued test function $\varphi$. 
Sometimes,  the third line equation of
\eqref{eq:MckeanExtIntro},
which involves a Feynman-Kac type representation,
is denominated the {\it linking equation}.
When $\Lambda=0$ in equation~\eqref{eq:MckeanExtIntro},  the linking
equation simply says that $v(t,\cdot)$ 
coincides with the density of the marginal distribution $\mathcal{L}^\P_{Y_t}$. 
In this specific case, equation~\eqref{eq:MckeanExtIntro} reduces to a
McKean SDE,  
which (as mentioned earlier) is  an SDE whose coefficients also depend
on the marginal law $\mathcal{L}^{\P}_{Y_t}$. With more general functions $\Lambda$,
the role of the linking equation is more intricate since the whole history of the process $(Y_s)_{0\leq s\leq t}$ is involved. This fairly general type of equations can be called {\it McKean Feynman-Kac Equation} to emphasise the fact that $v(t,x)dx$ now corresponds to a
 non-conservative Feynman-Kac measure. 
Those equations have been discussed in \cite{LOR2, LOR1}.
Various theorems of existence and uniqueness have been formulated,
pathwise or in law.
We drive the attention also on the recent paper
\cite{BKRRS} for another alternative approach.

\subsection{Stochastic control and McKean type SDEs}\label{sec:stoc-contr}

We conclude the section by mentioning a research field which has known a significant expansion in the recent literature,
i.e.\ the McKean type SDEs in the framework of the so called
{\it Nash equilibrium}
in mean-field games, 
see e.g.\ \cite{lasrylions, carmona-delarueI, carmona-delarueII,
  lacker, cardaliaguet, cardaliaguetPrinceton}.
One typical such  SDE describes the dynamics of an average agent,
who minimises running and terminal costs depending not only on her position,
but also on the distribution of the the infinitely many other agents.
A typical mathematical model, that we borrow from the introduction of
 \cite{cardaliaguet},
is 
constituted by
$$ dX_t = \alpha_t dt + \sqrt 2\sigma dW_t, $$
where $W$ is a standard Brownian motion and $\sigma\in\R$.
The typical agent determines her velocity $\alpha_s$ by
solving a stochastic control problem of the type
$$ \E\left[\int_0^T L(X_s,\mu_s, \alpha_s)ds + G(X_T, \mu_T)\right],$$
  where $L$ (resp.\ $G$) is a running (resp.\ terminal) cost,
  which also depends on the law $\mu$ of the position of a typical agent.
  The optimal control will be (heuristically) of the
  form
  $ \alpha_t = \alpha^\star(t,X_t),$
  and $\alpha^\star(t,x) = - \nabla_p H(x,\mu_t,\nabla_xu(t,x))$ where $H(x,m,p)$ is
  a corresponding Hamiltonian.
  After optimisation, the dynamics of a general agent
  is described by the McKean SDE
  $$ dX_t = - \nabla_pH (X_t,\shl^{\P}_{X_t}, \nabla_x u(t,X_t)) dt + \sqrt{2} \sigma dW_t,$$
  where $u$ solves a Hamilton-Jacobi-Bellman equation of the type
  $$ \partial_t u + \sigma \Delta u = H(x,\mu_t, \nabla_x u), \ (t,x) \in [0,T] \times \R^d, u(T,x) = G(x,\mu_T).$$
  For precise details one can consult  \cite{cardaliaguet},
  which explains why previous SDE constitutes the
   Nash equilibrium of an infinite number of players
   interacting via the running and terminal costs.

\section{Tools and methodology for the study of singular SDEs}
\label{sc:Tools}

In this section we illustrate some useful analytical and probabilistic tools that can be used to construct solutions of singular SDEs not of McKean type  and investigate their existence and uniqueness.

\subsection{Figalli-Trevisan superposition principle and uniqueness issues for the PDE}
\label{sc:figalli}

One modern tool, extremely useful  to study a general McKean SDE of the type
\eqref{general_mckean_SDE} is the so called Figalli-Trevisan superposition principle.
Let us consider the linear Fokker-Planck PDE from
Equation 2 in \cite{figalli}, i.e.
\begin{equation} \label{eq:Figalli2}
\left\{
\begin{array}{l}
\partial_t \mu_t= \sum_{i=1}^d \partial_i (- b_i \mu_t)+\frac{1}{2}\sum_{i,j=1}^d\partial_{ij}(a_{ij}\mu_t) \text{ in }[0,T]\times\R^d,\\
\mu_0=\nu,
\end{array}
\right.
\end{equation}
where $\nu \in \shp(\R^d)$ is a given initial probability law and $a: [0,T] \times \R^{d\times d} \rightarrow \R^d,
  b:  [0,T] \times \R^{d} \rightarrow \R^d$. 
A measure-valued function $\mu: [0,T] \rightarrow \shp(\R^d)$ is a solution of \eqref{eq:Figalli2} (in the sense of distributions) if, for every Schwartz test function $\varphi \in \shs(\R^d)$, it holds that
\begin{equation}
  \int_{\R^d} \varphi(x)\mu_t(dx)= \int_{\R^d} \varphi(x)\nu(dx) + \int_0^t \int_{\R^d}\bigg[ \sum_{i=1}^d b_i(s,x) \partial_i \varphi(x)+\frac{1}{2}\sum_{i,j=1}^d a_{ij}(s,x)\partial_{ij}\varphi(x) \bigg]\mu_s(dx)ds\label{eq:Figalli2sol}.
  \end{equation}
  We observe that  the PDE makes sense, since it is in divergence form and all the derivatives in \eqref{eq:Figalli2sol} do not involve the coefficients
  $a$ and $ b$,
 which are free to satisfy only measurability and mild integrability properties,
  see \eqref{eq:trevisan}.

An important question consists in associating a solution
of the previous PDE with a solution to the following martingale
problem, which involves the linear differential operators
\begin{equation}\label{eq:Lt}
    L_t := \sum_{i=1}^d b_i \partial_i  +\frac{1}{2}\sum_{i,j=1}^d a_{ij}\partial_{ij}, t \in [0,T].
\end{equation}
When $L$ is time independent it is the {\it generator}
of a Markovian process. The operator
$\partial_t + L_t$
is sometimes called {\it Dynkin operator} or {\it
  Kolmogorov operator}.

\begin{definition} \label{DefMP}
Let $Z = (Z_t)_{t \in [0,T]}$  be the canonical process on
the canonical space $\Omega_T = \shc([0,T],\R^d)$.
  A Borel probability measure $\P$
  is a  solution to the martingale problem related to 
  $(a,b)$
  and initial law $\nu\in \mathcal P(\R^d)$
 if the following holds:
 \begin{itemize}
 \item $Z_0 \sim \nu$ under $\P$;
 \item for every $\varphi\in C^2_b(\R^d)$
\begin{equation} \label{FigalliMP}
  \varphi(Z_t) - \varphi(Z_0) -\int_0^t(L_s\varphi)(s,Z_s)ds
  \end{equation}
  is a $\P$-local martingale.
  \end{itemize}
  For shortness, we denote by MP$(a,b, \nu)$ the martingale problem related
  to the coefficients $(a,b)$ and with initial condition $\nu$. 
  \end{definition}
  It is well-known that
  this martingale problem is equivalent to
  $Z$ being a (weak)  solution in law under $\P$ to the SDE
\begin{equation}
\begin{cases}
dZ_t=b(t,Z_t)dt+\sigma(t,Z_t)dW_t,\\
Z_0\sim\nu,
\end{cases}
 \end{equation}
 where $a(t,x) = \sigma\sigma^{\top}(t,x)$.

  Historically, the techniques employed to solve a McKean SDE
  (with pointwise dependence)
  were based essentially 
on the uniqueness of the
Fokker-Planck PDE \eqref{eq:Figalli2}.
 Nowadays 
 \emph{Figalli-Trevisan superposition principle}
has become a very efficient tool.
 This was introduced by A.\ Figalli in Theorem 2.6 in \cite{figalli}, 
 in the case of bounded coefficients and later extended in
 Theorem 2.5 of \cite{Trevisan}
 and finally generalised to the following case in
 \cite{bogachev_superposition}.
It was  inspired
 by the Ambrosio superposition principle obtained for the continuity equation
  (first order), see e.g.\ Theorem 12 of \cite{Ambrosio2004}.

  \begin{theorem}[Theorem 1.1 in \cite{bogachev_superposition}]
    \label{thm:figalli}
  Let $\mu: [0,T] \rightarrow \shp(\R^d)$ be a  solution of the PDE in \eqref{eq:Figalli2} which is  continuous with respect to the weak-star topology and 
  such that
\begin{equation} \label{eq:trevisan}
  \int_0^T \int_{\R^d}
  \frac{(\vert b(t,x) \vert + \vert a(t,x) \vert)}{1+\vert x\vert^2} \mu_t(dx) dt < +\infty ,
  \end{equation}
  Then there exists a Borel probability $\P$ on the canonical space $\Omega_T$, 
  which is the law of the solution of the martingale
  problem related to
  $(\sigma \sigma^\top,b)$ and initial condition $\nu$,  see Definition \ref{DefMP},
  such that  
\begin{equation} \label{eq:identification}
\int_{\R^d}\varphi(x) \mu_t(dx)= \E^\P[\varphi(Z_t)].
\end{equation}
for every bounded continuous function $\varphi: \R^d \rightarrow \R, t \in [0,T].$
\end{theorem}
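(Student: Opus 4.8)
The plan is to construct $\P$ as a weak limit of laws of classical diffusions built from regularised coefficients, adapting the compactness scheme used by Ambrosio for the continuity equation and by Figalli and Trevisan in the present second-order, possibly degenerate, setting; the argument has three parts: regularisation, tightness, and the passage to the limit in the martingale problem. \emph{Regularisation.} Fix a smooth strictly positive mollifier $\rho_\varepsilon$ on $\R^d$ and set $\mu^\varepsilon_t:=\mu_t*\rho_\varepsilon$, which is for each $t$ a smooth strictly positive density and is still weak-star continuous in $t$. Convolving the distributional identity \eqref{eq:Figalli2sol} against $\rho_\varepsilon$ shows that $\mu^\varepsilon$ solves \emph{exactly} the Fokker-Planck equation with coefficients
\[
b^\varepsilon_i:=\frac{(b_i\,\mu_t)*\rho_\varepsilon}{\mu^\varepsilon_t},\qquad a^\varepsilon_{ij}:=\frac{(a_{ij}\,\mu_t)*\rho_\varepsilon}{\mu^\varepsilon_t}\,,
\]
which are smooth in $x$. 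Adding $\varepsilon I_d$ to $a^\varepsilon$ and correspondingly $-\tfrac{\varepsilon}{2}\nabla\log\mu^\varepsilon$ to $b^\varepsilon$ --- a change that keeps $\mu^\varepsilon$ a solution while making the diffusion uniformly non-degenerate --- puts us in the classical regime: the regularised SDE has a unique strong solution, its law $\P^\varepsilon$ on $\Omega_T$ (with $Z_0\sim\nu*\rho_\varepsilon$) solves the associated martingale problem, and by It\^o's formula its time marginals solve the regularised Fokker-Planck equation; since the latter is linear with non-degenerate coefficients it has a unique measure solution, so those marginals are precisely $\mu^\varepsilon_t$, and $\mu^\varepsilon_t\to\mu_t$ weak-star as $\varepsilon\to0$.

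\emph{Tightness.} I would next show that $\{\P^\varepsilon\}_\varepsilon$ is tight in $\mathcal P(C([0,T],\R^d))$. Here the bound \eqref{eq:trevisan} plays the role of a Khasminskii--Lyapunov condition: applying the generator to $V(x):=\log(1+|x|^2)$ produces terms dominated by $(|b(t,x)|+|a(t,x)|)/(1+|x|^2)$ integrated against $\mu^\varepsilon_t\,dt$, which is bounded uniformly in $\varepsilon$. This yields a uniform bound on $\E^{\P^\varepsilon}\big[\sup_{t\le T}V(Z_t)\big]$, hence tightness of the one-time marginals, and together with an Aldous-type modulus-of-continuity estimate --- obtained from the It\^o decomposition of $Z$ under $\P^\varepsilon$ after a stopping-time truncation on $\{|Z|\le R\}$ to localise the integrability weight --- tightness on path space. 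Passing to a subsequence, $\P^\varepsilon\rightharpoonup\P$; since the marginals of $\P^\varepsilon$ converge weak-star to $\mu_t$, the marginals of $\P$ are the $\mu_t$ and $Z_0\sim\nu$ under $\P$, so the identity \eqref{eq:identification} already holds.

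\emph{Passing to the limit --- the main obstacle.} It remains to prove that $\P$ solves $\mathrm{MP}(a,b,\nu)$, i.e.\ that $M^\varphi_t:=\varphi(Z_t)-\varphi(Z_0)-\int_0^t L_s\varphi(Z_s)\,ds$, $t\le T$, is a $\P$-local martingale for $\varphi\in C^2_b(\R^d)$. Under $\P^\varepsilon$ the analogue $M^{\varphi,\varepsilon}$ (built from $L^\varepsilon$ together with the extra $\varepsilon$-correction) is a martingale, but the limit is not automatic: the functional $\omega\mapsto\int_s^t(L_u\varphi)(\omega_u)\,du$ is \emph{not} continuous on $C([0,T],\R^d)$ since $b,a$ are merely measurable, and $b^\varepsilon,a^\varepsilon$ converge to $b,a$ only in $L^1_{\mathrm{loc}}$ with respect to $\mu$. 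The way around is to exploit that the marginals of $\P^\varepsilon$ are exactly $\mu^\varepsilon_t$: the error $\E^{\P^\varepsilon}\big[\int_s^t|(L^\varepsilon_u\varphi-L_u\varphi)(Z_u)|\,du\big]$ is an explicit space--time integral against $\mu^\varepsilon_u\,du$ that can be estimated purely analytically --- by DiPerna--Lions commutator estimates together with \eqref{eq:trevisan} --- and shown to vanish with $\varepsilon$; and since $M^\varphi$ is then a \emph{fixed} functional, the weak convergence $\P^\varepsilon\rightharpoonup\P$, upgraded to convergence of the relevant expectations by the uniform integrability furnished by \eqref{eq:trevisan}, passes the martingale identity (tested against bounded continuous $\mathcal F_s$-measurable variables) to the limit. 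The reduction from general $\varphi\in C^2_b$ to compactly supported $\varphi$ and the passage from martingale to local martingale are then routine localisations. The genuinely delicate point of the whole proof is precisely this last step --- reconciling the weak convergence of the path measures $\P^\varepsilon$ with the fact that the coefficients converge only in $L^1_{\mathrm{loc}}$ against $\mu$ when evaluating the discontinuous drift functional --- which is exactly where the structural hypothesis \eqref{eq:trevisan} is indispensable.
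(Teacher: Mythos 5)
The paper does not prove this theorem: it is quoted verbatim from \cite{bogachev_superposition} (building on \cite{figalli, Trevisan}) and used as a black box, so your proposal can only be measured against the proofs in that literature. Your three-step scheme (mollification of $\mu_t$, tightness, limit in the martingale problem) is indeed the correct skeleton, and several ingredients are right: the identity $b^\varepsilon\mu^\varepsilon=(b\,\mu)*\rho_\varepsilon$ making $\mu^\varepsilon$ an exact solution, the use of $V=\log(1+|x|^2)$ to exploit \eqref{eq:trevisan}, and the recognition that the limit of the drift functional is the delicate point.

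The genuine gap is in the regularisation step, precisely where the hypothesis \eqref{eq:trevisan} bites. Under this weighted integrability the mollified coefficients $b^\varepsilon=((b\,\mu_t)*\rho_\varepsilon)/\mu^\varepsilon_t$ and $a^\varepsilon$ are smooth in $x$ but \emph{unbounded} (they inherit boundedness only when $b,a$ are bounded, which is Figalli's original setting), and the extra drift $\tfrac{\varepsilon}{2}\nabla\log\mu^\varepsilon$ (note the sign: it must be $+$, not $-$, to compensate $\tfrac{\varepsilon}{2}\Delta\mu^\varepsilon$) blows up where $\mu^\varepsilon$ is small. So the assertion that one is ``in the classical regime'' fails twice: the regularised SDE need not have a non-exploding unique solution, and — more importantly — uniqueness of the regularised Fokker--Planck equation in the class of probability-measure solutions, which is the \emph{only} thing that identifies the marginals of $\P^\varepsilon$ with $\mu^\varepsilon_t$ and hence makes \eqref{eq:identification} survive the limit, is not a classical fact for smooth unbounded coefficients. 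Handling exactly this (by truncation/localisation and a careful reduction to the bounded case in \cite{Trevisan}, and by a further transformation to pass from $L^1(\mu_t\,dt)$-integrable to merely $(1+|x|^2)^{-1}$-weighted coefficients in \cite{bogachev_superposition}) is the substance of those papers; your sketch assumes it away. Two smaller points: the tightness argument needs, besides the bound on $\E^{\P^\varepsilon}[\sup_t V(Z_t)]$, the uniform-in-$\varepsilon$ domination $\int\frac{|b^\varepsilon_t|+|a^\varepsilon_t|}{1+|x|^2}\,d\mu^\varepsilon_t\le C\int\frac{|b_t|+|a_t|}{1+|x|^2}\,d\mu_t$ to get an Aldous modulus, which you should state; and the limit passage does not use DiPerna--Lions commutator estimates but rather the exact convolution identity above together with approximation of $b,a$ by continuous functions in the weighted $L^1(\mu_t\,dt)$ norm.
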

\begin{remark} \label{rmk:fig}
  \begin{enumerate}
    \item
  It is obvious that if $a$ and $b$ are bounded then
any
measurable map $\mu:[0,T] \rightarrow \shp(\R^d)$  fulfills \eqref{eq:trevisan}.
\item
There exists a recent generalisation of Figalli-Trevisan superposition principle for non-local operators, in particular for Lévy measure driven operators, see e.g.\ \cite{nonloc_SP_Roeckner}.
  \end{enumerate}
\end{remark}

\medskip
Although the uniqueness of the Fokker-Planck equation is not required for the superposition principle, it remains a significant issue for Fokker-Planck type PDEs.
One consequence of the  uniqueness of the martingale problem
and previous superposition principle of Theorem \ref{thm:figalli}
is uniqueness of the Fokker-Planck equation \eqref{eq:Figalli2} a posteriori.
\begin{corollary} \label{cor:figalli-uniqueness} 
  If the martingale problem introduced
  in Definition \ref{DefMP} admits existence and uniqueness
then the PDE \eqref{eq:Figalli2}
has at most one solution among the
probability valued functions.
 \end{corollary}

As far as uniqueness of \eqref{eq:Figalli2} is concerned,
the literature includes various analytical a priori results addressing this question.
In the case of smooth coefficients we refer the reader to Theorem 3.2.6 in \cite{stroock_varadhan}, while, for example, for the case of drift-less PDE with bounded and measurable, but possibly degenerated,
 diffusion coefficient,  we state the theorem below from \cite{BCR2}. 

 \begin{theorem}[Theorem 3.1 of \cite{BCR2}]\label{P3.5}
Let $a$ be a Borel non negative bounded  function
on $[0,T] \times \R^d$. Let  $\mu^i : [0,T] \rightarrow \shm_+(\R^d)$,
$i = 1,2$,  be continuous 
with respect to the weak-star topology on $\shm(\R^d)$.
Let $\nu$ be an element of the space of finite and positive Radon measures $\shm_+(\R^d)$. 
Suppose that both $\mu^1$ and $\mu^2$ solve  the problem 
$\partial_t \mu = \Delta (a \mu) $
in the sense of distributions  with given initial condition
$\nu \in  \shm_+(\R^d)$.
More precisely, they solve
\begin{equation} \label{E3.5a}
\int_\R  \varphi(x) \mu_t(dx) = \int_\R 
 \varphi(x)  \nu(dx) + \int_0^t \int_\R \Delta \varphi(x)  a(s,x) \mu_s(dx)ds,
\end{equation}
for every $t \in [0,T]$ 
and any $\varphi \in \shs (\R^d) $.
If moreover 
 $\mu:= \mu^1 - \mu^2$ admits a density in 
 $ L^2([0,T] \times \R^d)$,
then $\mu_t^1 -  \mu_t^2$ is identically  zero for almost
 every $t$. 
\end{theorem}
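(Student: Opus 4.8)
The plan is to subtract the two equations, integrate once in time, and close an $H^1$-type energy estimate for the resulting primitive — in the spirit of the Brezis--Crandall uniqueness argument for porous-medium equations, adapted to the ``double divergence'' form $\partial_t\mu=\Delta(a\mu)$. First I would set $\mu:=\mu^1-\mu^2$; subtracting the two instances of \eqref{E3.5a} and using $\mu_0=\nu-\nu=0$, the signed measure $\mu$ solves $\partial_t\mu=\Delta(a\mu)$ in the distributional sense with vanishing initial datum, and by hypothesis $\mu_t=w(t,\cdot)\,dx$ with $w\in L^2([0,T]\times\R^d)$; since $a$ is bounded, also $aw\in L^2([0,T]\times\R^d)$. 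It then suffices to show $w\equiv 0$.

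The key move is to introduce the primitive $Z(t,\cdot):=\int_0^t (aw)(s,\cdot)\,ds$, read as a Bochner integral in $L^2(\R^d)$, so that $Z\in C([0,T];L^2(\R^d))$, $Z(0)=0$ and $\partial_t Z=aw$ a.e. Testing \eqref{E3.5a} for $\mu$ against $\varphi\in\shs(\R^d)$ and using Fubini gives, for every $t$, $\langle w(t),\varphi\rangle=\langle Z(t),\Delta\varphi\rangle$, i.e.\ $w(t)=\Delta Z(t)$ in $\shs'(\R^d)$. Since $Z(t)\in L^2$ and $\Delta Z(t)=w(t)\in L^2$, elliptic regularity on $\R^d$ (immediate on the Fourier side) yields $Z(t)\in H^2(\R^d)$ with $\|Z(t)\|_{H^2}\lesssim\|Z(t)\|_{L^2}+\|w(t)\|_{L^2}$; integrating in $t$ then shows $Z\in L^2([0,T];H^2)\cap H^1([0,T];L^2)$ and $\partial_t Z=aw=a\Delta Z$. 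On this regularity class $t\mapsto\|\nabla Z(t)\|_{L^2}^2$ is absolutely continuous and, for a.e.\ $t$,
\[
\tfrac{d}{dt}\,\tfrac12\|\nabla Z(t)\|_{L^2}^2=\langle\nabla\partial_t Z(t),\nabla Z(t)\rangle=-\langle\partial_t Z(t),\Delta Z(t)\rangle=-\int_{\R^d} a(t,x)\,w(t,x)^2\,dx\le 0 ,
\]
because $a\ge 0$. Integrating from $0$ to $t$ with $\nabla Z(0)=0$ forces $\|\nabla Z(t)\|_{L^2}^2=-2\int_0^t\!\int_{\R^d} aw^2\le 0$, hence $\|\nabla Z(t)\|_{L^2}=0$; so $Z(t,\cdot)$ is a.e.\ constant, and being in $L^2(\R^d)$ it vanishes, whence $w(t)=\Delta Z(t)=0$ for (a.e.) $t$, which is the claim.

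The hard part is making the last two steps rigorous. First, one must justify that $Z$ genuinely lies in $L^2_tH^2_x\cap H^1_tL^2_x$: this is exactly where the $L^2$-density hypothesis on the \emph{difference} enters, since via $\Delta Z=w$ it supplies the spatial regularity that degeneracy of $a$ would otherwise block. Second, the integration-by-parts identity $\langle\nabla\partial_t Z,\nabla Z\rangle=-\langle\partial_t Z,\Delta Z\rangle$ needs care, because $\partial_t Z=aw$ is only $L^2$ in space so $\nabla\partial_t Z$ is merely $H^{-1}$-valued; I would obtain it from the standard Lions--Magenes lemma on the time-derivative of $\|u(t)\|^2$ for $u\in L^2([0,T];H^1)$ with $\partial_t u\in L^2([0,T];H^{-1})$, applied to $u=\nabla Z$. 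An alternative that avoids the primitive is to mollify the equation in space and show that the Friedrichs-type commutator $(aw)*\rho_\varepsilon-a\,(w*\rho_\varepsilon)\to 0$ in $L^2$, but the time-integrated formulation above is cleaner. I do not expect any obstruction beyond this bookkeeping: the degeneracy of $a$ is harmless here precisely because $a$ appears only through the good, signed term $-\int aw^2$. (Note that the superposition principle of Theorem~\ref{thm:figalli} cannot substitute for this argument, since the associated SDE $dX=\sqrt{2a}(t,X)\,dW$ need not have a unique weak solution for degenerate $a$ — so the extra $L^2$ hypothesis is genuinely used.)
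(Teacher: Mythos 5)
Your argument is correct and is essentially the proof given in \cite{BCR2}: there too one introduces the time primitive $Z(t)=\int_0^t (a w)(s,\cdot)\,ds$ of the difference $w=\mu^1-\mu^2$, reads off $w(t)=\Delta Z(t)$ from the weak formulation, and derives the identity $\tfrac12\|\nabla Z(t)\|_{L^2}^2+\int_0^t\int_{\R^d}a\,w^2\,dx\,ds=0$, whose two nonnegative terms must both vanish. The only cosmetic difference lies in how the chain rule for $t\mapsto\|\nabla Z(t)\|_{L^2}^2$ is justified (the reference performs the corresponding computation on the Fourier side rather than invoking the Lions--Magenes lemma for $u=\nabla Z\in L^2_tH^1_x$ with $\partial_t u\in L^2_tH^{-1}_x$), and your closing observation that the degeneracy of $a$ is harmless because it only enters through the signed term $-\int a w^2$ is exactly the point of the $L^2$-density hypothesis.
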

\begin{remark}[Non-uniqueness of Fokker-Planck equation] \label{RMistake1}
  In the possibly degenerate case, as we said before, uniqueness does not hold in general in the class
  of measure-valued functions $\mu:[0,T] \mapsto \shp(\R^d)$.
  Remark 3.11 of \cite{BRR}  illustrates
  a case of function $a: \R \rightarrow \R$ such that
  the Fokker-Planck PDE \eqref{E3.5a} does not admit
  uniqueness, with initial condition $\nu= \delta_{0}$.
  This happens for $ a \ge 0$ such that $a(0) = 0$,
  $a$ is strictly positive on $\R\setminus\{0\}$ and
  $\frac{1}{a}$ is integrable in a neighborhood of zero.
  One solution is $\mu^1 \equiv \delta_0$, the second
  one $\mu^2$ is given by the marginal laws of a process
  constructed by Engelbert-Schmidt type arguments,
  see e.g.\ \cite{engelbert}.
\end{remark}

\subsection{Methodology for the probabilistic representation}
\label{sc:PR}

We recall that one of the targets of this paper 
consists in discussing different techniques
for the resolution of a McKean SDE of the type
\eqref{general_mckean_SDE}.
Since that equation involves a dependence of pointwise type,
the Wasserstein-type probabilistic methods fail.
In this case the well-posedness of  \eqref{general_mckean_SDE}
is generally performed
by studying the PDE  \eqref{general_nonlinear_FP}
via the so called {\it probabilistic representation}.

At the methodological level, to simplify, we suppose the coefficients
to be bounded and measurable, even though
in the literature one can allow some growth:
for instance \cite{BarbuRoeckSuperposition} shows existence
for a large class of SDEs of the type
\eqref{general_mckean_SDE} and
\cite{BarbuRoeckJFA} investigates uniqueness.
In the framework of those articles one
shows the existence of the corresponding
  PDE \eqref{general_nonlinear_FP}.   
\begin{theorem}\label{thm:PR}
  Let us suppose $b, \sigma \sigma^\top$
  bounded and measurable and suppose
  the existence of a solution (in the sense
  of distributions)
$\bar v\in L^1([0,T]\times\R^d)$
of the PDE \eqref{general_nonlinear_FP}. Then
$(X, \bar v)$ is a solution in law  
of the McKean SDE \eqref{general_mckean_SDE}.
\end{theorem}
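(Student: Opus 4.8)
The plan is to freeze the given density $\bar v$ inside the coefficients, thereby reducing the non-linear Fokker-Planck PDE \eqref{general_nonlinear_FP} to a \emph{linear} one, then apply the Figalli-Trevisan superposition principle (Theorem \ref{thm:figalli}) to obtain a process, and finally recognise that the frozen coefficients coincide, along the law of that process, with the original McKean coefficients.

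First I would set $\hat b(t,x):=b(t,x,\bar v(t,x))$ and $\hat a(t,x):=a(t,x,\bar v(t,x))=(\sigma\sigma^\top)(t,x,\bar v(t,x))$. Since $b$ and $\sigma\sigma^\top$ are bounded and (jointly) measurable and $\bar v$ is measurable on $[0,T]\times\R^d$, the maps $\hat b$ and $\hat a$ are bounded and measurable. Writing $\mu_t(dx):=\bar v(t,x)\,dx$, the hypothesis that $\bar v$ solves \eqref{general_nonlinear_FP} in the sense of distributions is precisely the statement that $t\mapsto\mu_t$ solves the \emph{linear} Fokker-Planck equation \eqref{eq:Figalli2} with coefficients $(\hat a,\hat b)$ and initial datum $\nu$, in the integral sense \eqref{eq:Figalli2sol}. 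Before invoking the superposition principle one checks that each $\mu_t$ is a probability measure and that $t\mapsto\mu_t$ is weak-star continuous: non-negativity and conservation of the total mass $\int_{\R^d}\bar v(t,x)\,dx=\int_{\R^d}\nu(dx)=1$ follow from the divergence structure of the equation (testing \eqref{eq:Figalli2sol} against functions approximating the constant $1$), while a weak-star continuous representative in $t$ can be selected by a standard argument directly from \eqref{eq:Figalli2sol}.

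Next, since $\hat a$ and $\hat b$ are bounded, the integrability condition \eqref{eq:trevisan} holds automatically (Remark \ref{rmk:fig}), so Theorem \ref{thm:figalli} applies and produces a Borel probability $\P$ on $\Omega_T$, which is the law of a solution to the martingale problem $\mathrm{MP}(\hat a,\hat b,\nu)$ in the sense of Definition \ref{DefMP}, and which satisfies the identification \eqref{eq:identification}, i.e.
\begin{equation*}
\E^\P[\varphi(Z_t)]=\int_{\R^d}\varphi(x)\,\bar v(t,x)\,dx,\qquad t\in[0,T],
\end{equation*}
for every bounded continuous $\varphi$. By the equivalence between the martingale problem and weak solutions of SDEs recalled after Definition \ref{DefMP}, the canonical process $Z$ is, under $\P$, a weak (in law) solution of $dZ_t=\hat b(t,Z_t)\,dt+\hat\sigma(t,Z_t)\,dW_t$ with $Z_0\sim\nu$, where $\hat\sigma(t,x):=\sigma(t,x,\bar v(t,x))$ is a genuine measurable square root of $\hat a$ (so no measurable selection is needed). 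Finally, \eqref{eq:identification} says the marginal law $\shl^\P_{Z_t}$ admits the density $\bar v(t,\cdot)$; substituting this back gives $\hat b(t,Z_t)=b(t,Z_t,\bar v(t,Z_t))$ and $\hat\sigma(t,Z_t)=\sigma(t,Z_t,\bar v(t,Z_t))$, so $(Z,\P)$ — equivalently the pair $(X,\bar v)$ with $X:=Z$ — solves the McKean SDE \eqref{general_mckean_SDE}.

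I expect the main obstacle to be the verification that $t\mapsto\mu_t=\bar v(t,\cdot)\,dx$ is a weak-star continuous curve of \emph{probability} measures: the bare assumption $\bar v\in L^1([0,T]\times\R^d)$ does not by itself guarantee non-negativity, unit mass, or continuity in time, and these properties must be extracted from the equation (or incorporated into the notion of solution) before Theorem \ref{thm:figalli} can be applied. The remaining steps — the reduction to a linear PDE, the automatic check of \eqref{eq:trevisan}, and the passage from the martingale problem back to the SDE — are routine.
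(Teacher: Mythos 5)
Your proposal is correct and follows essentially the same route as the paper: freeze $\bar v$ to linearise the Fokker--Planck equation, invoke the Figalli--Trevisan superposition principle (with \eqref{eq:trevisan} automatic for bounded coefficients), pass to the SDE via the martingale-problem equivalence, and substitute the identified marginals back into the coefficients. Your added remarks on verifying non-negativity, unit mass, and weak-star continuity of $t\mapsto\bar v(t,\cdot)\,dx$ are a legitimate refinement of hypotheses the paper's proof leaves implicit.
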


\begin{proof}[Proof of Theorem \ref{thm:PR}]
  Since $\bar v\in L^1([0,T]\times\R^d)$ is  a solution  of
  \eqref{general_nonlinear_FP} in the sense of distributions then
  $v = \bar v$ solves the following associated linearised Fokker-Planck PDE  with drift $b(t,x,\bar v(t,x))$, and diffusion $\sigma(t,x,\bar v(t,x))$, where now $\bar v$ is known:
\begin{equation}\label{general_linearised_FP}
 \left\{
 \begin{array}{l}
   \partial_t v(t,x)= \frac{1}{2}\sum_{i,j=1}^d \partial_{ij}(a_{ij}(t,x,\bar v(t,x))v(t,x))-\sum_{i=1}^d  \partial_i(b_i(t,x,\bar v(t,x))v(t,x))\\
   v(0,x)=\bar v_0(x).
   \end{array}
   \right.
 \end{equation}
 Now making use of Figalli-Trevisan's superposition Principle in Theorem
 \ref{thm:figalli}, taking into account the equivalence result
 stated in Proposition 4.16 in \cite{karatzasShreve},
 there is a solution in law of
\begin{equation}
 \left\{
 \begin{array}{l}
   dX_t=b(t,x,\bar v(t,x))dt+\sigma(t,x,\bar v(t,x))dW_t\\
   {\rm The \ marginal \ law \ density \ of \ } X_t  \ {\rm is} \  v(t,\cdot)
   \\
   X_0\sim \bar v_0(x)dx.
 \end{array}
 \right.
\end{equation}
Since $v = \bar v$, 
it is clear that $(X,\bar v)$ is now a solution to \eqref{general_mckean_SDE} and this completes the proof.
\end{proof}

If the drift $b$ in \eqref{eq:trevisan} is distributional it is not clear how to formulate
a version of Theorem \ref{thm:figalli}.
A natural way to proceed consists in smoothing the drift $b$ into $b_\varepsilon$,
to make use of the superposition principle to the corresponding Fokker-Planck PDE, 
to show a tightness result of the solutions to the related SDEs and
a continuity results  of the solutions of the PDE with respect to the distribution $b$.
Details about the regularisation and the continuity results will be given Sections \ref{sec:linearSDE} and \ref{sec:McKean-point}.

\subsection{Zvonkin-type transformation}\label{ssc:zvonkin}

Following an idea of Zvonkin \cite{zvonkin},
a useful technique for treating
SDEs with distributional drift 
 is the so called Zvonkin  transformation,
which was first used in \cite{zvonkin} for studying strong existence
and pathwise uniqueness for SDEs without
Lipschitz coefficients.

Let us consider a $d$-dimensional SDE of the form
\begin{equation}\label{eq:orig_zvonkinSDE}
X_t =X_0+ \int_0^t  b(s, X_s) ds +  \int_0^t \sigma(s,X_s)dW_s,
\end{equation}
where $W$ is an $m$-dimensional Brownian motion,   $\sigma: [0,T]\times \R^{d}\to \R^m$ is continuous, bounded and strongly elliptic, but the drift $b:[0,T]\times \R^{d}\to \R^d$ is only bounded and measurable. 
Since $b$ is not Lipschitz, classical results on existence and uniqueness of the solution do not apply.
The original idea of Zvonkin \cite{zvonkin} was to find a transformation that would remove the drift,
in particular, he considered solutions of the Kolmogorov-type PDE
\begin{equation}\label{eq:orig_zvonkin}
\left\{
\begin{array}{l}
\partial_t  \phi^i+\frac{1}{2}{\rm Tr}(\sigma\sigma^{\top}\nabla^2\cdot \phi^i)+\nabla  \phi^i \cdot b = 0 \\ 
\phi^i(T, x) = x^i,
\end{array}
\right.
\end{equation}
for $i=1, \ldots, d$. Proceeding in a formal way for now, we assume $(X_t)$ to be a solution of SDE \eqref{eq:orig_zvonkinSDE} and we apply It\^o's formula to the process $\phi(t,X_t)$, where $\phi = (\phi^1, \ldots, \phi^d)$, and $\phi^i$ is a solution to the PDE in \eqref{eq:orig_zvonkin}, to get
 \begin{align*}
    \phi^i(t,X_t)-\phi^i(0,X_0)=&\int_0^t \partial_s\phi^i(s,X_s)ds+\int_0^t \nabla \phi^i(s,X_s)dX_s+\frac12 \int_0^t \nabla^2\phi^i(s,X_s)d[X,X]_s\\
    =&\int_0^t \bigg(\underbrace{ \partial_s\phi^i(s,X_s)+\frac12{\rm Tr}(\sigma\sigma^{\top}\nabla^2\phi^i(s,X_s))+\nabla \phi^i(s,X_s)\cdot b(s,X_s)}_{=0}\bigg)ds\\
    &+\int_0^t\nabla \phi^i(s,X_s)\cdot \sigma(s,X_s)dW_s\\
    =&\int_0^t\nabla \phi^i(s,X_s)\cdot \sigma(s,X_s)dW_s.
 \end{align*}
    In order to write an SDE for $Y_t:=\phi(t,X_t)$  we need to express $X$ in terms of $Y$, hence invertibility of $\phi$ is now fundamental, which was proven by
Zvonkin in Theorem 2, Section 3  of \cite{zvonkin}  on a small time interval $[0, \varepsilon]$.   
   Therefore, for all $t\in[0,\varepsilon]$, $Y$ has been shown to be a solution of an  SDE without drift given by
  \begin{equation}
    Y_t=Y_0+\int_0^t \nabla \phi(s,\phi^{-1}(s,Y_t))\cdot \sigma(s,\phi^{-1}(s,Y_s))dW_s,
  \end{equation}
 where $\nabla \phi \cdot \sigma$ is intended component-wise, in the sense that it is a vector with $i$-th component given by $\nabla \phi^i \cdot \sigma$.  
  We now highlight {\em two facts} about this transformation.
  \begin{enumerate}
    \item $\phi$ is unbounded because the terminal condition is unbounded, in particular it cannot belong
 to $L^p$ (or Sobolev type) spaces which are more analytically tractable.
 We recall nevertheless the fact that in \cite{issoglio_russoPDEa} one considers
 a framework of linear growth functions for exhibiting existence and uniqueness of a fixed point.
 \item  $\phi$ is invertible only on small time interval. 
\end{enumerate}
 To address those two issues, for simplicity we will restrict to the case of additive noise, i.e.
 when $\sigma \equiv \text{I}$.
 
 {\em About fact 1.} 
 To overcome the problem of  $\phi$ being unbounded, we can decompose the Zvonkin transformation
as
\begin{equation} \label{eq:phi}
  \phi(t, x) = x + u(t,x),
\end{equation}
  where $u^i$ is the solution of
\begin{equation}\label{eq:Zvonkin}
\begin{cases}
\partial_t  u^i+\frac{1}{2}\Delta u^i+\nabla  u^i \cdot b = -b^i, \\ 
u^i(T, \cdot) \equiv 0,
\end{cases}
\end{equation}
 and $b^i$ is the $i$-th component of the vector $b$. In this way, one can now prove that $u$ is bounded.

 The reader can check that the right-hand side of \eqref{eq:Zvonkin}, given by $-b^i$,  is a direct consequence of
 the chosen decomposition of $\phi$ and the linearity of the PDE \eqref{eq:orig_zvonkin}.
  Proceeding formally again, and applying  It\^o's formula to $u^i(t,X_t)$ where $X$ is a solution of
 \begin{equation}\label{eq:SSDE}
X_t =X_0+ \int_0^t  b(s, X_s) ds +  W_t,
\end{equation}
  we get
\begin{align*}
u^i(t, X_t) - u^i(0, X_0) 
  & = - \int_0^t  b^i(s, X_s) ds +  \int_0^t \nabla  u^i (s, X_s) dW_s,
\end{align*} 
and from this, one has the equality 
\[
\int_0^t  b^i(s, X_s) ds = u^i(0, X_0) - u^i(t, X_t)   +  \int_0^t \nabla u^i (s, X_s) dW_s.
\]
Substituting into the SDE \eqref{eq:SSDE}  we can write 
\[
X_t = X_0 + u(0, X_0) - u(t, X_t)   +  \int_0^t \nabla u (s, X_s) dW_s +W_t,
\]
which is equivalent to the original SDE but the drift has been removed. Notice that we have used again the shorter notation
$ \int_0^t \nabla u (s, X_s) dW_s$ for the vector with components $\int_0^t \nabla u^i (s, X_s) dW_s$.
Setting $ Y_t: = X_t+  u(t, X_t) = \phi(t, X_t)$ one observes that $Y$ solves
\[
Y_t = Y_0 + \int_0^t \nabla u(s, \phi^{-1}(s, Y_s)) dW_s +W_t,
\]
 with $Y_0: = \phi(0,X_0)$, 
which is a classical SDE for $Y_t$ provided that the inverse $y \mapsto \phi^{-1}(t,y)$ is well-defined.

{\em About fact 2.}  We already pointed out that this transformation is not invertible on the whole time interval $[0,T]$, for arbitrary $T\in\R$. As we mentioned above, Zvonkin in \cite{zvonkin} proved invertibility in a small time interval $[0,\eps]$.
A way to circumvent the non-invertibility issue
is to modify the PDE \eqref{eq:Zvonkin} by adding a dumping term of the form $\lambda u^i$  (or equivalently adding $\lambda (\phi^i - \text{id})$ in PDE \eqref{eq:orig_zvonkin}) for a suitable  parameter $\lambda$ which allows to get a small bound on  the norm $\nabla u$ and deduce that $\phi$ defined in \eqref{eq:phi}
is invertible.
 The suitable modified  PDE is 
\begin{equation}\label{eq:Zvonkin_modif}
\begin{cases}
\partial_t u^i+\frac12 \Delta u^i+\nabla u^i b = \lambda u^i -b^i \\ 
u^i(T, \cdot) \equiv 0,
\end{cases}
\end{equation}
with $\lambda>0$ large enough: details on the size of the parameter $\lambda$ can be found in \cite{flandoli_et.al14}. 
We remark that the dumping term $\lambda u$ produces somehow the same effect of a discounting term $e^{-\lambda t}$
in time. 
Proceeding as above one would get the equivalent SDE
\begin{equation}\label{eq:Y}
Y_t = Y_0 +\lambda \int_0^t u(s, \phi^{-1}(s, Y_s)) ds + \int_0^t \nabla u(s, \phi^{-1}(s, Y_s)) dW_s +W_t.
\end{equation}
This formulation was used in \cite{flandoli_et.al14} to define
the so called {\it virtual} solutions to SDEs where the drift is a
(Schwartz) distribution\footnote{For example $b\in C_T \C^{-\beta}$. For a precise definition of Besov-Holder spaces $\C^\alpha, \alpha \in \R$ see Section \ref{sc:Besov_spaces}. In this case, a natural space where one can find a fixed point for the PDE \eqref{eq:Zvonkin_modif} is $C_T \C^\alpha$ for some suitable $\alpha>0$.  See Section \ref{ssc:singularPDE} for the treatment of singular PDEs.}, see also Section
\ref{sec:solution_sing_SDE} for other types of solutions.

\subsection{Markov marginal uniqueness vs uniqueness
in law}\label{sc:ethier-kurtz}

In this  section we show how to get uniqueness of the martingale problem using one technique
described in Chapter 4 of \cite{ethier_kurtz},
which is a powerful tool in order to show uniqueness in law knowing uniqueness of
the time-marginals. A crucial exploited tool is the validity of Markov property for any initial time $t_0$
and initial condition a given Borel probability measure
$\nu$ on $\R^d$.
We denote again by $Z$ the canonical process on $\mathcal C([0,T]; \mathbb R^d)$.

Ethier and Kurtz in  \cite{ethier_kurtz} consider a more general formulation of martingale problem
pa\-ra\-me\-tri\-sed by a set of Borel functions that we denote $\sha_{EK}$. 
We propose here a reformulation of Theorem 4.2 of \cite{ethier_kurtz} which takes into account
the time-inhomogeneous case.
In our case $\sha_{EK}$ will be a  set of couples of Borel functions
$\varphi, g:[0,T] \times \R^d \rightarrow \R$.
A solution to the martingale problem  MP$(\sha_{EK}, \nu; t_0)$ will be
a Borel probability $\P$ on the canonical space 
such  that
\[
\varphi(t, Z_t) - \varphi(t_0,Z_{t_0}) - \int_{t_0}^t g (s, Z_s) ds
\]
is a local martingale under $\P$
and the law of $Z_{t_0}$ is $\nu$.
This context applies to the following cases.
\begin{enumerate}
\item[(a)] The classical Stroock-Varadhan martingale problem setting for instance
  $\sha_{EK}$ as the set of $(\varphi,g)$, where $\varphi \in  C^{2}(\R^d)$ with compact support   and $g(t,\cdot) = L_t \varphi$, where  $L_t$ was defined in \eqref{eq:Lt}.
\item[(b)] The rough martingale problem with distributional drift $b$ introduced in Definition  \ref{def:MK_MP}
with corresponding domain $\shd$.
In this case  $\sha_{EK}$ is a suitable  of couples $(\varphi,g)$
where $\varphi \in \shd$ and $g = L \varphi$.
   \end{enumerate}
   Next we introduce the key uniqueness property for marginals, denoted as \emph{Property M} in Definition \ref{def:Prop_P},
   which is sufficient to imply uniqueness of the solution, see Theorem \ref{thm:EK-extended} below.
   The definition and the theorem below are taken from \cite{issoglio_et.al24} and adapted
   to the Ethier-Kurtz general framework.
\begin{definition}[Property M in \cite{issoglio_et.al24}]\label{def:Prop_P}
Let ${\nu}$ be a Borel probability measure on $\R^d$, and $t_0 \in [0,T)$.
We say that {\em Property M} holds for MP$(\mathcal A_{EK}, \nu; t_0)$ if given $ \mathbb P$ solution to the MP$(\mathcal A_{EK} , \nu; t_0)$ then the marginal laws are uniquely determined, that is if $ \mathbb P^1 $ and $ \mathbb P^2$ are two solutions then 
\begin{equation}
\mathcal L^{\P^1}_{Z_t} = \mathcal L^{\P^2}_{Z_t} ,\qquad t\in [t_0,T].
\end{equation}
\end{definition}
The theorem below is an adaptation of Theorem A.2 in \cite{issoglio_et.al24}.

\begin{theorem}\label{thm:EK-extended}
If Property M holds for MP$(\sha_{EK},\nu; t_0)$  for every initial condition $\nu$ and every $t_0 \in [0,T)$, then we have uniqueness of  the MP$(\sha_{EK}, \nu; 0)$ for every $\nu$.
\end{theorem}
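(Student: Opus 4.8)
The plan is to reproduce, in the Ethier-Kurtz framework of \cite{ethier_kurtz} (Chapter~4), the classical argument that marginal uniqueness together with the Markov-type structure of martingale problems forces uniqueness of the whole law. Concretely, given two solutions $\P^1,\P^2$ of MP$(\sha_{EK},\nu;0)$, I would first show that all of their finite-dimensional distributions coincide, and then conclude, since the finite-dimensional distributions of a Borel probability on the canonical space $\Omega_T=\shc([0,T],\R^d)$ determine it uniquely.

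The key preliminary is a \emph{conditioning lemma}: if $\P$ solves MP$(\sha_{EK},\nu;t_0)$ and $t_0\le s\le T$, then a regular conditional probability distribution $(\P_\omega)_{\omega\in\Omega_T}$ of $\P$ given $\mathcal F_s$ can be chosen so that, for $\P$-a.e.\ $\omega$, $\P_\omega$ is a solution of MP$(\sha_{EK},\delta_{Z_s(\omega)};s)$. This is the standard fact that the martingale-problem formulation is stable under conditioning: for $(\varphi,g)\in\sha_{EK}$ the process $M^\varphi_t:=\varphi(t,Z_t)-\varphi(s,Z_s)-\int_s^t g(r,Z_r)\,dr$ satisfies $\E^\P[(M^\varphi_t-M^\varphi_u)\,\mathbf 1_A]=0$ for $s\le u\le t$ and $A\in\mathcal F_u$, and disintegrating this identity over $\mathcal F_s$ gives the $\P_\omega$-(local) martingale property. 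I expect this to be the main obstacle, for two reasons: one must select a \emph{single} $\P$-null set outside of which the martingale identity holds simultaneously for a countable determining family of pairs $(\varphi,g)$ and a countable dense set of times $u\le t$ (then extend to all times by a density argument using the path regularity), and, in the distributional-drift case, the domain $\shd$ is not $C_b^2$, so one has to know it admits such a countable determining subset and that the local-martingale localisation is compatible with conditioning. This is precisely what is carried out, in the relevant rough setting, in the proof of Theorem~A.2 of \cite{issoglio_et.al24}, and I would invoke that argument.

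With the conditioning lemma available, I would prove by induction on $n$ that for every $0\le t_1<\dots<t_n\le T$ and all bounded measurable $f_1,\dots,f_n:\R^d\to\R$, the number $\E^\P\big[\prod_{i=1}^n f_i(Z_{t_i})\big]$ is the same for every solution $\P$ of MP$(\sha_{EK},\nu;0)$. The case $n=1$ is exactly Property~M with $t_0=0$. For the inductive step, fix $t_1<\dots<t_{n+1}$ and condition on $\mathcal F_{t_n}$: by the conditioning lemma the regular conditional law is $\P$-a.s.\ a solution of MP$(\sha_{EK},\delta_{Z_{t_n}};t_n)$, so by Property~M applied with $t_0=t_n$ (note $t_n<t_{n+1}\le T$) and initial law $\delta_x$, its time-$t_{n+1}$ marginal is a kernel $q(x,\cdot)$ not depending on $\P$; a measurable version of $x\mapsto q(x,\cdot)$ is furnished by the measurability of the regular conditional distribution. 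Hence $\E^\P[f_{n+1}(Z_{t_{n+1}})\mid\mathcal F_{t_n}]=g(Z_{t_n})$ $\P$-a.s., with $g(x):=\int_{\R^d}f_{n+1}(y)\,q(x,dy)$ a fixed bounded measurable function, and therefore
\[
\E^\P\Big[\prod_{i=1}^{n+1}f_i(Z_{t_i})\Big]=\E^\P\Big[\Big(\prod_{i=1}^{n-1}f_i(Z_{t_i})\Big)(f_n\,g)(Z_{t_n})\Big],
\]
which involves only the $n$ time points $t_1,\dots,t_n$ with bounded measurable integrands, so the inductive hypothesis applies.

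Finally, the cylinder sets $\{Z_{t_1}\in A_1,\dots,Z_{t_n}\in A_n\}$, with $n\in\N$ and $A_i\in\mathcal B(\R^d)$, form a $\pi$-system generating the Borel $\sigma$-algebra of $\Omega_T=\shc([0,T],\R^d)$; consequently two Borel probabilities on $\Omega_T$ with identical finite-dimensional distributions coincide. Applying the previous step to $\P^1$ and $\P^2$ yields $\P^1=\P^2$, i.e.\ uniqueness of MP$(\sha_{EK},\nu;0)$ for every $\nu$, as claimed.
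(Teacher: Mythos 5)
Your proof is correct and follows essentially the same route the paper intends: the theorem is stated as an adaptation of Theorem 4.2 in Chapter 4 of \cite{ethier_kurtz} (via Theorem A.2 of \cite{issoglio_et.al24}), whose proof is exactly the conditioning lemma for regular conditional probabilities plus induction on finite-dimensional distributions using Property M at each later time $t_0=t_n$ with Dirac initial conditions. You also correctly identify the genuine technical point (a single null set working simultaneously for a countable determining subfamily of $\sha_{EK}$ and for the localisation of the local martingales), which is where the cited reference does the work.
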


Next we show how to check Property M in the case when it is possible to reformulate the domain of the martingale problem using a suitable PDE in conformity with item (b) above.
We define $G$ as a linear space containing the functions
including those of the type
$g(t,x) = g_0(t) g_1(x)$, $g_0, g_1$ smooth functions with compact support. 
$\sha_{EK}$ is the set of couples $(\varphi,g)$ such that $g \in G$ and
\begin{equation}\label{eq:D}
\begin{cases}
L\varphi = g\\
  \varphi(T)=0.
\end{cases}
\end{equation}
Suppose moreover that one has the well-posedness of the solution of the PDE
\eqref{eq:D} for every $g \in G$.
Then Property M follows by an easy argument, as illustrated below. This argument is taken from the proof of Theorem 5.11 in \cite{issoglio_et.al24}.
Without loss of generality we set $t_0=0$.
Assume that $ \P^1$ and  $ \mathbb P^2$ are two solutions with initial condition $\nu$. Let now   $g \in G$ of previous product type
and $\varphi$ be the unique solution of $ L\varphi = g $ with terminal condition $\varphi_T =0$.
By the definition of solution of MP$(\sha_{EK},\nu;0)$  
\begin{equation}
\mathbb E^{\mathbb P^i} \left[ \underbrace{ \varphi_T( Z_T)}_{=0} -  \varphi_0(Z_0) - \int_0^T g_0(s) g_1(Z_s) d s\right] =0,\qquad \text{for } i=1,2,
\end{equation}
and $\mathcal L^{\P^1}_{Z_0} = \mathcal L^{\P^2}_{Z_0}=\nu$. As a result,
taking the expectations, we get
\begin{equation}
  \int_0^T g_0(s) \E^{\P^1} [g_1(Z_s)]ds =
\int_0^T g_0(s) \E^{\P^2}[ g_1(Z_s)]ds.
\end{equation}
Finally $\P^1$ and $\P^2$ have the same marginals for almost all $s\in[0,T]$,
then Property M is satisfied for MP$(L,\nu)$, for any initial condition $\nu$, as wanted. Since the canonical process is continuous,
Property M is satisfied for MP$(L,\nu)$, for any initial condition $\nu$,
and for all $s \in [0,T]$.

\subsection{Stochastic sewing lemma}\label{sec:ssl}

One fundamental tool in recent works related to SDEs with irregular coefficients is the so called {\it stochastic sewing lemma},
introduced by \cite{KhoaLe}. That fundamental lemma constitutes a stochastic extension of the deterministic sewing lemma introduced
in \cite{gubinelli,FeyelDeLaPradelle1} in order to show the convergence of Riemann sums in the context of rough integrals. Some applications
can be found for example in  \cite{athreya2020, butkovsky.et.al2021,  goudenege23,  anzelletti24}.

We highlight now the link between the stochastic sewing lemma and SDEs. As an example, let us consider the $\R^d$-valued SDE
\begin{equation}\label{eq:fBm}
   X_t = X_0  + \int_0^t b(X_s) ds+  G_t,
\end{equation}
where $G$ is a Gaussian process
and $b$ an irregular drift, but still a function for the moment.

 When $G$ is a standard Brownian motion, one basic idea introduced by Davie \cite{Davie} in the context of time-dependent bounded Borel drifts  is  to show uniqueness of the solution by shifting the solution process
 by the noise: the author proves in fact a stronger property than pathwise uniqueness, the so called
 {\it path by path uniqueness}, which means in our case that for almost all $\omega$
 the random differential equation \eqref{eq:fBm} admits uniqueness.
 In particular, setting $\psi_t : = X_t -G_t$, this process is supposed to solve the random ODE
\begin{equation} \label{eq:shifted}
  \psi_t = X_0 + \int_0^t b(\psi_s + G_s) ds.
\end{equation}
This shifting idea has been implemented by many authors for studying various
SDEs with irregular drift.
We recall that the crucial point for proving well-posedness is pathwise uniqueness,
since strong existence
can be often obtained by mean of existence in law and
Yamada-Watanabe theorem.

Inspired by  \cite{KhoaLe} we discuss \eqref{eq:shifted} in the case
when $G$ is a fractional Brownian motion $B^H$ and
$b$ is a locally bounded function. 
Given two solutions $\psi$
and $\bar \psi$, the idea consists in  showing
that $\psi - {\bar \psi}$ solve one linearised differential equation,
for which uniqueness can be established.

In the case that $ b$ is smooth one can write
\begin{equation} \label{eq:linearized}
  \psi_t -{\bar \psi}_t = \int_0^t  (\psi_r -{\bar \psi}_r) dV_r(b),
  \end{equation}
where
$$  V_t(\phi) = \int_0^t  \int_0^1 (\nabla \phi_r)(B^H_r +  \theta \psi_r + (1-\theta) \bar \psi_r) d\theta dr,$$
and $\phi \in C^{0,2}_b([0,T] \times \R^d)$.

The idea is to extend previous map $\phi \mapsto V(\phi)$
to all Borel bounded functions on $[0,T] \times \R^d$.
For a similar task \cite{KhoaLe} makes use of the stochastic sewing lemma, for which 
there exist several versions. Below we recall one of them, whose proof can be found in Theorem 2.1 and Theorem 2.3 in \cite{KhoaLe}.
Let $(\varphi_s)$ be a progressively measurable stochastic process.
Expressing $B^H_t = \int_0^t K(t,r)dW_r, $ where $K$ is a suitable  kernel and $W$ a standard Brownian motion, see e.g.\ 
Chapter 5 \cite{nualart}, it is possible to  show the 
property
\begin{equation} \label{eq:semig}
  \E^{\shf_s}\left[\int_s^t \nabla \phi_r (B^H_r + \varphi_s)dr\right]=\int_s^t
  (\nabla P_{\sigma^2_H(s,t)}\phi_r(\cdot))(\E^{\shf_s}\left[B^H_r\right] +
  \varphi_s)dr,
\end{equation}
where $(P_t)$ is the heat semigroup and $\sigma^2_H(s,t) = \int_s^t K^2(t,r) dr$.
We remark that the right-hand side makes sense also when
$\phi$ is not smooth, which allows to extend the left-hand side to
bounded Borel $\phi$.

Inspired by previous expression one defines a proxy
random field $(A_{s,t}(\phi))$ of
the conditional expectation
$\E^{\shf_s}(V_t(\phi) - V_s(\phi))$,
setting
\begin{equation} \label{eq:sha}
A_{s,t}[\phi] = \int_0^1
\int_s^t (\nabla P_{\sigma^2_H (s,t)}\phi_r(\cdot))
(\E^{\shf_s}\left[B^H_r\right] +
  \theta \psi_s + (1-\theta) \bar \psi_s))dr d\theta,
\end{equation}
having in view $\varphi_s =  \theta \psi_s + (1-\theta) \bar \psi_s$
for any $\theta \in [0,1]$.
Then the sewing lemma stated below
will provide a map $\phi \mapsto \sha(\phi)$
such that, for every $\phi$
$$ \sha_t(\phi) - \sha_s(\phi) \simeq A_{s,t}[\phi] ,$$
in the sense of \eqref{eq:thetaProxy}.
This allows to extend $\phi \mapsto V(\phi)$ from $C^{0,2}_b([0,T] \times \R^d)$
to bounded Borel functions $\phi$ and $V(\phi) = \sha(\phi)$
 continuously with respect to  some suitable norm.
Moreover  \eqref{eq:linearized} will  hold replacing
$V(b)$ with $\sha(b)$,
where the integral is in the non-linear Young sense
expressed as limit of a non-linear Riemann sum of the type
\begin{equation} \label{eq:RiemannYoung}
 \sha_t:= \int_0^t \sha_{ds}(\varphi_s) := \lim \sum_{i} A_{t_i, t_{i+1}} (\varphi_{t_i}),
\end{equation}
where the mesh of the subdivision $(t_i)$ (typically dyadic) goes to zero.
Indeed  equation \eqref{eq:linearized} with $V(b)$ replaced by its extension  $\sha(b)$
 will still conserve the uniqueness property.

\begin{lemma}[Stochastic sewing lemma of \cite{KhoaLe}]\label{lm:ssl}
 Let $(\mathcal F_t)_{t\geq0}$ be a filtration on a complete probability space $(\Omega,  \mathcal F, \P)$.
Let $p\geq 2$, $0\leq S<T$ and let $A_{s,t} \in L^p (\Omega)$ for $S\leq s\leq t \leq T$  be a two-parameter stochastic process, with $A_{s,t} \in \mathcal F_t$. Let $\delta A_{s,u,t}:= A_{s,t}-A_{s,u}-A_{u,t}$ for $s\leq u \leq t$. Suppose that for some $\varepsilon_1, \varepsilon_2>0$ and $C_1, C_2>0 $ we have 
\begin{align}  \label{eq:thetaProxy}
&\|\mathbb E [\delta A_{s,u, t}\vert \mathcal F_s]\|_{L^p (\Omega)} \leq C_1 |t-s|^{1 + \varepsilon_1}, \nonumber \\
& \\
&\| \delta A_{s,u, t} -\mathbb E [\delta A_{s,u, t}\vert \mathcal F_s]\|_{L^p (\Omega)} \leq C_2 |t-s|^{\frac12 + \varepsilon_2}. \nonumber
\end{align}
Then there exists  a unique adapted process $(\mathcal A_{t})_{t\in[S,T]}$ such that $\mathcal A_S=0$, $\mathcal A_t$ is $\mathcal F_t$-measurable and $L^p(\Omega)$-integrable,  and for all $S\leq s\leq t \leq T$ 
\begin{align*}
&\| \mathcal A_{t} -\mathcal A_{s} - A_{s,t}  \|_{L^p (\Omega)} \leq N_1 |t-s|^{1 + \varepsilon_1}+ N_2 |t-s|^{\frac12 + \varepsilon_2},\\
&\|\mathbb E [\mathcal A_{t} -\mathcal A_{s} - A_{s,t}  \vert \mathcal F_s]\|_{L^p (\Omega)} \leq N_1 |t-s|^{1 + \varepsilon_1},
\end{align*}
for some constants $N_1>0, N_2>0$.
If moreover $ \| A_{s, t} \|_{L^p (\Omega)} \leq C_3 |t-s|^{\frac12 + \varepsilon_3}$ then it also holds 
\[
\| \mathcal A_{t} -\mathcal A_{s}\|_{L^p (\Omega)} \leq N_1 |t-s|^{1 + \varepsilon_1} + N_3 |t-s|^{\frac12 + \varepsilon_3},
\]
for some constant $N_3>0$. 
\end{lemma}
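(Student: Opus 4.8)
The plan is to construct $\sha$ as the $L^p(\Omega)$-limit of nonlinear Riemann sums along dyadic partitions, to read off the two estimates by summing a geometric series, and to settle uniqueness by an additivity argument. First I would fix $t\in[S,T]$ and, for $n\in\N$, take the dyadic partition $t^n_i:=S+i\,2^{-n}(t-S)$, $0\le i\le 2^n$, setting $\sha^n_t:=\sum_{i=0}^{2^n-1}A_{t^n_i,t^n_{i+1}}$. Refining from level $n$ to level $n+1$ inserts the midpoints $u^n_i:=\tfrac12(t^n_i+t^n_{i+1})$, and by the definition of $\delta A$ one gets
\[
\sha^{n+1}_t-\sha^n_t=-\sum_{i=0}^{2^n-1}\delta A_{t^n_i,u^n_i,t^n_{i+1}}.
\]
The key will be to bound the $L^p$-norm of the right-hand side, for which I would split each summand as $\delta A_{t^n_i,u^n_i,t^n_{i+1}}=\E[\delta A_{t^n_i,u^n_i,t^n_{i+1}}\mid\shf_{t^n_i}]+\xi^n_i$, where $\xi^n_i$ denotes the mean-zero fluctuation $\delta A_{t^n_i,u^n_i,t^n_{i+1}}-\E[\delta A_{t^n_i,u^n_i,t^n_{i+1}}\mid\shf_{t^n_i}]$.

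The ``conditional'' part is easy: by the triangle inequality and the first bound in \eqref{eq:thetaProxy}, the sum of $2^n$ terms each of size $\le C_1(2^{-n}(t-S))^{1+\eps_1}$ is at most $C_1(t-S)^{1+\eps_1}2^{-n\eps_1}$, which is summable in $n$. The fluctuation part is the real work, and I expect it to be the main obstacle. Since $A_{r,r'}\in\shf_{r'}$ we have $\xi^n_i\in\shf_{t^n_{i+1}}$, and by construction $\E[\xi^n_i\mid\shf_{t^n_i}]=0$; because $t^n_{i+1}$ is the left endpoint of the next subinterval, the partial sums $k\mapsto\sum_{i<k}\xi^n_i$ form a discrete martingale with respect to $(\shf_{t^n_k})_k$. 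Exploiting $p\ge2$, the Burkholder--Davis--Gundy inequality and Minkowski's inequality give $\|\sum_i\xi^n_i\|_{L^p}\le c_p(\sum_i\|\xi^n_i\|_{L^p}^2)^{1/2}$, and the second bound in \eqref{eq:thetaProxy} controls $\|\xi^n_i\|_{L^p}$ by $2C_2(2^{-n}(t-S))^{1/2+\eps_2}$; summing $2^n$ of them yields $\|\sum_i\xi^n_i\|_{L^p}\le 2c_pC_2(t-S)^{1/2+\eps_2}2^{-n\eps_2}$, again geometrically summable. Spotting this martingale structure and invoking BDG is exactly where the assumptions $p\ge 2$ and the $\tfrac12+\eps_2$ exponent are used --- it is the stochastic analogue of the geometric summation underlying the deterministic sewing lemma. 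Combining the two parts shows $(\sha^n_t)_n$ is Cauchy in $L^p(\Omega)$, and I would define $\sha_t$ as its limit; adaptedness and $L^p$-integrability are inherited from the $\sha^n_t$.

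It then remains to verify the stated bounds and uniqueness. Summing the geometric estimates from $n=0$ gives $\|\sha_t-A_{S,t}\|_{L^p}\le N_1(t-S)^{1+\eps_1}+N_2(t-S)^{1/2+\eps_2}$, which is the first inequality for $s=S$; the conditional version follows because $\E[\,\cdot\mid\shf_S]$ removes the martingale contribution up to the higher-order sewing error. For general $s\le t$ I would run the same construction on $[s,t]$ to get a quantity $\sha_{s,t}$, prove the additivity $\sha_{s,u}+\sha_{u,t}=\sha_{s,t}$ by comparing common refinements of the dyadic partitions of the three subintervals, and then set $\sha_t:=\sha_{S,t}$ so that $\sha_t-\sha_s=\sha_{s,t}$, whence both estimates hold in general. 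For uniqueness, if $\tilde\sha$ enjoys the same properties then $D_{s,t}:=(\sha_t-\sha_s)-(\tilde\sha_t-\tilde\sha_s)$ is exactly additive ($\delta D\equiv0$), with $\|D_{s,t}\|_{L^p}\le 2N_1|t-s|^{1+\eps_1}+2N_2|t-s|^{1/2+\eps_2}$ and $\|\E[D_{s,t}\mid\shf_s]\|_{L^p}\le 2N_1|t-s|^{1+\eps_1}$; writing $D_{S,t}=\sum_iD_{t^n_i,t^n_{i+1}}$ at dyadic level $n$ and decomposing each term into its $\shf_{t^n_i}$-conditional expectation and the complementary fluctuation exactly as above, the two pieces are $O(2^{-n\eps_1})$ and $O(2^{-n\eps_2})$ in $L^p$, so letting $n\to\infty$ forces $\|D_{S,t}\|_{L^p}=0$, i.e.\ $\sha=\tilde\sha$. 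The final assertion is then a one-line triangle inequality: $\|\sha_t-\sha_s\|_{L^p}\le\|\sha_t-\sha_s-A_{s,t}\|_{L^p}+\|A_{s,t}\|_{L^p}$, and combining the bound just proved with $\|A_{s,t}\|_{L^p}\le C_3|t-s|^{1/2+\eps_3}$ and using $|t-s|\le T$ to reconcile the remaining powers yields the stated estimate with a suitable constant $N_3$.
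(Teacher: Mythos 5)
Your proposal is correct and follows essentially the same route as the proof the survey points to (Theorems 2.1 and 2.3 of \cite{KhoaLe}, which the paper cites rather than reproduces): dyadic Riemann sums, refinement errors split into a predictable part summed by the triangle inequality using the first bound in \eqref{eq:thetaProxy} and a martingale-difference part controlled via Burkholder--Davis--Gundy and Minkowski (this is exactly where $p\ge 2$ and the exponent $\tfrac12+\eps_2$ enter), geometric summability, and uniqueness by applying the same predictable/martingale decomposition to the additive difference $D$. The only points you leave implicit --- the partition-independence of the limit needed to justify the additivity $\sha_{s,u}+\sha_{u,t}=\sha_{s,t}$ (one must compare a partition with an arbitrary refinement, not only with its midpoint bisection), and the absorption of the residual $N_2|t-s|^{1/2+\eps_2}$ term into $N_3|t-s|^{1/2+\eps_3}$ in the last estimate, which requires $\eps_2\ge\eps_3$ or simply keeping it as an extra summand --- are standard and do not affect the validity of the argument.
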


The sewing lemma technique, useful to study irregular SDEs in the strong sense, 
can be applied to the study of McKean SDEs, see
Sections \ref{sec:Boltzmann} and \ref{sec:McKean-point}.

\section{Singular McKean-Vlasov SDEs: the function case}\label{sec:4}

In this section we consider the case where the drift and the diffusion coefficient are not Lipschitz in time and space,
for example when they are $L^p$-$L^q$ functions or bounded measurable functions.
In particular, in Sections \ref{sec:LpLq_w} and \ref{sec:LpLq_p} we review extensions and variations of the well-known work by Krylov-R\"ockner  \cite{kry-rock} in the McKean-Vlasov framework, when the dependence on the law may require the existence of the density or not.
In Section \ref{sec:pm} we review a class of generalised porous media equations,
Section \ref{sec:radially} focuses more specifically on the radially symmetric case
and Section \ref{sec:pLap} mentions the probabilistic representation of a $p$-Laplace type PDE, viewed as Fokker-Planck type equation.
  Finally we illustrate two classes of atypical  McKean-Vlasov SDEs that arise, respectively, from a reformulation of SDEs with drift  depending on conditional laws in Section \ref{sc:con_exp} and  time-reversal of diffusions in  Section \ref{sc:time-rev}.

\subsection{$L^p$-$L^q$: Wasserstein-type dependence}\label{sec:LpLq_w}

In this section and in the following one,  we focus on the class of well-known $L^p$-$L^q$ coefficients introduced in the seminal paper \cite{kry-rock} by Krylov and R\"ockner, where they proved strong well-posedness for SDEs not of McKean-type. We recall that  $L^p$-$L^q$ functions  are elements of $ L^q([0,T];L^p(\R^d))$, where the parameters $(p,q)$ satisfy a version of Ladyzhenskaya-Prodi-Serrin condition, in particular $(p,q) \in \mathcal{K}$ where
\begin{equation}
 \mathcal{K}:=\bigg\{(p,q)\in (1,\infty)^2 \bigg| \frac{d}{p}+\frac{2}{q}<1 \bigg\}.
\end{equation}
Notice that  the  weaker condition $\frac{d}{p}+\frac{2}{q}<2 $ is usually sufficient if one is interested only in weak well-posedness.  
Throughout this section, we consider McKean-Vlasov SDEs of the form
\begin{equation}\label{eq:lplq}
dX_t=b(t,X_t,\mathcal L^\P_{X_t})dt+\sigma(t,X_t,\mathcal L^\P_{X_t})dW_t,
\end{equation}
where  assumptions on $b$ and $\sigma$ will be specified on a case by case basis below.

Amongst the first articles dealing with \eqref{eq:lplq}   we cite the pair of papers \cite{HuangWangSPA,HuangWang2021}. In \cite{HuangWangSPA} 
 the authors establish weak and strong well-posedness for \eqref{eq:lplq}
under different assumptions for
the drift and diffusion coefficient, see Theorem 2.1 in  \cite{HuangWangSPA}.
In particular  weak existence is proved via tightness arguments and Krylov's type estimates.
The aforementioned theorem also states strong well-posedness
when the $L^\infty$-norm  of the drift
$b$ and the diffusion $\sigma$ are
Lipschitz with respect to  the spatial  and the measure component, the latter with respect to the $\theta$-H\"older Wasserstein distance
for some $\theta \ge 1$ and the drift is bounded by an $L^p$-$L^q$ function in time and space, uniformly with respect to the measure.
Concerning pathwise uniqueness, given two solutions, they also naturally solve
two non-McKean SDEs.
The authors use a Zvonkin transformation
for those  SDEs and later
 a stochastic version of Gr\"onwall lemma involving the Wasserstein distance.
This identifies the laws of the solutions and finally they use  classical pathwise uniqueness for non-McKean SDEs to show the equality of the processes.
In the other work \cite{HuangWang2021}, the diffusion coefficient is assumed independent of the law and the authors study strong and weak well-posedness for both initial values and initial  distributions under  weaker assumptions on the drift,
in particular Lipschitz continuity in total variation distance and H\"older-type condition in the spatial component. Under slightly stronger assumptions on the drift than in  \cite{HuangWangSPA} they also obtain stability estimates.
The proof for  well-posedness follows the same steps as the one provided in \cite{HuangWangSPA}, whilst for stability estimates Harnack-type inequalities and stochastic Gr\"onwall inequality are employed.

The assumptions in \cite{HuangWangSPA} are relaxed also in paper \cite{RocknerZhang}. In particular, they  first prove weak existence of a solution to the McKean-Vlasov SDE \eqref{eq:lplq}  in Theorem 3.6 of \cite{RocknerZhang} when $\sigma \sigma^\top$ is non-degenerate (uniformly in time and measure variable), H\"older continuous in space uniformly in the measure, and the drift belongs to a localised version of  $L^p$-$L^q$ in time and space, uniformly in the measure.
This is proved with tightness techniques. If furthermore, the gradient of the diffusion coefficient also belongs to a localised version of  $L^p$-$L^q$ in time and space, uniformly in the measure,
they prove also existence of
a strong solution in Corollary 3.7 in \cite{RocknerZhang}.
For what concerns pathwise uniqueness, the authors
have two separate results. In Theorem  4.2 of \cite{RocknerZhang} they  postulate
a Lipschitz dependence in localised version of $L^p$-$L^q$ 
(resp.\ $L^\infty$) for $b$ (resp.\ $\sigma$) 
in the measure variable, again with respect to the $\theta$-H\"older Wasserstein distance.
In Theorem 4.3 of  \cite{RocknerZhang} they instead assume (on top of strong  existence assumptions
in  Corollary 3.7 of \cite{RocknerZhang}) that $\sigma$ is independent of the measure and that $b$ has  Lipschitz dependence in a localised version of $L^p$-$L^q$ 
this time  with respect to the measure in a weighted  total variation  distance.

In \cite{deRaynal} the author establishes existence and uniqueness for a special class of McKean-Vlasov SDE with H\"older continuous drift with respect to the measure component
in the Wasserstein sense.  In particular,  both the structure of the drift and diffusion coefficients
must be of the form $f(t,x,\langle \phi, \mathcal L_X^\P \rangle)$ for some given function $\phi$, where the function  $f$ is supposed to be differentiable in the third (real) variable;  the function $\phi$ is allowed to be
H\"older for the drift, and Lipschitz for the diffusion. Consequently $\nu\mapsto b(t,x,\langle \phi, \nu \rangle)$ is indeed H\"older continuous  the Wasserstein sense.
Finally the diffusion must be  uniformly non-degenerate, Lipschitz in  the space variable uniformly in time, and its derivative with respect to the third component must be H\"older continuous in space uniformly in time.
Under these conditions the author proves strong well-posedness using the Zvonkin transformation. Notice that the underlying PDE here is infinite dimensional since the third argument is a measure and hence Lions derivatives are employed.
The PDE is solved using a parametrix expansion of the transition density of the McKean-Vlasov process.

In Theorem 3.2 in  \cite{GaleatiLing23} the authors prove strong well-posedness of McKean-Vlasov SDE \eqref{eq:lplq}, under the same non-degeneracy condition
 for the diffusion matrix $\sigma \sigma^\top$ as \cite{RocknerZhang}, an $L^p$-$L^\infty$ condition on $b$ and $\nabla \sigma$ uniformly in the measure variable.
Moreover, with respect to the measure, $\sigma$ fulfills the assumption
$\| \sigma(t, \cdot, \mu) - \sigma(t, \cdot, \nu) \|_{L^p} \leq \mathcal W_{\theta}(\mu,\nu)$, for some $\theta \ge 1$.
The main novelty here however is  that the drift $b$ is Lipschitz only with respect to the measure variable in Wasserstein distance uniformly in the
the Sobolev  $W^{-1,p}$-norm,  uniform 
in  time, 
hence much weaker than papers reviewed up to this point.
Strong well-posedness relies on stability estimates of non-McKean-Vlasov SDEs with $L^p$-$L^q$ coefficients, derived in Theorem 2.1 of \cite{GaleatiLing23},
and these estimates are used for a standard fixed point argument similar to the classical one reviewed in Section \ref{sec:sznitman}.

\subsection{$L^p$-$L^q$: Density dependence type}\label{sec:LpLq_p}

For what concerns $L^p$-$L^q$ drifts with density dependence of convolutional type,
we mention \cite{Olivrichtoma23, Olivrichtoma},
where the authors study a problem of propagation
of chaos for moderately interacting particles,
either on $\R^d$ or on  the torus $\mathbb T^d$, respectively, and their limiting McKean equation.
The two models are similar, the first one (in infinite volume) necessitates a cutoff,
the second one is in finite volume.
\cite{Olivrichtoma} includes  the case when Fokker-Planck PDE
coincides with Burgers equation, i.e. when the convolution kernel
is a Dirac measure.

We review here the case on $\R^d$  presented in \cite{Olivrichtoma23}.
The particle system considered is
\begin{equation} \label{eq:cutoff}
    \left\{
    \begin{array}{l}
         dX_t^{i,N}=F_A \left[\frac{1}{N}\sum_{k=1}^N(K*V^N)(X_t^{i,N}-X_t^{k,N}) \right] dt+\sqrt{2}dW_t^i,\quad \forall i=1,\dots,N,\quad t\leq T\\
         X_0^{i,N}\sim v_0(x)dx,
    \end{array}
    \right.
\end{equation}
where $(V^N)$ is the sequence of 
probability densities on $\R^d$ as the one introduced in equation
\eqref{eq:V_N}   in Section \ref{sec:pdd}, converging to the Dirac measure at zero.
  $K(x):=-C\frac{x}{|x|^d}$ is a singular kernel and $C>0$, and $F_A$ is a cutoff function  which is constant 
 equalizing the identity inside the hypercube $[-A,A]^d$.
 The cutoff function $F_A$ will not be present in the limit \eqref{eq:kellerSDE} below, because $A$ can be chosen large enough
 so that the argument of $F_A$ in \eqref{eq:cutoff} belongs to the  hypercube $[-A,A]^d$.
The McKean-Vlasov SDE resulting in the limit as $N$ goes to infinity is indeed
\begin{equation}\label{eq:kellerSDE}
    \left\{
    \begin{array}{l}
         dX_t=(K*v(t))(X_t)dt+\sqrt{2}dW_t\\
         X_0\sim v_0(x)dx,
    \end{array}
    \right.
\end{equation}
where $v\in C_TL^p(\R^d)$ and $v(t)$ is the law density of $X_t$ for every $t \in [0,T]$.  In fact it is naturally  expected,
that  $v$ is a solution to the non-linear Fokker-Planck PDE,
\begin{equation}\label{eq:kellerPDE}
    \left\{
    \begin{array}{l}
         \partial_t v(t,x)=\Delta v(t,x)-\textrm{div}(v(t,x)(K*v(t))(x)),\quad t\in (0,T], x\in\R^d\\
         v(0,x)=v_0(x),\quad x\in\R^d.
    \end{array}
    \right.
\end{equation}
In fact, the authors are able to show existence and uniqueness of a local in time solution $v$  of the non-linear Fokker-Planck PDE,  that is,  the terminal time $T$ must be smaller than a certain $T^*$.
Afterwards they formulate a stochastic partial differential equation fulfilled by  the mollified empirical measure $v^N:=V^N*\mu^N$, where $\mu^N=\frac{1}{N} \sum_{i=1}^N \delta_{X^i}$:
this way of proceeding turns out to be useful to study the convergence of the sequence $(v^N)$ towards the mild solution $v$ of \eqref{eq:kellerPDE}, which is done by simply comparing the mild formulations. 
This strategy was first implemented in \cite{FlandoliOlivera2019} in the framework of particle approximation of a Fisher-Kolmogorov-Petrowskii-Piskunov PDE
and it was used by the same authors
also in \cite{FlandoliOlivera2020} for the  approximation of 2D vorticity Navier-Stokes equations.

\subsection{About generalised porous media equations}\label{sec:pm}

Let us now discuss  the case of  
{\em generalised porous media equations}, which are equations of the form
\begin{equation} \label{PME}
  \partial_t v= \Delta (\beta(v)), \ v(0,\cdot) = v_0(\cdot),
  \end{equation}
where $v_0 \in (L^1 \cap L^\infty)(\R^d)$, $\beta$ is monotone increasing, continuous at zero,  such that $\beta(0) = 0$,
possibly discontinuous.  The term {\em generalised} arises from the fact that when $\beta(v) = {\rm sign}(v) |v|^m$ with $m > 1$, equation \eqref{PME} corresponds to the classical \emph{porous media equation}.
The coefficient $\beta$ is associated with
a graph (multi-valued function) obtained by ``filling the gaps''.
In this case there is $\Phi:\R \rightarrow \R_+$ such that
\begin{equation} \label{eq:Phi}
  \beta(v)  = \Phi^2(v) v.
  \end{equation}
The study of the PDE makes use of Benilan-Crandall monotonicity techniques,
see e.g.\ \cite{benilan}. For simplicity, we will set $d=1$.
Suppose for simplicity  that $\Phi$ is bounded.
By  Proposition 3.4 in  \cite{BRR}, using the techniques in  \cite{BenilanCrandall}, there exists a unique solution $v \in (L^1 \cap L^\infty)([0,T]\times \R)$
 of PDE \eqref{PME} in the sense of distributions.
In particular,  we have
\begin{equation*} 
\int v(t,x) \varphi(x) dx = \int v_0(x) \varphi(x) dx + \frac{1}{2}
 \int_0^t ds \int \eta_v(s,x) \varphi''(x) dx, \quad   \forall \varphi \in \shs(\R), 
\end{equation*}
for some $\eta_v(t,x) \in \beta(v(t,x))\ {\rm for} \ dt \otimes  dx-{\rm a.e.}
 \ (t,x) \in 
    [0,T] \times \R,  \nonumber
$ such that $\eta_v \in (L^1 \cap L^\infty)([0,T]\times \R)$.

The probabilistic representation of the solution to the generalised porous media equation \eqref{PME} consists
in finding a solution $\P$
of the McKean SDE
\begin{equation} \label{McKeanPME}
\left \{
  \begin{array}{l}
    Z_t = Z_0 + \int_0^t \Phi(v(s,Z_s))dW_s, \  t \ge 0, \\
   \shl^\P_{Z_t}(dx) = v(t,x) dx, 
  \end{array}
  \right.
  \end{equation}
where $W$ is a standard Brownian motion.
This representation is in fact a direct consequence of Figalli-Trevisan superposition principle
stated in Theorem \ref{thm:figalli}. Indeed the solution $v$ of the PDE \eqref{PME}  
solves also the linearised equation
  \begin{equation} \label{PMELinearised}
  \partial_t v= \Delta (\phi(t,x) v),
  \end{equation}
  where $\phi(t,x) = \Phi^2(v(t,x))$.
By the aforementioned principle, 
  the probabilistic representation of 
this linear PDE is automatically a probabilistic representation
of \eqref{PME}.
This provides us with a solution to McKean SDE \eqref{McKeanPME}
therefore it establishes existence.

An interesting problem is the uniqueness of a general McKean SDE.
In fact, uniqueness may fail,
even when the coefficients of the SDE do not depend on the law,
see Remark \ref{RMistake1} for an example  on non-uniqueness in a Fokker-Planck equation,
when the diffusion coefficient is possibly degenerate.
In the case of a porous media equation \eqref{PME} with non-degenerate bounded  diffusion coefficient,
i.e. $ 0 < c \le \Phi \le C $, where $c, C$ are real constants,  uniqueness was proved in  \cite{BRR2}. 
Indeed if $\P^1, \P^2$ are two solutions then the marginals 
$v^1, v^2:[0,T] \times \R \rightarrow \R$ are solutions
of the porous media equation, which is well-posed,
therefore $v^1= v^2$, that we call $v$. 
At this point the uniqueness of the McKean SDE \eqref{McKeanPME} reduces
to the case of the  ordinary SDE
\begin{equation} \label{McKeanSDE-PME}
  Y_t= Y_0 + \int_0^t \Phi(v(s,Y_s))dW_s, t \in [0,T],
  \end{equation}
  Since $ 0 < c \le \Phi \le C $ the SDE above is well-posed
  see Exercises 7.3.2–7.3.4 in
\cite{stroock_varadhan}.

\vspace{5pt}

As previously mentioned, when $\beta(v) = {\rm sign}(v) |v|^m$ with $m > 1$, equation \eqref{PME} corresponds to the classical \emph{porous media equation}. The case $m = 1$ recovers the classical \emph{heat equation}, while for $0 < m < 1$, \eqref{PME} becomes the so-called \emph{fast diffusion equation}. When $m < 0$, it is referred to as the \emph{superfast diffusion equation}. Another motivating singular case occurs when $\beta(v) = |\log(v)|$, which leads to the \emph{fast logarithmic equation}, see \cite{VazquezLog}. Several developments in this directions also exist in the framework of the stochastically perturbed equation,
see e.g.\ \cite{ciotir-goreac}.

For $m > 1$ and $0 < m < 1$, there exists an important class of explicit solutions known as \emph{Barenblatt-Pattle solutions}. A probabilistic representation of these solutions for the porous media equation was provided in \cite{Ben_Vallois}, while \cite{BCR2} extended these techniques to fast diffusion equations, at least in the range $m > \frac{2}{3}$. An important feature of fast diffusion equations is the phenomenon of mass loss: there exist non-negative solutions $v$ such that, although the initial mass satisfies $\int_{\mathbb{R}^d} v_0(x)\,dx = 1$, one has $\int_{\mathbb{R}^d} v(t,x)\,dx < 1$ for later times. According to Section 5.5.4 in \cite{VazquezDecay}, mass conservation is lost when $0 < m < \frac{d-1}{d}$; see also Chapter 9 of \cite{VazquezBookPorous}.
\medskip

\medskip
The existence (and uniqueness, in the non-degenerate case $\Phi(v) \neq 0$) for the McKean SDE \eqref{McKeanPME}, via probabilistic representation,
 was first investigated in \cite{BRR, BRR2} for dimension $d=1$. The fundamental tool was the uniqueness of an
associated linear Fokker-Planck PDE of the type \eqref{P3.5}, where the diffusion coefficient $a(t,x)$ is given by
$\Phi^2(v(t,x))$ (and $b =0$), with $v$ being the solution to \eqref{PME}. In this setting, it was shown that, if $X$ is a diffusion process
with non-degenerate bounded coefficients, then the law $t \mapsto \mathcal{L}^\mathbb{P}_{X_t}(dx)$ can be expressed as $t \mapsto v(t,x)\,dx$, where $v \in L^2([0,T] \times \mathbb{R})$.

Subsequent work extended these results to higher dimensions ($d > 1$) and to other Fokker-Planck type equations, where existence could be handled using monotonicity methods and the Figalli-Trevisan superposition principle. A typical example, treated for instance in \cite{BarbuRockSIAM} and \cite{BarbuRoeckJFA}, is the PDE
\[
\partial_t v = \Delta(\beta(v)) - \nabla\cdot(b(v)v), \quad v(0,\cdot) = v_0(\cdot),
\]
where $b: \mathbb{R} \to \mathbb{R}^d$ is bounded and continuous. 
Generalised porous media PDE with Neumann boundary conditions and their probabilistic representation via 
reflected McKean SDEs on the half-line were investigated in \cite{ciotir}.

Concerning numerical simulations, the first step of simulating generalised porous media equation
with general (possibly discontinuous) coefficient $\beta$ was performed by \cite{cuvelier1} (in the one-space
dimensional case) and by \cite{cuvelier2} (in the multidimensional case),
by making use of the probabilistic representation via a McKean SDE, making use of particle system approximation
and density estimates.

\subsection{About the probabilistic representation of radially symmetric solutions
  of a generalised porous media type equation.}
\label{sec:radially}

We summarise here some significant results of \cite{cuvelier2}.
Let $\beta: \R \rightarrow \R$ as in Section \ref{sec:pm} and consider again
the generalised porous media type equation \eqref{PME},
supposed here to be continuous.
We suppose the existence of $\ell \ge 1$ such that
$$ \vert \beta(v) \vert \le C_\beta \vert v\vert^\ell.$$
Let $ v_0 \in (L^1 \cap L^\infty)(\R^d)$ to be radially symmetric.
Let $\Phi:\R \rightarrow \R$ as in \eqref{eq:Phi} and $\Phi_d(v)=\Phi(v) I_{d}$ and  $I_d$ is the unit matrix on $\R^d$.
We consider an $\R^d$-valued process $Y$
solution of
\begin{equation}\label{NLSDE_D}
  \left\{
      \begin{array}{l}
       Y_t=Y_0+ \int_0^t \Phi_d(v(s,Y_s))dW_s\\
        v(t,\cdot) \text{ is the law density of }Y_t,  \forall t\in(0,T]\\
       v(0,\cdot)=v_0.
      \end{array}
    \right.
\end{equation} 
The existence of a process $Y$ verifying \eqref{NLSDE_D} can
be provided by Figalli-Trevisan superposition principle.
By Proposition 1.4 of \cite{cuvelier2}, if $(Y,v)$ is a solution
of \eqref{NLSDE_D} then
$v$ is a solution in the sense of distributions of
\eqref{PME}. Also, by Proposition 5.4 of \cite{cuvelier2} there is
$\tilde v:[0,T] \times \R \rightarrow \R$ such that
$ v(t,x) = \tilde v(t,\vert x \vert^d)$.

Let us consider the process
\begin{equation} \label{eq:S}
  S_t := \vert Y_t \vert^d.
\end{equation}
  According to Lemma 5.6 in \cite{cuvelier2}
the law of $S_t$ admits a density $\mathrm{v}(t,\cdot)$  which fulfills
\begin{equation}\label{nu_u_tild_rel}
\mathrm v(t,\rho)=\frac{\displaystyle{{C}}}{\displaystyle{d}}\tilde{v}(t,\rho), \ \ \ \forall \rho >  0,
\end{equation}
where
\begin{equation}\label{Const_Parite}
    {C}=\frac{\displaystyle{2(\pi)^{\frac{d}{2}}}}{{\Gamma(\frac{{d}}{{2}})}},
  \end{equation}
  $\Gamma$ is the usual Gamma function and $\tilde v$
is such that $\tilde v(t,\cdot) $ is the law of $Y_t$.
On the other hand, if $w$ is defined by the right-hand side of \eqref{nu_u_tild_rel}, then
$w$ is a solution, in the sense of distributions, of
hand \begin{equation}\label{PDE_bessel}
    \partial_t w(t,\rho)={C}(1-d) \partial_{\rho}\left[\rho^{1-\frac{2}{d}}\beta\left(\frac{\displaystyle{d}}{\displaystyle{{C}}}w(t,\rho)\right)\right]+
    \frac{\displaystyle{{C} d}}{\displaystyle{2}}\partial_{\rho\rho}\left[\rho^{2-\frac{2}{d}}\beta\left(\frac{\displaystyle{d}}{\displaystyle{{C}}}w(t,\rho)\right)\right],
    \end{equation}
    with initial condition $w_0=\frac{{C}}{d}\tilde{v}_0$, where $\tilde{v}_0$
such that $v_0(x) = {\tilde v_0}(\vert x \vert^d)$
and    ${C}$  is the constant defined in \eqref{Const_Parite}.
 
Below we provide a natural probabilistic representation of \eqref{PDE_bessel}.
It is Proposition 5.10 in \cite{cuvelier2}.

\begin{proposition}\label{EDS_Bessel_Prop}
  Let $v_0$ be radially symmetric and $\tilde{v}_0$
  such that $v_0(x) = {\tilde v_0}(\vert x \vert)$. 
  
  For $y\neq 0$, $\rho>0$, we set
\begin{eqnarray}\label{Phi_1&2}
  \left\{
      \begin{aligned}
\Psi_1(\rho,y)&=(d^2-d)\rho^{1-\frac{2}{d}}\Phi^2\left(\frac{\displaystyle{d}}{\displaystyle{{C}}}y\right),\\
\Psi_2(\rho,y)&=d\rho^{1-\frac{1}{d}}\Phi\left(\frac{\displaystyle{d}}{\displaystyle{{C}}}y\right),
\end{aligned}
    \right.
\end{eqnarray}
where, ${C}$ is defined by \eqref{Const_Parite}.

\begin{description}
\item(i) Suppose that $(Z_t)$  is  a non-negative  process solving the non-linear SDE defined by
\begin{equation}\label{SDE_S_formelle}
  \left\{
      \begin{array}{l}
       Z_t=Z_0+ \int_0^t \Psi_2(Z_s,p(s,Z_s))dB_s+\int_0^t \Psi_1(Z_s,p(s,Z_s))ds\\[0.2cm]
        p(t,\cdot)\text{ is the law density  of } Z_t ,\  \forall t\in(0,T], \ Z_0\sim \frac{{C}}{d}\widetilde{v_0}.
      \end{array}
    \right.
\end{equation}
Then,  $p$  is a solution, in the sense of
distributions, of the PDE \eqref{PDE_bessel} with initial condition $\frac{{C}}{d}\tilde{v}_0$.

\item(ii) If $S$ is defined by \eqref{eq:S}, with marginal laws denoted by
$\mathrm v$, then $S$ verifies \eqref{SDE_S_formelle} with $Z=S$ and $p=\mathrm v$.
\end{description}
\end{proposition}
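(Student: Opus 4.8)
The plan is to read both parts off It\^o's formula, together with the elementary identities relating $\Phi$, $\beta$ and the constant $C$ of \eqref{Const_Parite}.

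\textbf{Proof of (i).} This is the standard derivation of a Fokker--Planck equation from a (non-linear) SDE. Fix $\varphi\in C^2_b(\R)$ and apply It\^o's formula to $\varphi(Z_t)$ using the SDE in \eqref{SDE_S_formelle}; after localising, taking expectations (so that the local martingale term has zero mean) and using that $p(s,\cdot)$ is, by hypothesis, the law density of $Z_s$, one obtains
\[
\int_\R\varphi(\rho)\,p(t,\rho)\,d\rho=\int_\R\varphi(\rho)\,p(0,\rho)\,d\rho+\int_0^t\!\!\int_\R\Big[\Psi_1(\rho,p(s,\rho))\,\varphi'(\rho)+\tfrac12\Psi_2^2(\rho,p(s,\rho))\,\varphi''(\rho)\Big]p(s,\rho)\,d\rho\,ds.
\]
Now insert $\Psi_1(\rho,y)=(d^2-d)\rho^{1-2/d}\Phi^2(\tfrac dC y)$ and $\Psi_2(\rho,y)=d\rho^{1-1/d}\Phi(\tfrac dC y)$, and use the defining relation $\beta(u)=\Phi^2(u)u$ of \eqref{eq:Phi} in the form $\Phi^2(\tfrac dC p)\,p=\tfrac Cd\,\beta(\tfrac dC p)$. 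This turns $\Psi_1(\rho,p)\,p$ into $(d-1)C\,\rho^{1-2/d}\beta(\tfrac dC p)$ and $\tfrac12\Psi_2^2(\rho,p)\,p$ into $\tfrac{Cd}{2}\,\rho^{2-2/d}\beta(\tfrac dC p)$; pairing the right-hand side of \eqref{PDE_bessel} with $\varphi$ and integrating by parts once, respectively twice, then shows that the displayed identity is precisely the distributional formulation of \eqref{PDE_bessel} for $w=p$ with initial datum $\tfrac Cd\tilde v_0$. Hence $p$ solves \eqref{PDE_bessel} in the sense of distributions.

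\textbf{Proof of (ii).} Write $S_t=f(Y_t)$ with $f(y):=|y|^d=(\textstyle\sum_i y_i^2)^{d/2}$. For $d\ge2$ the map $f$ is of class $C^2$ on $\R^d$, with $\partial_i f(y)=d|y|^{d-2}y_i$ and $\sum_i\partial_{ii}f(y)=d(d-2)|y|^{d-2}+d^2|y|^{d-2}=2(d^2-d)|y|^{d-2}$, so It\^o's formula applies directly to $f(Y_t)$, for $Y$ a solution of \eqref{NLSDE_D}. Using $dY_t^i=\Phi(v(t,Y_t))\,dW_t^i$ and $d[Y^i,Y^j]_t=\delta_{ij}\Phi^2(v(t,Y_t))\,dt$ this gives
\[
dS_t=\underbrace{d|Y_t|^{d-2}\sum_i Y_t^i\,\Phi(v(t,Y_t))\,dW_t^i}_{=:dM_t}+(d^2-d)|Y_t|^{d-2}\,\Phi^2(v(t,Y_t))\,dt,\qquad d[M]_t=d^2|Y_t|^{2d-2}\,\Phi^2(v(t,Y_t))\,dt.
\]
By Proposition 5.4 of \cite{cuvelier2} one has $v(t,x)=\tilde v(t,|x|^d)$, hence $v(t,Y_t)=\tilde v(t,S_t)$, and by \eqref{nu_u_tild_rel} (Lemma 5.6 of \cite{cuvelier2}) $\tilde v(t,\cdot)=\tfrac dC\mathrm v(t,\cdot)$; combined with $|Y_t|^{d-2}=S_t^{1-2/d}$ and $|Y_t|^{2d-2}=S_t^{2-2/d}$ this identifies the drift of $S$ as $\Psi_1(S_t,\mathrm v(t,S_t))$ and gives $d[M]_t=\Psi_2^2(S_t,\mathrm v(t,S_t))\,dt$. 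Since $\Psi_2\ge0$, a standard representation of continuous local martingales (enlarging the probability space if necessary) yields a one-dimensional Brownian motion $B$ with $dM_t=\Psi_2(S_t,\mathrm v(t,S_t))\,dB_t$, so $S$ solves the SDE in \eqref{SDE_S_formelle} with $Z=S$, $p=\mathrm v$. That $\mathrm v(t,\cdot)$ is the law density of $S_t$ is Lemma 5.6 of \cite{cuvelier2}, and the initial condition follows from the same change of variables at $t=0$: since $Y_0\sim v_0(x)\,dx$ with $v_0(x)=\tilde v_0(|x|^d)$, passing to the radial variable (whose sphere factor is exactly the constant $C$ of \eqref{Const_Parite}) and then to $|Y_0|^d$ shows that $S_0$ has density $\tfrac Cd\tilde v_0$.

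\textbf{The main obstacle} is the behaviour at the origin. For $d\ge2$ it is in fact harmless: $y\mapsto|y|^d$ is genuinely $C^2$, the Bessel-type weights $\rho\mapsto\rho^{1-1/d}$ and $\rho\mapsto\rho^{1-2/d}$ extend continuously to $\rho=0$, and the non-degenerate process $Y$ spends zero Lebesgue time at $0$, so no local-time term arises; what remains is only the routine integrability needed to turn the stochastic integrals (and their localisations) into genuine martingales with vanishing expectation, for which the bounds on $\beta$ (hence on $\Phi$) assumed in this section, together with moment bounds on $Y$, suffice. The genuinely singular case is $d=1$, where $f(y)=|y|$ is not $C^2$: there one replaces It\^o's formula by Tanaka's formula and exploits the symmetry of the law of $Y$ to discard the local-time contribution, exactly as in the one-dimensional porous-media setting of Section \ref{sec:pdd}.
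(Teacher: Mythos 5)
Your argument is correct and is, as far as one can tell, the intended one: the review paper does not prove this statement but merely cites Proposition 5.10 of \cite{cuvelier2}, and your derivation — It\^o's formula plus the algebraic identity $\Phi^2(u)\,u=\beta(u)$ from \eqref{eq:Phi} for part (i), and It\^o's formula applied to $y\mapsto |y|^d$ together with the L\'evy/martingale-representation step and the change of variables $r\mapsto r^d$ (whose Jacobian produces the factor $C/d$) for part (ii) — is exactly the natural route. I checked the computations: $\Psi_1(\rho,p)\,p=(d-1)C\rho^{1-2/d}\beta(\tfrac dC p)$ and $\tfrac12\Psi_2^2(\rho,p)\,p=\tfrac{Cd}{2}\rho^{2-2/d}\beta(\tfrac dC p)$ do reproduce the weak form of \eqref{PDE_bessel}, and the drift and bracket of $S=|Y|^d$ match $\Psi_1$ and $\Psi_2^2$ after the substitutions $|Y_t|^{d-2}=S_t^{1-2/d}$, $|Y_t|^{2d-2}=S_t^{2-2/d}$ and $\tilde v=\tfrac dC\mathrm v$.

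One caveat on your closing remark about $d=1$: you cannot ``discard the local-time contribution by symmetry''. For a one-dimensional nondegenerate diffusion $Y$, Tanaka's formula gives $|Y_t|=|Y_0|+\int_0^t\mathrm{sgn}(Y_s)\,dY_s+L_t^0$, and the local time $L^0$ is a nontrivial nondecreasing process regardless of any symmetry of the law of $Y$; the process $|Y|$ is then a \emph{reflected} diffusion, which is not of the driftless form \eqref{SDE_S_formelle} (note $\Psi_1\equiv 0$ when $d=1$). The statement as cited comes from the multidimensional paper \cite{cuvelier2}, so the relevant regime is $d\ge 2$, where your observation that $|y|^d$ is genuinely $C^2$ makes the argument clean; it would be better to simply restrict to $d\ge2$ than to suggest a Tanaka-based fix for $d=1$.
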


Previous representation of radially symmetric porous media
type solutions was exploited in Section 8.\ of \cite{cuvelier2}
to simulate those solutions via Monte-Carlo methods and non-parametric
density estimates,
supposing the validity of the chaos propagation phenomenon.

\subsection{$p$-Laplace equation and $p$-Brownian motion}
\label{sec:pLap}

An important PDE appearing in the physical literature is the so called
$p$-{\it Laplace equation},  see e.g. \cite{Kamin, lindqvist}. 
There are two cases in the literature: $p > 2$ (degenerate case)
and $p < 2$ (singular). The case $p = 2$ corresponds to the
classical heat equation.
The recent remarkable contribution \cite{RehmeierPLap}
provides its probabilistic representation in the degenerate case.
The $p$-Laplace equation is
$$ \partial_t v(t,x) = \Delta_p v(t,x), \quad (t,x) \in (0,+\infty) \times \R^d,$$
where
$$ \Delta_p v := \text{div} (\vert \nabla v \vert^{p-2}\nabla v).$$
The corresponding McKean SDE
is
\begin{equation} \label{eq:p-BM}
  \left \{
\begin{array}{l}
  dX_t = \nabla (\vert \nabla v(t,X_t)\vert^{p-2}) dt
           + \sqrt{2} \vert \nabla v(t,X_t)\vert^\frac{p-2}{2} dW_t \\
  v(t,x) \text{ is the law density of } X_t, \quad \forall t> 0.
\end{array}
\right.
\end{equation}
This process is very much related to what the authors
call the $p$-{\it Brownian motion}, see Section 5.3 of
\cite{RehmeierPLap}.

\subsection{Conditional expectation dependence as McKean equations}\label{sc:con_exp}

An SDE which involves conditional  expectations can be often reformulated as a McKean type SDE. We illustrate here two such examples.

 A first example is given in \cite{bossy2011conditional} where the  McKean type SDE for $X_t=(V_t,U_t)\in \R^d$ is 
\begin{equation}\label{eq:bossy}
\left\{
\begin{array}{l}
V_t=V_0+\int_0^t U_s ds\\
U_t=U_0+\int_0^t B(V_s,U_s; \rho_s) ds+\int_0^t\sigma(s,V_s,U_s)ds\\
\rho_t \text{ is the law density  of } X_t=(V_t,U_t) \text{ for all }t\in[0,T],
\end{array}
\right.
\end{equation}
where $d=2n$  and for all $t\in[0,T]$ $V_t,U_t\in\R^n$. In \eqref{eq:bossy} $B(v,u; \rho)$ is defined in such a way that
replacing $\rho$ with the law of $(V_s,U_s)$ we get
\begin{equation*}
\displaystyle B(v,u;\rho) = b * \rho_{U_s \vert V_s = v}(u).
\end{equation*}
The main result in \cite{bossy2011conditional}  is Theorem 3.2 which shows uniqueness of a mild solution to the associated PDE,
see Section 4 of \cite{bossy2011conditional}. Then, in Section 5 of \cite{bossy2011conditional} existence
and uniqueness
of a martingale solution for
the particle system is proved.
The existence of a martingale solution for the limit
is established through a propagation of chaos result,
and consequently by a convergence in law result.
The model of equation \eqref{eq:bossy} falls into the class of \emph{kinetic} McKean-Vlasov SDEs, which will be discussed in detail in Section \ref{sc:sing_deg_McK}.

  A second example is a byproduct of the so called {\it Markovian projection}.
Let $(\sigma_t)$ be a progressively measurable matrix-valued
process such that $a = \sigma \sigma^\top$ is non-degenerate and $(b_t)$ a vector-valued also progressively measurable process, 
and consider the It\^o process
$$ \xi_t = X_0 + \int_0^t \sigma_s dW_s + \int_0^t b_s ds. $$
In \cite{gyongy} the author shows the existence of Borel functions $\bar \sigma, \bar b$ defined on $[0,T]\times \R^d$
such that the Markov diffusion process
$$ X_t = X_0 + \int_0^t \bar \sigma(s,X_s) dW_s +  \int_0^t \bar b(s,X_s) ds, $$
has the same marginals as the process $\xi$ and it holds
$$\bar  a(s,x) = \E[a_s \vert X_s = x], \ \bar b(s,x) = \E [b_s \vert X_s = x],$$
with $\bar \sigma \bar \sigma^\top = \bar a$. Notice that the equation for $X$ is of McKean-Vlasov type, since its coefficients involve conditional expectations. 
Having now a McKean-Vlasov type equation, one could use propagation of chaos techniques and particle systems to approximate  it.

The idea of Markovian projection  has several applications in
mathematical finance, among which the celebrated Dupire formula, illustrated below. Let us recall the
{\em calibrated local stochastic volatility models},
see e.g.\ Chapter 11 of \cite{labordere} or \cite{djete} for more recent developments, which take the McKean  type form
\begin{equation} \label{eq:Labo1123}
df_t = f_t \sigma(t,f_t,\shl_{(f_t, a_t)}) a_t dW_t.
  \end{equation}
 Here the couple $(f_t,a_t) $ corresponds to  stock price and its volatility, $\shl_{(f_t, a_t)}$ is the law of the couple,
$(t, x,\pi) \mapsto   \sigma(t,x, \pi)$ is a law-dependent
functional defined in a way that
  $$ \sigma(t,f_t,\shl_{(f_t, a_t)}) = \frac{\sigma_{\text{Dup}}(t,f_t)}
  {\sqrt{\E(a_t^2 \vert f_t}) },$$
where the local volatility function $\sigma_{\text{Dup}}$ is the one
defined in \cite{dupire}. In the special case when $\pi$ admits a density then
$$ \sigma(t,x,\pi) = \frac{\sigma_{\text{Dup}}(t,x)}
{\sqrt{\frac{\int a^2 \pi(x,a) da}{\int \pi(x,a) da}}}.$$

\subsection{Time-reversal of diffusions as McKean SDE}

\label{sc:time-rev} 
  
Let $X = (X_t, t \in [0,T])$ be
 a  diffusion process in $\mathbb R^d$,  solution of the SDE
\begin{equation}
\label{eq:X} 
X_t=X_0+\int_0^t b(s,X_s)ds+\int^{t}_{0}\sigma(s,X_s)dW_s, \ t \in [0,T],
\end{equation}
where $\sigma$ and $b$ are Lipschitz coefficients  with linear growth
and $W$ is a standard Brownian motion on $\R^d$.
Let $\mu$ be the law of $X_T$.
In Section 4.\ of \cite{Izydorczyk}, the authors focus on the particular McKean SDE (with singular density
dependence)
\begin{equation}\label{MKIntro}
\begin{cases}
\displaystyle Y_t = Y_0 - \int^{t}_{0}b\left(T-r,Y_r\right)dr +
  \int^{t}_{0}\left\{\frac{\mathop{\textrm{div}_y}\left(a_{i.}\left(T-r,Y_r\right)v_{r} \left(Y_r\right)\right)}{v_{r}\left(Y_r\right)}\right\}_{i\in[\![1,d]\!]}dr + \int^{t}_{0} \sigma\left(T-r,Y_r\right)d\beta_r 
 \\
v_t  \text{ is the density  of } \shl_{Y_t}^\P, t \in (0,T)
\\
Y_0 \sim  \mu,
\end{cases}
\end{equation}
where $\beta$ is a $d$-dimensional Brownian motion and $a = \sigma \sigma^\top$.
The unknown of \eqref{MKIntro} is the couple $(Y,v)$. 
It is well-known that, under suitable conditions, the time-reversed process $\hat X$, given by $\hat X_t = X_{T-t}$ is a diffusion process, whose coefficients depend explicitely on the marginal laws of the process $\hat X$, see \cite{haussmann_pardoux}. Indeed a solution $(Y,v)$ of the McKean equation \eqref{MKIntro} is given considering
  $Y = \hat X$ and the law density $v(t,\cdot)$ of $Y_t$.
The delicate question concerns uniqueness for \eqref{MKIntro}, which would allow
to reconstruct the time-reversal of $X$ starting from its law at $T$.
Under suitable conditions, this was the object of
\cite{Izydorczyk}, where the authors have connected $Y$ with solution of a 
{\em linear} Fokker-Planck equation with {\em terminal} condition.
In particular, they first used the fact, that given any solution $(Y,v)$ of \eqref{MKIntro},
$w_t = v_{T-t}$ is a solution to
\begin{equation} \label{EDPTerm0}
\left \{
\begin{array}{l}
\partial_t w = \frac{1}{2} 
\displaystyle{\sum_{i,j=1}^d} \partial_{ij} \left( (\sigma \sigma^{\top})_{i,j}(t,x) w \right) - div \left( b(t,x) w \right)\\
w(T) = \mu.
\end{array}
\right .
\end{equation}
In other words  $(Y, v)$ is a probabilistic representation of a solution of \eqref{EDPTerm0}.
One essential tool was the proof of uniqueness for the PDE \eqref{EDPTerm0}, see Section 3. of \cite{Izydorczyk}.
Those problems were naturally motivated by applications to inverse problems in physical sciences,
in particular the determination of the initial position of a diffusion
phenomenon, see e.g.\ \cite{tikhonov1977solutions} and \cite{lattes1969method}.
We also mention that time-reversal of diffusions is used in artificial intelligence, in particular in artificial content generation, see e.g.\ \cite{song}.

Another potential application of the McKean SDE \eqref{MKIntro} is related to the  probabilistic representation
of a semilinear Kolmogorov type PDE, e.g.\ a Hamilton-Jacobi-Bellman (HJB) equation related to stochastic control.
Classically that HJB PDE is probabilistically represented by a forward-backward SDE in the sense
of Pardoux and Peng, see \cite{PardouxPeng92}, where the forward process $X$ is often a diffusion of the type
\eqref{eq:X}.
Following the idea of time-reversal,
the resolution of the McKean backward SDE \eqref{MKIntro} helps to ``discover'' the forward diffusion $X$
from the terminal condition; this, coupled with the backward SDE, provides a fully backward representation
of the semilinear Kolmogorov PDE, which, in the case of an HJB PDE, helps the process $Y$ to remain ``close'' to the  optimal controlled process,
   especially in view of Monte-Carlo numerical simulations. 
   In \cite{LucasFully}, the authors implement previous idea in
   the simplified framework 
when the forward diffusion is an Ornstein-Uhlenbeck process.

\subsection{Boltzmann type models and Enskog equation}
\label{sec:Boltzmann}

Fokker-Planck type PDEs or integro PDE appear also as generalisations
of Boltzmann type equations. The first paper introducing
a sort of McKean-Vlasov type equation is at our knowledge
\cite{tanaka}, which provides a probabilistic representation
of a so called Boltzmann equation of Maxwellian molecules,
under the inspiration of previous contributions by Kac and McKean.
Those processes are in the class of McKean type pure jump processes.
Later, a more elaborated contribution appears in \cite{montecatini}
and many others, see e.g.\ \cite{ruediger}, which works out
a probabilistic representation of the Boltzmann-Enskog equation
which is a non-homogeneous version of the Boltzmann equation.
A significant contribution to solving a McKean-Vlasov type equation
associated to a generalised Boltzmann equation  is
\cite{BallyAlfonsi}, which makes use of the deterministic sewing lemma,
see e.g.\ \cite{FeyelDeLaPradelle1, gubinelli}.

\section{Singular McKean-Vlasov SDEs: distributional drift}
\label{sc:sing_McK}

This section  is devoted to an important class of singular McKean  SDEs, where some of the coefficients are generalised functions. In particular, we consider
\begin{equation}\label{eq:MK_dist_SDE_intro}
\left\{
\begin{array}{l}
X_t=X_0+\int_0^t F(v(s,X_s))b(s,X_s)ds+W_t\\
v(t,\cdot) \text{ is the law density of } X_t\\
X_0\sim \nu,
\end{array}
\right.
\end{equation}
where $W$ is a $d$-dimensional Brownian motion, $F$ is a non-linear function, $b$ is a suitable distribution, and $\nu$ is a given initial law. 
Since the drift coefficient is a product of a function $F\circ v$ with a distribution $b$, the SDE in \eqref{eq:MK_dist_SDE} is only formal at this level. Indeed, the drift coefficient $F(v)b$ is a well-defined element in a space of distributions provided that the function $F\circ v$ is regular enough.

Our aim is to present a way to solve these kind of singular equations.
These are critically different from singular equations reviewed in Section \ref{sec:4},
because the presence of distributional coefficients
makes their definition unclear a priori.
We will present in a unified framework the works \cite{issoglio_russoMK, issoglio_russoPDEa, issoglio_russoMPb, issoglio_et.al24} and provide specific
results proven in their setting in order to illustrate in a detailed way the general roadmap.
Other SDEs involving distributional coefficients, including the convolutional-type drift  $b\ast v$,  were investigated by many other authors,
see e.g.\ \cite{chaudru_jabir_menozzi22, chaudru_jabir_menozzi23, hao2021singular, veretennikov2023weak,  zhang2024cauchy, GaleatiGerencser25, mayorcas, zhang2021second} and we will highlight links and similarities  of each single paper when relevant.

The path to well-posedness of equation \eqref{eq:MK_dist_SDE_intro}  culminates in Section \ref{sec:McKean-point} and it  is quite involved, hence resulting  in several subsections containing analytical and probabilistic tools, definitions and methodologies. 
To help the reader navigate in this section, we illustrate the relationship between the subsections in Figure \ref{fig1}.
To characterise a notion of solution to equation \eqref{eq:MK_dist_SDE_intro} one needs to define a solution of a non-McKean SDE with distributional drift first.
In Section \ref{sec:sol-singular-SDE} we propose a definition based on the notion
of rough martingale problem, with related well-posedness in Section \ref{sec:linearSDE},
while in Section \ref{sec:solution_sing_SDE} we focus on
a concept which is closer to the classical notion of stochastic differential
equation. 
The rough martingale problem definition relies on  solutions to the Kolmogorov PDE (Section \ref{ssc:singularPDE}).
To make the link between the rough martingale problem and the McKean rough martingale problem one uses the  solution of the Fokker-Planck PDE (Section \ref{sec:singularFP}). We remark that the McKean rough martingale problem is somehow equivalent
to the non-linear Fokker-Planck PDE.
Finally, this methodology can be applied to a degenerate singular McKean SDE (Section \ref{sc:sing_deg_McK}).

\begin{figure}[h]
\begin{center}
\begin{tikzpicture}[every node/.style={draw, align=center}]
\node (s1) at (4,0.5) {On the dynamics \\ of the singular SDEs \\ Section~\ref{sec:solution_sing_SDE}};
\node (s2) at (0,2) {McKean rough  martingale problem \\ Section~\ref{sec:McKean-point}};
\node (s3) at (-4,-2) {non-linear  Fokker-Planck PDE \\ Section~\ref{sec:singularFP}};
\node (s4) at (4,-2) {rough martingale problem \\ Section~\ref{sec:sol-singular-SDE} and \ref{sec:linearSDE}};
\node (s5) at (4,-4) {Kolmogorov PDE \\ Section~\ref{ssc:singularPDE}};

\path
    (s1) edge [dashed] (s4)
    (s3) edge[<->, double distance=1pt, >=latex'] (s2)
    (s2) edge[<-, double distance=1pt, >=latex'] (s4)
    (s4) edge[<-, double distance=1pt, >=latex'] (s5);
\end{tikzpicture}
\end{center}
\caption{Different connections about subsections.}
\label{fig1}
\end{figure}
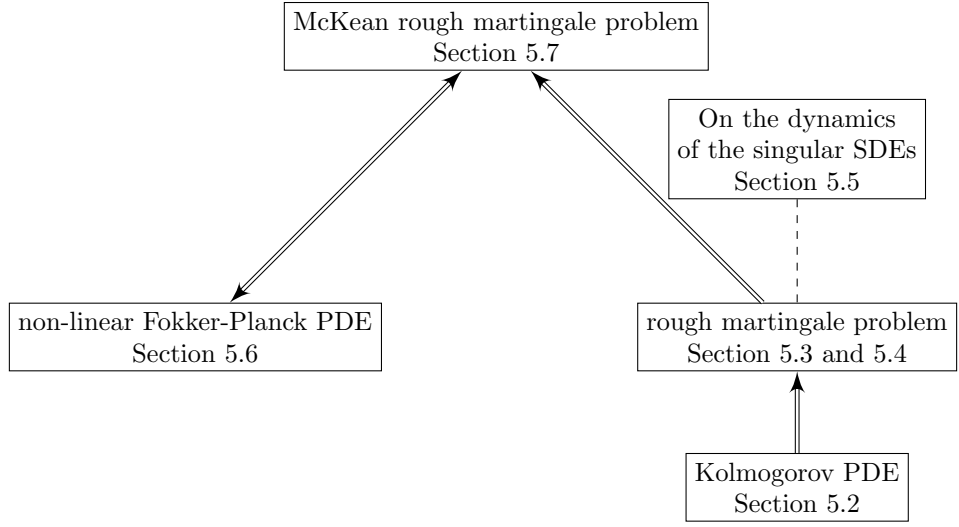

\subsection{Topological preliminaries}\label{sec:top-prelim}

From now on  $\mathcal S=\mathcal S(\mathbb R^d )$  will be the space of real-valued Schwartz functions on $\mathbb R^d$
and $\mathcal S'=\mathcal S'(\mathbb R^d )$ the space of Schwartz distributions. The corresponding dual pairing will be denoted by
$\langle \cdot, \cdot \rangle$ and $\shf: \shs \to \shs$ will be the Fourier transform on $\shs$, extended in the standard way to $\shs'$,
see e.g. Section 1.2.3 of \cite{sawano}. 

\subsubsection*{Besov spaces}\label{sc:Besov_spaces}
We adopt the notations of \cite{sawano} and we start with a very useful one.
 \begin{notation}\label{def:fourierm}
   Let us fix a function $\tau\in\shs(\R^d)$.  For all $f\in\shs'(\R^d)$ and for all $x\in\R^d$
   we introduce the so called Fourier Multiplier of $f$, as
 \begin{equation}
   \tau(D)f(x):=\shf^{-1}(\tau\cdot\shf f)(x)=(2\pi)^{-\frac{d}{2}}\langle f,\shf^{-1}\tau(x-\cdot)\rangle.
  \end{equation}
\end{notation}
In fact the operator $f \mapsto \tau(D)f$ maps $\shs'(\R^d)$ into  $\shs'(\R^d) \cap L_{loc}^1(\R^d)$.
 Using Notation \ref{def:fourierm} we introduce the Besov Spaces, see e.g.\ Section 2.1 of \cite{sawano}.
 \begin{definition}\label{def:Besov}
   Let $\g\in\R$ and $p,q\in[1,\infty]$.
   \begin{enumerate}
     \item
      Let us consider the dyadic partition of the unity, constituted by a sequence of functions $(\varphi_j)_{j\geq 0}$ and a $\psi$, both defined
   on $\R^d$ with compact support,
introduced in  Definition 2.1 of \cite{sawano}.
The Besov Space $B_{p,q}^{\g}(\R^d)$ is the set of  $f \in \shs'(\R^d)$ such that the norm 
   \begin{equation}
    \|f\|_{B_{p,q}^{\g}(\R^d)}:=\|\psi(D)f\|_{L^p}+\bigg(\sum_{j=-1}^{\infty}2^{q\g j}\|\varphi_j(D)f\|^q_{L^p}\bigg)^{\frac{1}{q}}
  \end{equation}
   is finite. 
\item 
  For $\gamma\in\mathbb R$ and $p=q=\infty$ we denote by  $\mathcal C^\gamma = \mathcal C^\gamma(\mathbb R^d):=B_{\infty,\infty}^{\g}(\R^d)$ the Besov space,
  endowed with its norm $\|\cdot\|_{B_{\infty,\infty}^{\g}}=:\|\cdot\|_{\gamma}$.
  For more details see Section 2.7, page 99 of
 \cite{bahouri2011fourier} and also \cite{issoglio_russoPDEa}, where useful facts and definitions about these spaces are recalled.
 \end{enumerate}
\end{definition}

\begin{remark} \label{rmk:classHold}
The definition of the norm, and therefore of the space, is independent of the partition of unity. 
  If $\gamma \in \R^+ \setminus \mathbb N$ then the space $\mathcal C^\gamma $ coincides with the H\"older-Zygmund space, see e.g. Section 2.2.2
  of \cite{sawano}. In the case $0 < \gamma < 1$ it coincides with the classical H\"older space. 
  We have $\mathcal C^\gamma \subset \mathcal C^\alpha$ for any $\gamma >\alpha$. Moreover it holds that $L^\infty \subset \mathcal C^0$, see \cite{issoglio_et.al24} for a proof in the case of anisotropic Besov spaces. 
\end{remark}
We denote by $C_T \mathcal C^\gamma$ the Banach space of continuous functions on $[0,T]$ taking values in $\mathcal C^\gamma$, that is $C_T \mathcal C^\gamma:= C([0,T]; \mathcal C^\gamma)$, equipped with the sup norm.
We will also make use of an equivalent norm given by 
 \begin{equation}\label{eq:rho_norm}
 \|w \|^{(\rho)}_{C_T\mathcal C^\alpha} := \sup_{t\in[0,T]}  e^{-\rho t} \|w(t) \|_{\alpha}.
\end{equation}
Consider then the $\rho$-ball in  $C_T \mathcal C^\alpha$ of radius $M$, given by 
\begin{equation}\label{eq:rho_ball}
E^\alpha_{\rho, M} := \{ v\in  C_T \mathcal C^\alpha : \|v \|^{(\rho)}_{C_T\mathcal C^\alpha} \leq M \}.
\end{equation}
The $\rho$-equivalent norm generates  the $\rho$-equivalent metric with respect to the metric of $C_T\mathcal C^\alpha$, given by 
\begin{equation}\label{eq:drho}
d_{\rho} (w,z):=   \|w - z\|^{(\rho)}_{C_T\mathcal C^\alpha} , \qquad \forall \rho\geq0,
\end{equation}
for any $w,z \in  C_T\mathcal C^\alpha$,
see (3.7) and (3.8) of \cite{issoglio_russoMK}.
The sets  $ E^\alpha_{\rho, M}$ are closed with respect to the topology of $C_T\mathcal C^\alpha$, hence they are
F-spaces\footnote{for the topological notion of F-space, see  Chapter 2.1 of \cite{dunford-schwartz}},
with respect to the metric topology of $C_T\mathcal C^\alpha$.
Let $\rho_0>0$ and $ M_0>0$  be chosen arbitrarily. The F-space $E^\alpha_{\rho_0,M_0}$ 
can be equivalently equipped with the metric
$d_{\rho} $.

For any given $\gamma\in \R$ we denote by $\mathcal C^{\gamma+}$ and $\mathcal C^{\gamma-}$  the spaces given by
\[
\mathcal C^{\gamma+}:= \cup_{\alpha >\gamma} \mathcal C^{\alpha} ,  \qquad  
\mathcal C^{\gamma-}:= \cap_{\alpha <\gamma} \mathcal C^{\alpha}.
\]
Notice that $\mathcal C^{\gamma+}$ is an inductive space. 
We will also use the spaces $C_T C^{\gamma+}:=C([0,T]; \mathcal C^{\gamma+})$, which is equivalent to the fact that for
$f\in C_T C^{\gamma+} $ there exists $\alpha>\gamma $ such that $f\in C_T \shc^{\alpha}$,  see for example Appendix B in \cite{issoglio_russoMK}. 
Similarly, we use the space   $C_T \shc^{\gamma-}:=C([0,T]; \mathcal C^{\gamma-})$, meaning that if $f\in C_T \mathcal C^{\gamma-} $ then for any $\alpha<\gamma $ we have $f\in C_T \mathcal C^{\alpha}$.  
We denote by $\mathcal C_c^{\gamma}=\mathcal C_c^{\gamma}(\R^d)$ the space of elements in $\mathcal C^{\gamma}$ with compact support, and by $\mathcal C_c^{\gamma+}:=  \cup_{\alpha >\gamma} \mathcal C_c^{\alpha}$.
Note that if $f$ is continuous and such that $\nabla f \in C_T \mathcal C^{0+}$ then $f\in C^{0,1}$.

\subsubsection*{Schauder Estimates and Pointwise product}\label{sc:Schauderpoint}

In this short subsection we are interested in the action of the heat semigroup $P_t$ on elements of Besov spaces $\C^\gamma$. These estimates are known as \emph{Schauder's estimates}: for a proof we refer to Lemma 2.5 in \cite{catellier_chouk}, see also \cite{gubinelli_imkeller_perkowski}
for similar results.
Here  $c$ will denote a constant which can vary from line to line.
\begin{lemma}[Schauder's estimates]\label{lm:schauder}
  Let $\theta\geq 0$ and $\gamma \in \R$.
  \begin{enumerate}
  \item There exists a constant $c = c(\theta)$ such that
\begin{equation}\label{eq:Pt}
\|P_t f\|_{\gamma + 2 \theta} \leq c t^{-\theta} \|f\|_\gamma,
\end{equation}  
for all $t>0$.
\item Suppose furthermore that  $\theta\in(0,1)$. There exists a constant  $c$ such that
for every $f\in \mathcal C^{\gamma + 2\theta }$  we have
\begin{equation}\label{eq:Pt-I}
\|P_t f-f\|_{\gamma} \leq c t^{\theta} \|f\|_{\gamma+2\theta }.
\end{equation}
\end{enumerate}
\end{lemma}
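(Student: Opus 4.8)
The plan is to reduce everything to the Littlewood--Paley characterization of the Besov--Hölder norm $\|\cdot\|_\gamma = \|\cdot\|_{B^\gamma_{\infty,\infty}}$ and to exploit the explicit Fourier-multiplier structure of the heat semigroup, $P_t f = \mathcal F^{-1}(e^{-t|\cdot|^2}\mathcal F f)$. Recall from Definition \ref{def:Besov} that $\|f\|_\gamma \simeq \sup_{j\geq -1} 2^{\gamma j}\|\Delta_j f\|_{L^\infty}$, where I write $\Delta_j := \varphi_j(D)$ for $j\geq 0$ and $\Delta_{-1} := \psi(D)$ for the low-frequency block. The key localization fact is that $\Delta_j$ is supported (in Fourier) in an annulus $\{|\xi|\sim 2^j\}$ (a ball for $j=-1$), so on the support of the $j$-th block the symbol $e^{-t|\xi|^2}$ and its derivatives are controlled by $2^j$ and $t$.

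\emph{Step 1 (Bernstein-type multiplier lemma).} I would first record the elementary heat-kernel bound: for each $j$, $\|\Delta_j P_t f\|_{L^\infty} = \|\mathcal F^{-1}(e^{-t|\cdot|^2}\varphi_j)\ast \Delta_j f\|_{L^\infty} \le \|\mathcal F^{-1}(e^{-t|\cdot|^2}\varphi_j)\|_{L^1}\,\|\Delta_j f\|_{L^\infty}$, and then estimate the $L^1$ norm of that kernel. By scaling $\xi = 2^j\eta$ and integrating by parts against the Schwartz cutoff one gets $\|\mathcal F^{-1}(e^{-t|\cdot|^2}\varphi_j)\|_{L^1}\le c\, e^{-c' t 2^{2j}}$ for $j\ge 0$ (and $\le c$ for $j=-1$). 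This is the standard ``smoothing estimate'' and is the only place real computation enters.

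\emph{Step 2 (Proof of \eqref{eq:Pt}).} For the first inequality, for each $j$,
\[
2^{(\gamma+2\theta)j}\|\Delta_j P_t f\|_{L^\infty} \le c\, 2^{2\theta j} e^{-c' t 2^{2j}}\, 2^{\gamma j}\|\Delta_j f\|_{L^\infty} \le c\,\big(\sup_{j} 2^{2\theta j} e^{-c' t 2^{2j}}\big)\,\|f\|_\gamma .
\]
Setting $x = t 2^{2j}$, the supremum over $j\ge 0$ of $2^{2\theta j}e^{-c't2^{2j}} = t^{-\theta}(t2^{2j})^{\theta}e^{-c't2^{2j}} \le t^{-\theta}\sup_{x>0} x^\theta e^{-c'x} = c\, t^{-\theta}$; the $j=-1$ term is bounded by $c\le c t^{-\theta}$ for $t$ bounded, and by absorbing constants one gets the claim for all $t>0$ (for $t$ large one may simply use $2^{2\theta j}e^{-c't2^{2j}}\le c t^{-\theta}$ directly). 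Taking $\sup_j$ yields $\|P_t f\|_{\gamma+2\theta}\le c t^{-\theta}\|f\|_\gamma$.

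\emph{Step 3 (Proof of \eqref{eq:Pt-I}).} For the second inequality write $\Delta_j(P_t f - f) = \mathcal F^{-1}\big((e^{-t|\cdot|^2}-1)\varphi_j\big)\ast \Delta_j f$. On the annulus $|\xi|\sim 2^j$ one has $|e^{-t|\xi|^2}-1|\le (t|\xi|^2)^\theta \le c\,(t2^{2j})^\theta$ for $\theta\in(0,1)$ (using $|e^{-s}-1|\le s^\theta$ for $s\ge 0$, $\theta\le 1$), and the same bound survives after taking the $L^1$ norm of the inverse Fourier transform (again a scaling + integration-by-parts argument). Hence
\[
2^{\gamma j}\|\Delta_j(P_t f - f)\|_{L^\infty} \le c\, (t 2^{2j})^\theta\, 2^{\gamma j}\|\Delta_j f\|_{L^\infty} = c\, t^\theta\, 2^{(\gamma+2\theta)j}\|\Delta_j f\|_{L^\infty} \le c\, t^\theta \|f\|_{\gamma+2\theta},
\]
and taking $\sup_j$ (the $j=-1$ block handled the same way, or trivially since $e^{-t|\xi|^2}-1$ vanishes at $\xi=0$) gives $\|P_t f - f\|_\gamma \le c t^\theta \|f\|_{\gamma+2\theta}$.

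The main (and essentially only) obstacle is the clean justification of the $L^1$-kernel bounds in Steps 1 and 3 — i.e.\ that $\|\mathcal F^{-1}(m(\cdot)\varphi_j)\|_{L^1}$ is controlled by $\sup$-norms of finitely many derivatives of the symbol $m$ on the relevant annulus, uniformly after dyadic rescaling. This is a routine Bernstein/Mikhlin-type argument, and since the excerpt already points to Lemma 2.5 of \cite{catellier_chouk} and \cite{gubinelli_imkeller_perkowski} for exactly these estimates, one could alternatively just cite it; the computation above is the self-contained version.
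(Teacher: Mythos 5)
The paper does not actually prove this lemma; it cites Lemma 2.5 of \cite{catellier_chouk} and \cite{gubinelli_imkeller_perkowski}. Your Littlewood--Paley argument is precisely the standard proof that underlies those references, and it is essentially sound: the reduction to the $L^1$ bound on $\mathcal F^{-1}(m\,\varphi_j)$ for the symbols $m=e^{-t|\cdot|^2}$ and $m=e^{-t|\cdot|^2}-1$, followed by optimising $2^{2\theta j}e^{-c't2^{2j}}$ resp.\ bounding $|e^{-t|\xi|^2}-1|\le (t|\xi|^2)^\theta$ block by block, is exactly the right mechanism. Two small points deserve care. First, a notational one: $\Delta_j P_t f$ is not literally $\mathcal F^{-1}(e^{-t|\cdot|^2}\varphi_j)\ast\Delta_j f$ (that would square the cutoff); you need a fattened cutoff $\tilde\varphi_j$ equal to $1$ on $\operatorname{supp}\varphi_j$ so that $\Delta_j P_t f=\mathcal F^{-1}(e^{-t|\cdot|^2}\tilde\varphi_j)\ast\Delta_j f$ --- harmless, but it should be said. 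Second, and more substantively, you correctly observe that the low-frequency block $\psi(D)$ only yields a constant, not $ct^{-\theta}$; your parenthetical fix for large $t$ works for the blocks $j\ge0$ but cannot work for $j=-1$ (take $f\equiv1$: then $P_tf=f$ and $\|P_tf\|_{\gamma+2\theta}$ is a fixed positive constant, so \eqref{eq:Pt} with $\theta>0$ fails as $t\to\infty$). This is an imprecision of the statement rather than of your proof --- the estimate is standardly asserted for $t\in(0,T]$ with $c$ depending on $T$, or in the form $c(1+t^{-\theta})\|f\|_\gamma$ --- and it is harmless here since the paper only ever uses $t\in[0,T]$, but your write-up should state the restriction rather than claim the bound for all $t>0$.
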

Notice that from \eqref{eq:Pt-I} it readily follows that if $f\in \mathcal C^{\gamma + 2 \theta}$ for some $0<\theta<1$, then for $t>s>0$ we have 
\begin{equation}\label{eq:PcontC}
\|P_tf-P_s f\|_{\gamma}\leq c (t-s)^{\theta} \|f\|_{\gamma+ 2\theta }.
\end{equation}
In particular, if $f\in  \mathcal C^{\gamma+2\theta}$ then $P_\cdot f\in C_T \mathcal C^\gamma$ and it is $\theta$-H\"older continuous in time. 
We also recall that Bernstein's inequalities hold (see Lemma 2.1 in \cite{bahouri2011fourier} and Appendix A.1 in \cite{gubinelli_imkeller_perkowski}), that is for $\gamma \in \mathbb R$ there exists a constant $c>0$ such that
\begin{equation}\label{eq:nabla}
\|\nabla g\|_{\gamma} \leq c  \|g\|_{\gamma+1},
\end{equation}  
for all $g\in \mathcal C^{1+\gamma}$. 
Using Schauder's and Bernstein's inequalities we can easily obtain a useful estimate on the gradient of the semigroup, as we see below.

\begin{lemma}\label{lm:nablaP}
Let $\gamma\in \mathbb R$ and $\theta \in (0,1)$. If $g\in \mathcal C^\gamma$ then for all $t>0$ we have $\nabla (P_tg) \in \mathcal C^{\gamma +2\theta -1}$ and
\begin{equation}\label{eq:nablaP}
\|\nabla (P_tg)\|_{\gamma+2\theta-1} \leq c t^{-\theta}  \|g\|_{\gamma}.
\end{equation}  
\end{lemma}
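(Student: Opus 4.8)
The plan is to obtain the estimate by chaining together the two ingredients already recorded just above the statement: Bernstein's inequality \eqref{eq:nabla} and the first Schauder estimate \eqref{eq:Pt}. The point is that $\nabla$ costs one derivative of regularity, while the heat semigroup $P_t$ for $t>0$ gains any prescribed amount of regularity $2\theta$ at the price of a factor $t^{-\theta}$; balancing these two effects yields the claimed exponent $\gamma+2\theta-1$ and the claimed singularity $t^{-\theta}$.

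Concretely, I would first invoke \eqref{eq:Pt} with exponent $\gamma$ and regularity gain $2\theta$: since $\theta\ge 0$, for every $t>0$ the operator $P_t$ maps $\mathcal C^\gamma$ into $\mathcal C^{\gamma+2\theta}$ and
\[
\|P_t g\|_{\gamma+2\theta}\le c\,t^{-\theta}\|g\|_{\gamma}.
\]
In particular $P_t g\in\mathcal C^{\gamma+2\theta}=\mathcal C^{1+(\gamma+2\theta-1)}$, so Bernstein's inequality \eqref{eq:nabla} applies to $P_t g$ with parameter $\gamma+2\theta-1$, giving $\nabla(P_t g)\in\mathcal C^{\gamma+2\theta-1}$ together with
\[
\|\nabla(P_t g)\|_{\gamma+2\theta-1}\le c\,\|P_t g\|_{\gamma+2\theta}.
\]
Combining the two displays yields $\|\nabla(P_t g)\|_{\gamma+2\theta-1}\le c\,t^{-\theta}\|g\|_{\gamma}$, which is exactly \eqref{eq:nablaP} (with $c$ the product of the two constants, still denoted $c$ in accordance with the convention stated before Lemma \ref{lm:schauder}).

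There is essentially no obstacle here: the only point requiring a word of care is the order in which the two inequalities are applied. One must first smooth with $P_t$ and only then differentiate, so that $\nabla(P_t g)$ is being controlled in a space of regularity strictly below that of $P_t g$, which is where Bernstein's inequality is valid; applying $\nabla$ first would require $g$ itself to have regularity $\gamma\ge 1$, which is not assumed. Since $\theta\in(0,1)$ guarantees $2\theta-1>-1$, the target exponent $\gamma+2\theta-1$ is genuinely a loss of less than one derivative relative to $\gamma+2\theta$, so the use of \eqref{eq:nabla} is legitimate for any real $\gamma$. No further hypotheses on $g$ beyond $g\in\mathcal C^\gamma$ are needed, and the argument is uniform in $t$ on compact subsets of $(0,\infty)$.
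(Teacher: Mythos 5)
Your proof is correct and follows exactly the route the paper intends: the lemma is stated immediately after the remark ``Using Schauder's and Bernstein's inequalities we can easily obtain a useful estimate on the gradient of the semigroup,'' and your chaining of \eqref{eq:Pt} (smoothing first) with \eqref{eq:nabla} (then differentiating) is that argument. Nothing further is needed.
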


The following is an important estimate which allows to define the so called
{\it pointwise product} between certain distributions and functions, which is based on Bony's estimates. For details see \cite{bony} or Section 2.1 of \cite{gubinelli_imkeller_perkowski}. Let   $f \in \mathcal C^\alpha$ and $g\in\mathcal C^{-\beta}$ with $\alpha-\beta>0$ and $\alpha,\beta>0$. Then the
{pointwise product} $ f \, g$ is well-defined as an element of $\mathcal C^{-\beta}$ and  there exists a constant $c>0$ such that 
\begin{equation}\label{eq:bony}
\| f \, g\|_{-\beta} \leq c \| f \|_\alpha \|g\|_{-\beta}.
\end{equation} 
Moreover if $f$ and $g$ are continuous functions defined on $[0,T]$ with values in the above Besov spaces,
one can easily show that the pointwise 
product is also continuous with values in  $\mathcal C^{-\beta}$, and 
\begin{equation}\label{eq:bonyt}
\| f \, g\|_{C_T\mathcal C^{-\beta}} \leq c \| f \|_{C_T \mathcal C^\alpha} \|g\|_{C_T \mathcal C^{-\beta}} .
\end{equation}

\subsection{The singular Kolmogorov PDE}
\label{ssc:singularPDE}

The singular Kolmogorov PDE is a basic tool for the formulation
of the domain of the rough martingale problem, see Section
\ref{sec:sol-singular-SDE}.
 For the sequel of this section we consider Let $0<\beta<\frac12$.
We start by an assumption on the singular drift $b$ which is standing for this subsection.
\begin{assumption}\label{ass:b}
 $b \in C_T \shc^{(-\beta)+}(\R^d)$. In particular $b \in C_T\shc^{-\beta}(\R^d)$.
\end{assumption}

 The Kolmogorov equation we consider is 
\begin{equation}\label{general_Kolmogorov_PDE}
\left\{
\begin{array}{l}
  \partial_t u(t,x)+\frac{1}{2}\Delta u(t,x) +\nabla u(t,x,)\cdot b(t,x)=g(t,x)\\
  u(T,\cdot)=u_T(\cdot),
  \end{array}
  \right.
\end{equation}
 where $g:[0,T]\to\shs '$ is continuous and the terminal condition $u_T\in\shs'$.
 
\begin{definition}\label{def:L}
The singular differential operator $L$ is defined as
\begin{equation*}
\begin{array}{cll}
L:&\shd_0&\to\{\shs'\text{-valued continuous functions defined on } [0,T] \}\\ 
      & f &\mapsto L f:= \partial_tf+\frac12\Delta f+\nabla f\cdot b,
\end{array}
\end{equation*}
where $\shd_0:=C_T \C^{1+\be}\cap C^1([0,T];\shs')$.
\end{definition}

\begin{definition}[Weak solution -- Definition 4.1 \cite{issoglio_russoPDEa}]\label{def:weak_sol_kol}
Given the above Kolmogorov PDE \eqref{general_Kolmogorov_PDE}, we say that $u\in C_T\C^{1+\be}$ is a weak solution to \eqref{general_Kolmogorov_PDE} if for all $\varphi\in\shs(\R^d)$
\begin{equation}
  \langle u_T, \varphi\rangle-\langle u(t),\varphi \rangle+ \int_t^T \frac12 \langle u(s) , \Delta \varphi \rangle ds + \int_t^T  \langle \nabla u(s) b(s) , \varphi \rangle ds =\int_t^T \langle g(s) ,  \varphi \rangle ds. 
\end{equation}
\end{definition}
We point out that assuming only $u\in C_T\C^{1+\be}$
in previous definition guarantees that $u$
is differentiable (with continuous derivative in $\shs'$) with respect to $t$,
therefore $u \in \shd_0$.

\begin{definition}[Mild solution -- Definition 4.2 \cite{issoglio_russoPDEa}]\label{def:mild_sol_kol}
A {\it mild solution} to \eqref{general_Kolmogorov_PDE} is a $u\in\C_T\C^{1+\be}$ such that 
\begin{equation}\label{eq:mild_sol_kol}
u(t)= P_{T-t}u_T+\int_t^T P_{s-t}(\nabla u(s)b(s))ds-\int_t^T P_{s-t}g(s)ds.
\end{equation}
\end{definition}

We remark that previous definition is meaningful taking into account Lemmata \ref{lm:nablaP} and \ref{lm:schauder}.
It can be shown that weak  and mild solutions for \eqref{general_Kolmogorov_PDE} are equivalent, see Proposition 4.5 in \cite{issoglio_russoPDEa}.
A well-posedness result also holds, we recall it below. 
\begin{theorem}[Existence and Uniqueness -- Theorem 4.7 in \cite{issoglio_russoPDEa}]\label{thm:u}
 Let $b$ fulfilling Assumption \ref{ass:b} and assume $u_T\in\C^{(2-\be)-}$ and $g\in C_T\C^{-\be} $(in particular $u_T$ is bounded).
Then there exists a mild solution $u\in C_T\C^{(2-\be)-}$ of PDE \eqref{general_Kolmogorov_PDE} which is bounded, and it is unique in $C_T \C^{1+\be}$.
\end{theorem}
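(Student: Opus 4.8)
The plan is to establish existence via a fixed point argument for the mild formulation \eqref{eq:mild_sol_kol}, and uniqueness via a Gr\"onwall-type argument using the $\rho$-equivalent norm \eqref{eq:rho_norm}. First I would set up the solution map: given $w \in C_T\C^{1+\be}$, define $\Gamma(w)(t) := P_{T-t}u_T + \int_t^T P_{s-t}(\nabla w(s)\,b(s))\,ds - \int_t^T P_{s-t}g(s)\,ds$. The target is to show $\Gamma$ is a contraction on a suitable $\rho$-ball $E^{1+\be}_{\rho,M}$ of $C_T\C^{1+\be}$ (or rather, to exploit that one can work in the slightly better space $C_T\C^{(2-\be)-}$ for the regularity claim). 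The key estimates are: (i) the pointwise product bound \eqref{eq:bony}, which gives $\|\nabla w(s)\,b(s)\|_{-\be} \leq c\|\nabla w(s)\|_{\alpha}\|b(s)\|_{-\be}$ for appropriate $\alpha > \be$, combined with Bernstein \eqref{eq:nabla} to bound $\|\nabla w(s)\|_\alpha \leq c\|w(s)\|_{1+\alpha}$; (ii) the Schauder estimate \eqref{eq:Pt} applied to $P_{s-t}$ acting on the $\C^{-\be}$-valued integrand, raising regularity by $2\theta$ at the cost of $(s-t)^{-\theta}$; and (iii) the gradient-semigroup estimate \eqref{eq:nablaP} for controlling $\nabla \Gamma(w)$.

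Concretely, I would pick $\theta$ so that $-\be + 2\theta$ reaches the target regularity $2-\be$ (or $1+\be$, whichever ball one contracts in), which forces $\theta < 1$ and the integrability $\int_t^T (s-t)^{-\theta}\,ds < \infty$; since we need $\theta$ strictly less than $1$ this is why the target space is $\C^{(2-\be)-}$ rather than $\C^{2-\be}$ — one loses an $\epsilon$. Then, for the contraction, estimate $\|\Gamma(w_1) - \Gamma(w_2)\|_{C_T\C^{1+\be}}$: the terms $P_{T-t}u_T$ and $\int_t^T P_{s-t}g(s)\,ds$ cancel, leaving $\int_t^T P_{s-t}(\nabla(w_1-w_2)(s)\,b(s))\,ds$, which by the above chain of estimates is bounded by $c\|b\|_{C_T\C^{-\be}} \int_t^T (s-t)^{-\theta}\|w_1(s)-w_2(s)\|_{1+\be}\,ds$. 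Passing to the $\rho$-norm \eqref{eq:rho_norm}, the factor $e^{-\rho t}\int_t^T (s-t)^{-\theta} e^{\rho s}\,ds = \int_0^{T-t} r^{-\theta} e^{\rho r}\,dr$ — this is not small for large $\rho$, so the standard $\rho$-trick via a discounting/damping term (as in the Zvonkin PDE \eqref{eq:Zvonkin_modif}) must be used, or alternatively one iterates $\Gamma$ finitely many times exploiting the $1/k!$-type gain from the singular convolution kernel $(s-t)^{-\theta}$ (a fractional Gr\"onwall / generalised Mittag-Leffler bound). I would use the latter: $\Gamma^N$ becomes a contraction for $N$ large, giving a unique fixed point; boundedness of $u$ follows from $u \in C_T\C^{(2-\be)-} \hookrightarrow C_T\C^0 \supset$ bounded functions (and the terminal datum $u_T \in \C^{(2-\be)-}$ being bounded, cf.\ Remark \ref{rmk:classHold}).

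For uniqueness in the larger space $C_T\C^{1+\be}$: if $u_1, u_2$ are two mild solutions in $C_T\C^{1+\be}$, their difference $\delta := u_1 - u_2$ satisfies $\delta(t) = \int_t^T P_{s-t}(\nabla\delta(s)\,b(s))\,ds$, and the same estimate gives $\|\delta(t)\|_{1+\be} \leq c\|b\|_{C_T\C^{-\be}}\int_t^T (s-t)^{-\theta}\|\delta(s)\|_{1+\be}\,ds$; the singular Gr\"onwall inequality (again using the $\sum (ct^{1-\theta})^k/\Gamma(k(1-\theta)+1)$ bound) forces $\delta \equiv 0$. I expect the main obstacle to be the bookkeeping around the regularity parameters: one must verify that there is a genuine window of $\theta \in (0,1)$ and $\alpha \in (\be, 2\theta - \be)$ (needing $\be < \theta - \be/\cdots$, hence ultimately $\be < 1/2$, which is exactly Assumption \ref{ass:b}) for which the pointwise product, the Schauder gain, and the integrability $\int (s-t)^{-\theta}\,ds$ are simultaneously satisfied, and that the fixed point found in $C_T\C^{(2-\be)-}$ indeed coincides with the a priori larger $C_T\C^{1+\be}$ for uniqueness purposes — this is where one invokes that $\C^{(2-\be)-} \subset \C^{1+\be}$ since $2-\be > 1+\be$ when $\be < 1/2$. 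The rest is routine given Lemmata \ref{lm:schauder}, \ref{lm:nablaP} and the product estimate \eqref{eq:bony}.
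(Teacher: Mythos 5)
Your proposal follows the same overall strategy as the paper: set up the mild-solution map $\Gamma$, show it maps $C_T\C^{1+\alpha}$ into itself for suitable $\alpha$ using the Bony product estimate \eqref{eq:bony}, Bernstein's inequality \eqref{eq:nabla} and the Schauder estimates of Lemma \ref{lm:schauder}, and then run a fixed-point argument; the parameter bookkeeping (the window $\alpha\in[\beta,1-\beta)$, the loss of an $\epsilon$ explaining the space $\C^{(2-\be)-}$, and the role of $\beta<\tfrac12$) matches the paper's account. The one place where you genuinely diverge is the final contraction step. The paper closes the argument with the equivalent weighted norm \eqref{eq:rho_norm}; you discard that route on the grounds that $e^{-\rho t}\int_t^T (s-t)^{-\theta}e^{\rho s}\,ds=\int_0^{T-t}r^{-\theta}e^{\rho r}\,dr$ blows up, and instead iterate $\Gamma$ to exploit the $1/\Gamma(k(1-\theta)+1)$ gain. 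Your alternative is perfectly valid (it is the standard Picard/fractional-Gr\"onwall argument and also delivers uniqueness in $C_T\C^{1+\be}$ exactly as you describe), but your dismissal of the weighted-norm route rests on an orientation error: the Kolmogorov PDE \eqref{general_Kolmogorov_PDE} is a \emph{terminal}-value problem, so the appropriate weight is $e^{-\rho(T-t)}$ (equivalently $e^{+\rho t}$), not the forward-oriented $e^{-\rho t}$ of \eqref{eq:rho_norm} taken literally. With that orientation the relevant factor becomes $\int_0^{T-t}r^{-\theta}e^{-\rho r}\,dr\le \Gamma(1-\theta)\,\rho^{\theta-1}\to 0$ as $\rho\to\infty$, so $\Gamma$ is a contraction in one step, which is what the paper (and the original reference) does. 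One further small imprecision: boundedness of $u$ does not follow from $\C^{(2-\be)-}\subset\C^0$ (membership in $\C^0$ does not imply boundedness, cf.\ Remark \ref{rmk:classHold}); it follows because $u(t)\in\C^{\gamma}$ for some $\gamma>0$, and such spaces are H\"older--Zygmund spaces of bounded functions. Neither issue affects the correctness of your argument.
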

This theorem has been proven in Theorem 4.7 in \cite{issoglio_russoPDEa}. In fact in the original reference the framework is more general, because the spaces used include unbounded functions. The idea of the proof is to show that the right-hand side of the mild formulation \eqref{eq:mild_sol_kol} is a mapping from  $ C_T\C^{1+\alpha}$ into itself,  for all $\alpha\in[\beta, 1-\beta)$. Subsequently, using Schauder estimates   Lemma \ref{lm:schauder}, Bernstein's inequality \eqref{eq:nabla} and the equivalent norm \eqref{eq:rho_norm} one concludes that in fact it is also a contraction on the same space.

\subsection{Rough martingale problem}\label{sec:sol-singular-SDE}

Let us start with the case  without law dependence. Indeed  the definition of solutions to McKean singular SDEs, given in  Section \ref{sec:McKean-point}, will heavily rely on  the definition of solution
introduced here, which is formulated as rough martingale problem. In the literature one can find other notions of solutions to singular SDEs, and for completeness we will review them in Section \ref{sec:solution_sing_SDE} below. 

We consider singular $d$-dimensional SDEs formally of the type
\begin{equation}\label{eq:SingSDE}
X_t =X_0+ \int_0^t  b(s, X_s) ds +  W_t,
\end{equation}
where  the  drift $b$ is a generalised function in the $x$-variable and the initial condition $X_0$ is distributed according to a given probability measure $\nu$. 
Here we restrict to the case where $b\in C_T \C^{(-\beta)+}$ with $\beta\in(0,\frac12)$ and $W$ is a $d$-dimensional Brownian motion, but it is possible to consider different noises such as
$\alpha$-stable noise and different distributional spaces for measuring
the (ir)regularity of the drift, such as general Besov spaces $B^{-\beta}_{p,q}$ (see e.g.\ \cite{chaudru_menozzi}) 
or fractional Sobolev spaces $H^{-\beta}_p$, see e.g.~\cite{flandoli_et.al14}. It is moreover possible to go beyond the Young regime (in this case $\beta>\frac12$) by making use of paracontrolled calculus or regularity structures, but this is not exploited further in the present paper. For some examples beyond the Young regime with Brownian noise see \cite{diel, cannizzaro, ZhangZhao, hao2021singular}.

The idea  is to frame the SDE  \eqref{eq:SingSDE} as a martingale problem in order to avoid the evaluation of the drift $b$ in $X_t$. Suppose for a moment that $b$ is a suitable function. If we denote by $$L = \partial_t f +\frac12\Delta f + \nabla f \cdot b$$ the Dynkin operator of the process $X$, the
classical martingale problem formulation requires that the quantity 
\begin{equation}\label{eq:MP}
f(t, X_t) - f(0,X_0) - \int_0^t (L f)(s, X_s) ds
\end{equation}
is a local martingale under some probability $\mathbb P$ for all test functions
$f\in C^{1,2}$, or equivalently $f \in C^{1,2}$ bounded or compactly supported.
 That formulation is also equivalent to the one in Definition \ref{DefMP}.
 It is well-known that this formulation is equivalent to  solutions in law. 
When $X$ is the canonical process, denoted by $Z$, 
this is the celebrated {\it Stroock-Varadhan} martingale problem formulation,
see \cite{stroock_varadhan}. In this case 
the solution to the martingale problem is the probability measure $\P$.
From now on we use the formulation with the canonical process $Z$.

Now we consider the case when $b$ is singular, for example when $b(t, \cdot) \in \C^{(-\beta)+}$ for $\beta\in(0,\frac12)$. Then the Dynkin operator
$L$ has a  term  of the form  $\nabla f \cdot b$ which  is potentially problematic  since $b$ is a distribution. 
It is however possible to properly define it if $\nabla f$ is smooth enough (see Section \ref{sc:Schauderpoint}) and the result is a distribution with the same (ir)regularity of $b$, namely in general $(L f)(t, \cdot) \in \C^{-\beta}$ even if
$f\in C^{1,2}$ with compact support, so we have simply shifted the problem from $b$ to $L f$. Indeed,  the integrand $(L f)(s, Z_s) $ in \eqref{eq:MP} is again ill-posed because it cannot be evaluated at $Z_t$. 
However, one can choose a different domain  $\mathcal D$ of test functions $f$ to make sure that $L f$ is a
function with enough regularity.
This can be done if one is able to give a meaning, to solve and study the singular Kolmogorov PDE $L f = g$ where $g$ belongs to  a suitable class of functions,
see Section \ref{ssc:singularPDE}:
 this is the idea behind the definition of  \emph{rough martingale problem} for SDEs with distributional drift.
 Notice the
 difference between the Stroock-Varadhan formulation and  the rough martingale problem: the domain of the first one is piloted by
 functions $f$ that appear in the quantity \eqref{eq:MP} while the domain of the latter is piloted by functions $g$ that are `de facto' the values of $L f$ in  \eqref{eq:MP}.

The precise definition  of solution to the rough martingale problem with distributional drift $b$ and initial condition $\nu$ is given below, see also \cite{issoglio_et.al24}.
\begin{definition}[Rough martingale problem]
\label{def:MK_MP}
  Let $\Omega_T$ be the canonical space  $C([0,T];\R^d)$ equipped with
  its Borel $\sigma$-field $\shf_T$, and let $Z$ denote the canonical process on it.
  A solution (in law) of the martingale problem
  with distributional drift $b$ and initial law $\nu\in \mathcal P(\R^d)$ 
  is  a Borel  probability measure $\P$ on  $(\Omega_T, \shf_T)$,
  such that 
$Z_0 \sim \nu $ under $\P$ and
 the quantity
\[
f(t, Z_t) - f(0,Z_0) - \int_0^t (L f)(s, Z_s) ds
\]
 is a local martingale under $\P$ for every $f\in \shd$, where
\begin{equation}\label{eq:DBis}
  \shd:=\{ f\in C_T\C^{1+\be}:\exists g\in C_T \shs \text{ such that } Lf=g\text{ and }f_T\in\shs\}.
\end{equation}
We say that the martingale problem problem with distributional drift $b$ and initial law $\nu\in \mathcal P(\R^d)$ 
admits {\it uniqueness} if, whenever we have two solutions $\P$
and  $\hat \P$,  then $\P = \hat \P$.

We denote by rMP$(b, \nu)$ the rough martingale problem with distributional drift $b$ and initial condition~$\nu$.
By a slight abuse of notation, if $\nu$ admits a density $v_0$ then we write rMP$(b, v_0)$.
\end{definition}

 Supposing that $b$ is a bounded function, 
   then one could consider alternatively the corresponding
   Stroock-Varadhan martingale problem related to $(I_d,b)$
   according to Definition \ref{DefMP}.
 Lemma 4.1 in \cite{issoglio_russoMPb} shows the equivalence between the two martingale
   problem formulations, when $b\in C_T \C^{0+}$.

\subsection{Solving the rough martingale problem}\label{sec:linearSDE}

Here we continue following \cite{issoglio_russoMPb}. The notion of solution to the rough martingale problem was explained in Section \ref{sec:sol-singular-SDE}.
The first thing to notice is that the domain $\mathcal D$ given  in \eqref{eq:DBis} is a subset of $\mathcal D_0$ introduced in Definition \ref{def:L}, and by Theorem \ref{thm:u} the PDE $L f =g$ is well-posed.

The main result of this section is the following  theorem of well-posedness for the rMP$(b, \nu)$.
\begin{theorem}[Existence and Uniqueness -- Theorem 4.5 in \cite{issoglio_russoMPb}] \label{thm:MP}
Let us suppose $b$ to fulfill Assumption \ref{ass:b} and let $\nu$ be a probability measure on $\R^d$. Then there exists a unique solution $\P$ to the rough martingale problem rMP$(b,\nu)$.
\end{theorem}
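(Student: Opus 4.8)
The plan is to get existence by regularising the drift and transporting tightness through a Zvonkin-type transformation, and to get uniqueness via the Markov-marginal technique of Section~\ref{sc:ethier-kurtz}. \emph{Existence.} First I would mollify $b$ into a sequence of smooth bounded functions $b_n$ with $b_n\to b$ in $C_T\C^{-\be}$, which is possible since $b\in C_T\C^{(-\be)+}$ by Assumption~\ref{ass:b}. For each $n$ the classical SDE $dX^n_t=b_n(t,X^n_t)\,dt+dW_t$ with $X^n_0\sim\nu$ has a weak solution, i.e.\ a law $\P_n$ on $\Omega_T$ solving the Stroock--Varadhan martingale problem. The crucial point is tightness of $(\P_n)_n$, which does \emph{not} follow directly from Kolmogorov--Chentsov since $\|b_n\|_\infty$ typically diverges. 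Instead I would use the Zvonkin transformation of Section~\ref{ssc:zvonkin}: let $u_n\in C_T\C^{(2-\be)-}$ be the solution of the corresponding Zvonkin-type Kolmogorov equation with drift $b_n$, obtained by the same Schauder-estimate and fixed-point scheme as in Theorem~\ref{thm:u} and controlled uniformly in $n$ through the bounded quantity $\|b_n\|_{C_T\C^{-\be}}$; adjusting a damping parameter as in \eqref{eq:Zvonkin_modif} one can ensure $\|\nabla u_n\|_\infty<1$ uniformly in $n$, so that $\phi_n(t,x):=x+u_n(t,x)$ is, for each $t$, a bi-Lipschitz homeomorphism of $\R^d$ with constants uniform in $n$. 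A generalised It\^o formula then shows that $Y^n_t:=\phi_n(t,X^n_t)$ solves an SDE with uniformly bounded coefficients, whence $(Y^n)_n$ is tight and so is $(X^n)_n=(\phi_n^{-1}(t,Y^n_t))_n$. One then extracts a weakly convergent subsequence $\P_{n_k}\to\P$, noting that $Z_0\sim\nu$ under $\P$.

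It remains to check that $\P$ solves rMP$(b,\nu)$. Fix $f\in\shd$ as in \eqref{eq:DBis}, so $Lf=g\in C_T\shs$ and $f_T\in\shs$. For each $n$, Theorem~\ref{thm:u} gives the solution $f_n\in C_T\C^{(2-\be)-}$ of $\partial_t f_n+\frac12\Delta f_n+\nabla f_n\cdot b_n=g$ with $f_n(T)=f_T$; since $b_n,g,f_T$ are smooth, $f_n$ is a classical solution, and by stability of the mild formulation \eqref{eq:mild_sol_kol} in the drift one has $f_n\to f$ and $\nabla f_n\to\nabla f$ uniformly on $[0,T]\times\R^d$. Applying the classical It\^o formula under $\P_n$ and using the PDE, the process $f_n(t,Z_t)-f_n(0,Z_0)-\int_0^t g(s,Z_s)\,ds$ is a $\P_n$-martingale. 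Passing to the limit along $n_k$ — using the uniform convergences $f_n\to f$ and $\nabla f_n\to\nabla f$, the fact that $g$ is bounded and fixed, and $\P_{n_k}\to\P$ — the martingale property survives, so $f(t,Z_t)-f(0,Z_0)-\int_0^t(Lf)(s,Z_s)\,ds$ is a $\P$-martingale, i.e.\ $\P$ solves rMP$(b,\nu)$.

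\emph{Uniqueness.} Here I would run the scheme of Section~\ref{sc:ethier-kurtz}. Let $\P^1,\P^2$ be two solutions of rMP$(b,\nu)$, and more generally of the time-shifted problem started at any $t_0$ from any initial law. For $g(t,x)=g_0(t)g_1(x)$ with $g_0,g_1$ smooth of compact support, let $\varphi\in C_T\C^{(2-\be)-}\subset\shd$ be the unique solution of $L\varphi=g$ with $\varphi(T)=0$ provided by Theorem~\ref{thm:u}. Testing the martingale property of $\P^1$ and $\P^2$ between $t_0$ and $T$, taking expectations, and using $\varphi(T)=0$ together with the common initial marginal, one obtains
\[
\int_{t_0}^{T} g_0(s)\,\E^{\P^1}[g_1(Z_s)]\,ds=\int_{t_0}^{T} g_0(s)\,\E^{\P^2}[g_1(Z_s)]\,ds .
\]
Letting $g_0,g_1$ vary and using the continuity of $Z$, this gives $\mathcal L^{\P^1}_{Z_s}=\mathcal L^{\P^2}_{Z_s}$ for all $s$, that is Property~M of Definition~\ref{def:Prop_P} holds for every initial datum and every starting time; Theorem~\ref{thm:EK-extended} then upgrades this to $\P^1=\P^2$.

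The main obstacle is the existence half: establishing tightness of $(\P_n)_n$ uniformly in $n$, which genuinely requires the Zvonkin transformation and the uniformity in the drift of the Kolmogorov estimates behind Theorem~\ref{thm:u}, and then identifying the weak limit as a solution of the rough martingale problem via the PDE stability $f_n\to f$. By contrast, once Theorem~\ref{thm:u} is in hand, the uniqueness step follows almost mechanically from Property~M and Theorem~\ref{thm:EK-extended}.
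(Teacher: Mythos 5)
Your proposal is correct, but it follows a genuinely different route from the paper's own argument. The paper proves Theorem \ref{thm:MP} by invoking the equivalence result of Theorem \ref{thm:XY}: via the Zvonkin map $\phi$ the rough martingale problem is transferred, in both directions, to the transformed SDE \eqref{eq:Y}, whose coefficients are continuous, bounded and whose diffusion is non-degenerate; existence and uniqueness then come in one stroke from the classical Stroock--Varadhan well-posedness theorem (Theorem 10.2.2 of \cite{stroock_varadhan}) for deterministic starting points, extended to a random initial law $\eta$ by a superposition argument for existence and a disintegration argument for uniqueness. You instead prove existence by mollifying $b$, establishing tightness of the approximating laws through uniform Zvonkin estimates, and identifying the weak limit through stability of the Kolmogorov PDE $Lf=g$ with respect to the drift; and you prove uniqueness by verifying Property~M and appealing to Theorem \ref{thm:EK-extended}. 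These are precisely the ``alternative techniques'' that the paper itself records immediately after its proof (tightness as in Theorem \ref{thm:tightnessY} and Section 5.1 of \cite{issoglio_et.al24}; Markov marginal uniqueness as in Section \ref{sc:ethier-kurtz}). Your route is heavier here --- one must control the regularised Kolmogorov solutions uniformly in $n$ and pass the martingale property to the limit, steps you assert rather than carry out --- but it is more robust, since it survives in degenerate settings where a clean reduction to a classically well-posed SDE is unavailable. Note also that your existence half is not Zvonkin-free: the tightness estimate still passes through the transformed processes $Y^n=\phi_n(\cdot,X^n)$, exactly as in Theorem \ref{thm:tightnessY}, so the transformation is used in both approaches, only for different purposes.
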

  The proof of the above theorem is based on the following equivalence result.
\begin{theorem}[Equivalence -- Theorem 3.9 in \cite{issoglio_russoMPb}] \label{thm:XY}
Let us suppose $b$ to fulifill Assumption \ref{ass:b}.
\begin{itemize}
\item[(i)] If $\mathbb P $ is a solution to  rMP$(b, \nu)$ then $(Y, \mathbb P) $ is a solution in law  to \eqref{eq:Y},   where $Y_t := \phi(t, Z_t)$ and  $Y_0\sim \eta$, where  $\eta$ is the pushforward measure of $\nu$ given by
$\eta:= \nu (\phi^{-1}(0, \cdot))$. 
\item[(ii)] If $(Y, \mathbb Q) $ is a solution in law  to \eqref{eq:Y} with $Y_0\sim \eta$ then $\P:= \mathcal L_{X}^\mathbb Q$  is a solution to rMP$(b, \nu)$, where $X_t := \phi^{-1}(t, Y_t)$ and $\nu$ is the pushforward measure of $\eta$ given by $\nu:= \eta (\phi(0, \cdot))$.
\end{itemize}
\end{theorem}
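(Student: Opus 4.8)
The plan is to build everything on the Zvonkin map $\phi(t,x)=x+u(t,x)$, where $u=(u^1,\dots,u^d)$ solves the modified Kolmogorov system \eqref{eq:Zvonkin_modif}, i.e.\ $Lu^i=\lambda u^i-b^i$ with $u^i(T,\cdot)\equiv 0$, and $\lambda$ is taken large enough (as in the discussion around \eqref{eq:Zvonkin_modif}, via the contraction estimate behind Theorem \ref{thm:u}) so that $u\in C_T\C^{(2-\beta)-}$ is bounded with $\|\nabla u\|_\infty<1$; then $\phi(t,\cdot)$ is a $C^1$-diffeomorphism of $\R^d$, $\nabla\phi=I_d+\nabla u$ is invertible with bounded inverse, and $\phi^{-1}(t,\cdot)-\mathrm{id}\in\C^{(2-\beta)-}$ uniformly in $t$. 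The algebraic identity at the heart of the proof is $L\phi^i=L(x^i)+Lu^i=b^i+(\lambda u^i-b^i)=\lambda u^i$: the distributional pieces $\nabla u^i\cdot b$ and $-b^i$ cancel, so $L\phi^i$ is a bounded continuous function although $\phi^i$ itself does not belong to $\shd$. Before either implication I would establish an auxiliary extension lemma, namely that the local-martingale property in Definition \ref{def:MK_MP} propagates from $f\in\shd$ to the larger class $\widetilde\shd$ of functions $f$ of at most linear growth that belong (locally) to $C_T\C^{1+\beta}\cap C^1([0,T];\shs')$ and for which $Lf$ is a locally bounded continuous function; the proof mollifies the data $(Lf,f_T)$ by Schwartz functions, solves the singular Kolmogorov PDE via Theorem \ref{thm:u} to obtain $f_n\in\shd$ with $f_n\to f$ in $C_T\C^{1+\alpha}$, and passes the martingale property to the limit with Burkholder--Davis--Gundy and uniform moment bounds. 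In particular $\phi^i\in\widetilde\shd$, and $\varphi\circ\phi\in\widetilde\shd$ for $\varphi\in C^2_b(\R^d)$, with $L(\varphi\circ\phi)=\lambda\sum_i(\partial_i\varphi\circ\phi)u^i+\frac12\sum_{ij}(\partial_{ij}\varphi\circ\phi)(\nabla\phi(\nabla\phi)^\top)_{ij}$, the second-order term being the carr\'e-du-champ $\nabla\phi^i\cdot\nabla\phi^j=(\nabla\phi(\nabla\phi)^\top)_{ij}$.

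For part (i), let $\P$ solve rMP$(b,\nu)$. Testing the extended martingale property against $\phi^i$ shows $Y_t:=\phi(t,Z_t)$ is a continuous $\P$-semimartingale with $Y_t=Y_0+\lambda\int_0^t u(s,Z_s)\,ds+M_t$, $M$ a continuous local martingale; testing against $\varphi\circ\phi$ for $\varphi\in C^2_b$ and comparing with the classical It\^o formula for $\varphi(Y_t)$ (legitimate, $Y$ now being a genuine semimartingale and $\varphi\in C^2$) forces $\langle M^i,M^j\rangle_t=\int_0^t(\nabla\phi(\nabla\phi)^\top)_{ij}(s,Z_s)\,ds$. Since $\nabla\phi(s,Z_s)$ is invertible with bounded inverse, $W_t:=\int_0^t(\nabla\phi(s,Z_s))^{-1}\,dM_s$ has $\langle W\rangle_t=tI_d$, hence by L\'evy's characterisation $W$ is a $\P$-Brownian motion and $M_t=\int_0^t\nabla\phi(s,Z_s)\,dW_s$. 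Substituting $Z_s=\phi^{-1}(s,Y_s)$ and $\nabla\phi=I_d+\nabla u$ turns the decomposition of $Y$ into exactly \eqref{eq:Y}, with $Y_0=\phi(0,Z_0)\sim\eta$; hence $(Y,\P)$ is a solution in law of \eqref{eq:Y}.

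For part (ii), let $(Y,\Q)$ solve \eqref{eq:Y} with driving Brownian motion $W$ and $Y_0\sim\eta$, set $X_t:=\phi^{-1}(t,Y_t)$ and $\P:=\mathcal L_X^\Q$. The coefficients of \eqref{eq:Y} are bounded continuous (compositions of $u\in C_T\C^{(2-\beta)-}$ with the Lipschitz map $\phi^{-1}(t,\cdot)$), so $Y$ is a $\Q$-semimartingale with $dY_t=\lambda u(t,X_t)\,dt+\nabla\phi(t,X_t)\,dW_t$. Fix $f\in\shd$ with $Lf=g\in C_T\shs$; we must show $f(t,X_t)-f(0,X_0)-\int_0^t g(s,X_s)\,ds$ is a $\Q$-local martingale. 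I would mollify $b_n:=b*\rho_n$, let $u_n$ solve \eqref{eq:Zvonkin_modif} with drift $b_n$, $\phi_n:=\mathrm{id}+u_n$, and $f_n$ solve $L_n f_n=g$, $f_n(T)=f_T$ with $L_n=\partial_t+\frac12\Delta+b_n\cdot\nabla$; all of these are now smooth, and by Theorem \ref{thm:u} and the continuity of its solution map in the drift (cf.\ Section \ref{sec:McKean-point}) one has $u_n\to u$, $\phi_n^{-1}\to\phi^{-1}$, $\nabla\phi_n\to\nabla\phi$, $f_n\to f$ in $C_T\C^{1+\alpha}$. Letting $Y_n$ solve the regularised version of \eqref{eq:Y} (with $u_n,\phi_n$) driven by the same $W$ with $Y_n(0)=Y_0$, stability of SDEs with regular coefficients yields $Y_n\to Y$ in $\Q$-probability uniformly on $[0,T]$, hence $X_n:=\phi_n^{-1}(\cdot,Y_n)\to X$. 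For smooth $b_n$ the classical Zvonkin computation of Section \ref{ssc:zvonkin} shows $X_n$ is a weak solution of $dX_n=b_n(t,X_n)\,dt+dW$, i.e.\ solves the Stroock--Varadhan martingale problem for $(I_d,b_n)$; applying this to $f_n\in C^{1,2}$ gives that $f_n(t,X_{n,t})-f_n(0,X_{n,0})-\int_0^t g(s,X_{n,s})\,ds$ is a martingale. Passing $n\to\infty$ with the uniform convergences, Burkholder--Davis--Gundy and uniform moment bounds, the limit $f(t,X_t)-f(0,X_0)-\int_0^t g(s,X_s)\,ds$ is a $\Q$-local martingale; since $X_0=\phi^{-1}(0,Y_0)$ has law $\nu$ (the pushforward of $\eta$ under $\phi^{-1}(0,\cdot)$, as in the statement), this shows $\P$ solves rMP$(b,\nu)$.

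The main obstacle is the low spatial regularity of $u$ and of $f\in\shd$, which is only $C_T\C^{(2-\beta)-}$, strictly below $C^2$, and so blocks a direct It\^o expansion of $\phi(t,Z_t)$ in (i) and of $f(t,\phi^{-1}(t,Y_t))$ in (ii). Both the extension lemma and part (ii) get around this through the mollification of the drift, relying crucially on the quantitative stability of the singular Kolmogorov PDE in Theorem \ref{thm:u}; the remaining delicate points are the uniform-in-$n$ bounds needed to pass the stochastic integrals to the limit and, in (i), verifying that $\phi^i$ and $\varphi\circ\phi$ genuinely lie in the enlarged domain $\widetilde\shd$.
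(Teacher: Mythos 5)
Your overall architecture --- the Zvonkin map $\phi=\mathrm{id}+u$ with the cancellation $L\phi^i=\lambda u^i$, an enlarged domain containing $\phi^i$ and $\varphi\circ\phi$, L\'evy's characterisation to extract the Brownian motion in (i), and mollification of $b$ plus the classical Zvonkin computation in (ii) --- is exactly the route the paper intends (cf.\ Section \ref{ssc:zvonkin} and the proof sketch of Theorem \ref{thm:MP}). Two steps, however, do not go through as written.

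First, the extension lemma. Elements of $\shd$ have Schwartz data, hence by Theorem \ref{thm:u} they are bounded; you propose to approximate the linear-growth function $\phi^i$ (whose terminal value is $x^i$) by such elements in the $C_T\C^{1+\alpha}$ topology, whose norm dominates the sup norm. No sequence of bounded functions can converge to $x^i$ in that topology, so the approximation scheme fails globally. The paper itself points to the fix: \cite{issoglio_russoPDEa} develops the singular Kolmogorov PDE in spaces of functions with linear growth (this is explicitly recalled in Section \ref{ssc:zvonkin}), and the domain of the martingale problem in \cite{issoglio_russoMPb} is built on that weighted theory; the alternative is a genuine localisation with stopping times and cut-offs of $\phi^i$, at the price of re-deriving the identity $L\phi^i=\lambda u^i$ for the cut-off, which is precisely where the distributional products $\nabla u^i\cdot b$ and $(\partial_i\varphi\circ\phi)\,\Delta\phi^i$ must be handled through the Bony estimates \eqref{eq:bony} rather than a formal chain rule. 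You flag this as delicate, but as stated the mollification-of-data argument cannot deliver it.

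Second, in part (ii) you invoke ``stability of SDEs with regular coefficients'' to get $Y_n\to Y$ in $\Q$-probability with the same driving $W$. The regularised equations do have smooth coefficients, but the limiting equation \eqref{eq:Y} has diffusion coefficient $\nabla\phi(s,\phi^{-1}(s,\cdot))$ which is only $\C^{\alpha}$ in space with $\alpha<1-\beta$, and in dimension $d\ge 2$ pathwise uniqueness --- hence pathwise stability toward the \emph{given} solution $Y$ --- is not available for such coefficients. Since the conclusion of (ii) concerns only laws, the step should be replaced by a weak-convergence argument: tightness of $(Y_n)$ (Theorem \ref{thm:tightnessY}), identification of every limit point as a law-solution of \eqref{eq:Y} by uniform convergence of the coefficients, and uniqueness in law of \eqref{eq:Y} from Stroock--Varadhan (bounded, continuous, non-degenerate coefficients), which identifies the limit with the law of $Y$; the martingale property of $f_n(t,X_{n,t})-f_n(0,X_{n,0})-\int_0^t g(s,X_{n,s})\,ds$ then passes to the limit in law. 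With these two repairs your argument coincides with that of \cite{issoglio_russoMPb}.
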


\begin{proof}[Idea of the proof of Theorem \ref{thm:MP}]
By Theorem \ref{thm:XY} the proof reduces to the study of well-posedness of 
SDE \eqref{eq:Y}, with initial condition distributed according
to a given probability law $\eta$.
For this equation we can apply classical Stroock-Varadhan arguments since
the  drift  and diffusion coefficients  are continuous, bounded, and the
diffusion coefficient is  non-degenerate. Since those results are expressed only for deterministic
initial conditions, for the comfort of the reader we briefly explain how to  adapt them to
the context of random initial conditions.

As anticipated, Theorem 10.2.2 in \cite{stroock_varadhan} ensures existence (and measurability
with respect to $y$) of a  solution in law
$(Z, \mathbb Q^y)$ to \eqref{eq:Y} when the initial condition is deterministic, namely when $Z_0=y$ for any $y\in \R^d$.
Now we use a {\em superposition argument} to glue together the laws $\mathbb Q^y$ according to the initial condition $\eta$ and construct the probability $\mathbb Q(\cdot) := \int_{\R^d} \mathbb Q^y(\cdot) \eta(dy)$ so that the
$(Z,\Q)$ is a solution of \eqref{eq:Y}.

To prove {\em uniqueness} of \eqref{eq:Y}, without restriction of generality,
let $\P$ be a probability on the canonical space such that $(Z,\P)$ is a solution to \eqref{eq:Y}.
At this point one  uses a {\em disintegration argument}, see e.g.\ Chapter III, nos.\ 70–72 of \cite{dellacherie}, which guarantees the existence a random kernel $\P^y$
such that $\P(\cdot)= \int \P^y(\cdot) \eta(dy)$.
Then one uses 
 Theorem 10.2.2 in \cite{stroock_varadhan}, which ensures uniqueness
of the solution of \eqref{eq:Y} with $Z_0 = y$. 
This completely identifies $\P$ so that uniqueness is established.
\end{proof}

\subsubsection*{Alternative  techniques for existence and uniqueness}

{\em Existence} for the rough martingale problem rMP$(b,\nu)$  can be established also
by making use of tightness arguments as it was done for instance in
 Section 5.1 of \cite{issoglio_et.al24},
in a degenerate framework.

\begin{theorem}[Tightness -- Lemma 3.12 in \cite{issoglio_russoMPb}]\label{thm:tightnessY}
 Let $(b^n)$ be a sequence in $C_T\C^{(-\be)+}$ converging to $b$ in $C_T\C^{-\be}$.
Let $Y^n$ be the solution of SDE \eqref{eq:Y}, with fixed initial condition $\eta$, where the drift $b$ is replaced by $b^n$. 
Then the sequence of laws $(\mathbb Q^n)$ of $(Y^n)$ is tight.
\end{theorem}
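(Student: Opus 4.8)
The plan is to apply a standard tightness criterion for laws on $C([0,T];\R^d)$, namely the Kolmogorov–Chentsov type criterion (see e.g.\ Theorem 2.2.8 and Problem 2.4.11 in \cite{karatzasShreve}): a family $(\mathbb Q^n)$ of laws on $\Omega_T$ is tight provided (i) the initial laws $(Y^n_0)$ are tight (here they are all equal to $\eta$, so this is immediate), and (ii) there are constants $p>0$, $\kappa>1$ and $C>0$, independent of $n$, such that
\begin{equation}\label{eq:KC}
\E\big[ |Y^n_t - Y^n_s|^p \big] \leq C |t-s|^{\kappa}, \qquad 0\leq s\leq t\leq T.
\end{equation}
So the core of the argument is to establish \eqref{eq:KC} with constants that do not depend on $n$.

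To do this I would use the explicit form of the transformed SDE \eqref{eq:Y}, which for the $n$-th equation reads
\begin{equation}
Y^n_t = Y^n_0 + \lambda \int_0^t u^n(r, (\phi^n)^{-1}(r, Y^n_r))\,dr + \int_0^t \nabla u^n(r, (\phi^n)^{-1}(r, Y^n_r))\,dW_r + W_t,
\end{equation}
where $u^n$ is the mild solution of the modified Kolmogorov PDE \eqref{eq:Zvonkin_modif} with drift $b^n$ and $\phi^n = \mathrm{id} + u^n$. Hence for $s\leq t$,
\begin{equation}
Y^n_t - Y^n_s = \lambda \int_s^t u^n(r,\cdot)(\dots)\,dr + \int_s^t \nabla u^n(r,\cdot)(\dots)\,dW_r + (W_t - W_s).
\end{equation}
The Brownian increment trivially satisfies $\E|W_t-W_s|^p = c_p |t-s|^{p/2}$. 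For the drift term one uses $\|u^n\|_{C_T\C^0}\leq \|u^n\|_{C_T\C^{1+\beta}}$ and bounds it by $\lambda |t-s|\,\sup_n \|u^n\|_{C_T\C^{1+\beta}}$; for the stochastic integral one applies the Burkholder–Davis–Gundy inequality to get $\E\big|\int_s^t \nabla u^n\,dW\big|^p \leq c_p |t-s|^{p/2}\,\sup_n\|\nabla u^n\|^p_{C_T\C^0}$, and $\|\nabla u^n\|_{C_T\C^0}\leq c\|u^n\|_{C_T\C^{1+\beta}}$ by Bernstein's inequality \eqref{eq:nabla} (or the embedding $\C^{\beta}\hookrightarrow\C^0$). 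Choosing $p>2$ gives \eqref{eq:KC} with $\kappa = p/2 > 1$, as required.

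The main obstacle — and the only non-routine point — is obtaining a \emph{uniform in $n$} bound on $\|u^n\|_{C_T\C^{1+\beta}}$ (and hence on $\|\nabla u^n\|_{C_T\C^0}$). This follows from the well-posedness result Theorem \ref{thm:u} together with a continuity-in-the-drift argument: since $b^n \to b$ in $C_T\C^{-\beta}$, the sequence $(b^n)$ is bounded in $C_T\C^{-\beta}$, and the contraction estimate used in the proof of Theorem \ref{thm:u} (based on Schauder estimates Lemma \ref{lm:schauder}, Bernstein's inequality \eqref{eq:nabla}, the Bony product estimate \eqref{eq:bonyt}, and the equivalent $\rho$-norm \eqref{eq:rho_norm}) yields a bound on $\|u^n\|_{C_T\C^{1+\beta}}$ depending only on $\sup_n\|b^n\|_{C_T\C^{-\beta}}$ and on $\lambda$, not on $n$ itself. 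One should also check that $\lambda$ can be chosen once and for all, uniformly in $n$, large enough that each $\phi^n$ is invertible with $\|\nabla u^n\|_{C_T\C^0}$ small — again this size depends only on the uniform bound on $\|b^n\|_{C_T\C^{-\beta}}$. With these uniform bounds in hand, all the estimates above have $n$-independent constants, \eqref{eq:KC} holds uniformly, and tightness of $(\mathbb Q^n)$ follows.
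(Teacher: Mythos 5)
Your proposal follows essentially the same route as the paper: reduce tightness to the trivial tightness of the (common) initial law $\eta$ plus a uniform-in-$n$ moment estimate $\E[|Y^n_t-Y^n_s|^p]\leq C|t-s|^{\kappa}$ with $\kappa>1$ (the paper takes $p=4$, $\kappa=2$ and passes through Garsia--Rodemich--Rumsey and Chebyshev, which is equivalent to invoking the Kolmogorov--Chentsov criterion directly as you do), and then obtain that estimate from the boundedness and $n$-uniform control of the coefficients of the transformed SDE \eqref{eq:Y}. Your additional remarks on where the uniformity comes from (boundedness of $(b^n)$ in $C_T\C^{-\beta}$ feeding the contraction estimate for $u^n$, and the choice of $\lambda$ once and for all) correctly fill in the details the paper's sketch leaves implicit.
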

\begin{ideaproof}
  For a general sequence of processes $(Y^n)$,  tightness  can be shown  according to Theorem 4.10 in Chapter 2 of \cite{karatzasShreve}, hence one needs to prove that
 \begin{equation}\label{eq:KS1}
\lim_{\gamma \to\infty} \sup_{n\geq 1} \mathbb  Q^{n}(| Y^{n}_0 |>\gamma) =0
\end{equation}
and for every $\varepsilon > 0$,
\begin{equation}\label{eq:KS2} 
\lim_{\delta\to0} \sup_{n\geq 1} \mathbb  Q^{n} \Big ( \sup_{\substack{
s,t \in [0,T] \\|s-t|\leq \delta }} |Y^{n}_t-Y^{n}_s|>\varepsilon   \Big ) =0 .
\end{equation}
The requirement \eqref{eq:KS1} follows from the finiteness of the initial condition $Y_0^n \sim \eta $. The second requirement \eqref{eq:KS2} can be obtained  if one has an estimate on the moments 
\begin{equation}\label{eq:kolm}
  \mathbb E^{\mathbb Q^n}[|Y_t^{n} - Y_s^{n}|^p] \leq C |t-s|^{1+\sigma}, \qquad \forall { t,s\in [0,T]}, \quad \text{for some } p>2, \sigma>0.
\end{equation}
Indeed if \eqref{eq:kolm} holds,  Garsia–Rodemich–Rumsey Lemma (see e.g.  Section 3 in \cite{grr})  ensures that for every $m\in(0,\sigma)$ there exists a constant $C'$ and a sequence of random variables $(\Gamma_n)$ uniformly in $L^1(\Omega)$  such that
\begin{equation}\label{eq:grr}
 |Y_t^{n} - Y_s^{n}|^p  \leq C' |t-s|^m \Gamma_n.
\end{equation}
Now the requirement \eqref{eq:KS2} follows by an application of \eqref{eq:grr} and Chebyshev's inequality, noticing finally that $\sup_n\E(\Gamma_n)<\infty$.

The key estimate \eqref{eq:kolm} on the $p$-th moment is therefore fundamental to show tightness. In the specific setting we are in, it is possible to obtain this estimate for $p=4$ and $\sigma=1$ thanks to the good properties of the coefficients of the transformed SDE \eqref{eq:Y}.  
\end{ideaproof}

Using the tightness proved in Theorem \ref{thm:tightnessY} 
together with the equivalence result stated in Theorem \ref{thm:XY} one  gets the continuity theorem stated below. The full proof can be found in Theorem 4.3 in \cite{issoglio_russoMPb}, see also Proposition 29 in \cite{flandoli_et.al14} for more details.

\begin{theorem}[Continuity -- Theorem 4.3 in \cite{issoglio_russoMPb}]\label{thm:contMP}
  Let $(b^n)$ be a sequence in $C_T\C^{(-\be)+}$ converging to $b$ in $C_T\C^{-\be}$.
  Let $\P$   (respectively $\P^n$) be a solution to the rough martingale problem with distributional drift $b$ (respectively $b^n$) and initial condition $\nu$. Then the sequence  $(\P^n)$ converges weakly to  $\P$.
\end{theorem}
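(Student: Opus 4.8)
The plan is to push everything through the Zvonkin transformation of Section~\ref{ssc:zvonkin} and reduce the statement to a stability statement for classical (non-singular) SDEs. For the limiting drift $b$ let $u$ be the solution of the damped Kolmogorov PDE \eqref{eq:Zvonkin_modif} provided by Theorem~\ref{thm:u}, set $\phi(t,x)=x+u(t,x)$ as in \eqref{eq:phi}, and let $(Y,\mathbb Q)$ be the associated solution of the transformed SDE \eqref{eq:Y}; do the same with each $b^n$, obtaining $u^n$, $\phi^n$ and $(Y^n,\mathbb Q^n)$, the latter with initial law $\eta^n$ equal to the pushforward of $\nu$ under $\phi^n(0,\cdot)$. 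By Theorem~\ref{thm:XY}, $\P$ is the pushforward of $\mathbb Q$ under the path map $\Psi:\Omega_T\to\Omega_T$, $\Psi(\omega)(t):=\phi^{-1}(t,\omega(t))$, and likewise $\P^n=(\Psi^n)_\ast\mathbb Q^n$ with $\Psi^n(\omega)(t):=(\phi^n)^{-1}(t,\omega(t))$. It therefore suffices to prove (a) $\mathbb Q^n\to\mathbb Q$ weakly on $\Omega_T$, and (b) that weak convergence is preserved under the varying maps $\Psi^n\to\Psi$.

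The first, and most technical, step is the PDE stability. Since the mild formulation \eqref{eq:mild_sol_kol} of \eqref{eq:Zvonkin_modif} defines a contraction on a ball of $C_T\mathcal C^{1+\alpha}$ (for $\alpha\in[\beta,1-\beta)$) in the $\rho$-equivalent norm \eqref{eq:rho_norm} by Theorem~\ref{thm:u}, and its right-hand side depends on the drift only through terms controlled by Bony's estimate \eqref{eq:bonyt}, Bernstein's inequality \eqref{eq:nabla} and Schauder's estimates (Lemmata~\ref{lm:schauder} and \ref{lm:nablaP}), the fixed point depends Lipschitz-continuously on the drift in these norms. Hence $b^n\to b$ in $C_T\mathcal C^{-\beta}$ forces $u^n\to u$ in $C_T\mathcal C^{1+\alpha}$, so that $\phi^n\to\phi$ and $\nabla u^n\to\nabla u$ \emph{uniformly} on $[0,T]\times\R^d$. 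Because $\lambda$ is chosen large so that $\|\nabla u^n\|_\infty$ is uniformly small (this is precisely the role of the damping term, see \cite{flandoli_et.al14}), the maps $\phi^n(t,\cdot)$ are uniformly bi-Lipschitz homeomorphisms, the inverses $(\phi^n)^{-1}$ are equi-Lipschitz in $x$, and $(\phi^n)^{-1}\to\phi^{-1}$ uniformly on compacts of $[0,T]\times\R^d$. Consequently the drift $y\mapsto\lambda\,u^n(s,(\phi^n)^{-1}(s,y))$ and the diffusion $y\mapsto \mathrm{Id}+\nabla u^n(s,(\phi^n)^{-1}(s,y))$ of \eqref{eq:Y} converge uniformly on $[0,T]\times\R^d$ to those built from $u$, and are bounded, continuous and uniformly non-degenerate, uniformly in $n$; moreover $\phi^n(0,\cdot)\to\phi(0,\cdot)$ uniformly on compacts and $\nu$ is tight, so $\eta^n\to\eta$ weakly.

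For step (a), Theorem~\ref{thm:tightnessY} gives that $(\mathbb Q^n)$ is tight on $\Omega_T$, so by Prokhorov every subsequence has a further subsequence converging weakly to some $\widetilde{\mathbb Q}$. Along such a subsequence one passes to the limit in the martingale problem solved by $\mathbb Q^n$: for $f\in C_c^\infty(\R^d)$ and $0\le s\le t\le T$ the functional $\omega\mapsto f(\omega_t)-f(\omega_s)-\int_s^t(\mathcal L^n_r f)(\omega_r)\,dr$, with $\mathcal L^n$ the (classical, bounded, continuous) generator of \eqref{eq:Y} for $b^n$, is bounded continuous up to an error that is $O(\|\mathcal L^n-\mathcal L\|_\infty)\to 0$ by the uniform convergence of the coefficients; the standard weak-convergence argument for martingale problems (see e.g.\ Chapter~5 of \cite{karatzasShreve} or \cite{stroock_varadhan}) then shows $\widetilde{\mathbb Q}$ solves the martingale problem for \eqref{eq:Y} with drift $u$ and initial law $\lim_n\eta^n=\eta$. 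By uniqueness for this classical SDE (Stroock--Varadhan, exactly as used in the proof of Theorem~\ref{thm:MP}), $\widetilde{\mathbb Q}=\mathbb Q$; since every subsequential limit is $\mathbb Q$, the whole sequence satisfies $\mathbb Q^n\to\mathbb Q$ weakly. For step (b), I would invoke the elementary fact that if $\mu_n\to\mu$ weakly on a Polish space with $(\mu_n)$ tight, and $h_n,h:\Omega_T\to\Omega_T$ are continuous with $h_n\to h$ uniformly on compacts, then $(h_n)_\ast\mu_n\to h_\ast\mu$ weakly (by Skorokhod representation: $\omega_n\to\omega$ a.s., the $\omega_n$ stay in a compact set by tightness, and $h_n(\omega_n)\to h(\omega)$ there). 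Applying this with $\mu_n=\mathbb Q^n$, $h_n=\Psi^n\to\Psi=h$ yields $\P^n=(\Psi^n)_\ast\mathbb Q^n\to\Psi_\ast\mathbb Q=\P$ weakly; that the limit is indeed the unique solution $\P$ of rMP$(b,\nu)$ is guaranteed by Theorem~\ref{thm:MP}. The main obstacle is the first step above: upgrading $b^n\to b$ in $C_T\mathcal C^{-\beta}$ to uniform convergence of the Zvonkin maps $\phi^n$ \emph{together with their inverses and spatial gradients}, since everything downstream is routine once the transformed coefficients are known to converge uniformly with uniform bounds and uniform ellipticity.
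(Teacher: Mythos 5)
Your proposal is correct and follows essentially the same route the paper indicates for this theorem: combine the tightness of the transformed processes (Theorem~\ref{thm:tightnessY}) with the Zvonkin-transformation equivalence (Theorem~\ref{thm:XY}), identify subsequential limits via the classical martingale problem for \eqref{eq:Y}, and push the weak convergence back through the (converging) maps $\phi^{-1}$. The additional details you supply --- the continuous dependence of the Kolmogorov fixed point $u$ on the drift, the uniform bi-Lipschitz control of $\phi^n$ and their inverses, and the stability of pushforwards under uniformly converging maps --- are exactly the ingredients the cited full proof (Theorem 4.3 of \cite{issoglio_russoMPb} and Proposition 29 of \cite{flandoli_et.al14}) is built on.
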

The theorem above will be used for the proof of Theorem \ref{thm:EUMcKean}
of existence  of the McKean rough martingale problem.

As explained in 
 Section \ref{sc:ethier-kurtz}, 
{\em uniqueness}
can be established alternatively via Markov marginal uniqueness
techniques, which rely on uniqueness of the marginal laws, which corresponds to Property M in Definition \ref{def:Prop_P}.
In Section \ref{sc:ethier-kurtz}, we have also illustrated how to check Property M in the case when the martingale problem is formulated
in Definition \ref{def:MK_MP},
which is indeed the case for rMP$(b,\nu)$ as illustrated in \eqref{eq:DBis}.

This method for proving uniqueness is used for example in Theorem 5.11 in \cite{issoglio_et.al24} in a degenerate setting,
in  \cite{zhang2021second} in the  purely kinetic model with
convoluted irregular
drifts, and  in Theorem 6.3 in \cite{hao2021singular} 
outside the Young regime.
Section \ref{sc:sing_deg_McK} will focus  on these degenerate and kinetic settings.
Markov marginal uniqueness techniques are also used by \cite{chaudru_menozzi} in the case of additive $\al$-stable noises 
and distributional drifts in Besov space $L^{r}([0,T];B_{p,q}^{-\be})$, where however the drift is of convolutional type like the one presented in Section \ref{sec:LpLq_p}.

\subsection{On the dynamics  of the singular SDEs}\label{sec:solution_sing_SDE}

In this section we still focus on SDE  \eqref{eq:SingSDE}, which is not law-dependent, but where the  drift $b(t, \cdot)$ is a distribution, hence it cannot be evaluated in $X_t$. 
 We have already  seen  in Section \ref{sec:sol-singular-SDE} one way of treating such equations through the notion of rough martingale problem. There are however other possibilities and we review them below.

Stochastic processes which can be somehow expressed formally as solutions to SDEs with distributional drift,
are for instance processes with reflection such as a {\it one-dimensional Bessel process} (resp.\ a {\it skew Brownian motion}), see e.g.\ Chapter XI (resp.\ Chapter VII) of \cite{ry}. In this case the drift can be seen as an atomic measure such a $\delta$-measure.
Those processes are interesting semimartingales but their study is out of scope of the present survey paper,
which mainly focuses on cases when each component of the (distributional)  drift is a derivative of
a vector of continuous functions.
An early pioneering work on generalised diffusions,
even though the solutions were still semimartingales,
appears in the monograph \cite{portenko}.
The next works to tackle this problem are from the early 2000s, see \cite{frw1, frw2} for time-homogeneous distributional drifts (in dimension 1), or \cite{basschen} for drifts which are measures of Kato class.
In \cite{frw1} two main notions of solution have emerged: {\em rough martingale problem} formulation and
{\it local time $B$-solution}, whereas the notion of  
{\em limit solution} has appeared in \cite{basschen}.

Before explaining those into  details we mention a further notion of solution, called {\em virtual solution}, which was introduced in \cite{flandoli_et.al14}: indeed
it is instructive to illustrate the power of Zvonkin transformation combined with the so-called {\em It\^o-Tanaka trick}.
Following the computations done in Section \ref{ssc:zvonkin}, for suitable large enough
$\lambda>0$, one replaces the ill-defined drift term  $\int_0^t b(s, X_s ) ds$ in \eqref{eq:SingSDE} with (component by component)
\[
\int_0^t  b^i(s, X_s) ds = u^i(0, X_0) - u^i(t, X_t)  + \lambda  \int_0^t u^i (s, X_s) ds +  \int_0^t \nabla u^i (s, X_s) dW_s,
\] 
where $u^i$ is the solution of \eqref{eq:Zvonkin_modif}. This leads to a new equation
\begin{equation}\label{eq:virtualSDE}
X_t = X_0 + u(0, X_0) - u(t, X_t)  + \lambda  \int_0^t u(s, X_s) ds  +  \int_0^t \nabla u (s, X_s) dW_s +W_t.
\end{equation}
In \cite{flandoli_et.al14} a  {\it Virtual Solution} of \eqref{eq:SingSDE} is a process $X$ that solves  \eqref{eq:virtualSDE}. It can be shown that the solution is actually independent of $\lambda$.  Notice that  \eqref{eq:virtualSDE}  is not a standard SDE because of the term $u(t, X_t)$, so  one way to actually  find a virtual solution is to transform  \eqref{eq:virtualSDE} into a standard SDE using Zvonkin transformation $\phi(t,x) = x + u(t,x)$, namely solving and SDE for $Y_t:= \phi(t, X_t)$,  as explained in Section \ref{ssc:zvonkin}.

In Section \ref{sec:sol-singular-SDE}, the meaning associated to the SDE involving a distributional drift was formulated
in terms of rough martingale problem, under the inspiration of Stroock-Varadhan equivalence
between solutions in law of SDEs and classical martingale problem.
One natural question  concerns the possibility of
formulating a direct SDE in the form
\begin{equation} \label{eq:ProperSDE}
  X_t = X_0 +  W_t + A_t^X(b),\quad t \ge 0,
  \end{equation}
  where $A^X$, defined on  a space $B$ of distributions (for instance $B = C_T\C^{(-\be)+}$),
  extends continuously the map
$ g \mapsto \int_0^t g(s,X_s) ds, $ when $g: [0,T]\times \R^d \rightarrow \R$
is a H\"older continuous function.
In most cases, the stochastic process $A^X(g)$ has zero quadratic variation 
so that $X$ is a Dirichlet process,
or at least  satisfies martingale orthogonality, see \cite{BandiniRusso_RevisedWeakDir},  so that it is  a weak Dirichlet process.

 \begin{definition}[Local time $B$-solution]
   Let $\mathbb P$ be a probability measure on
   some measurable space $(\Omega, \mathcal F)$.
 Let ${\mathcal C}_c^{0+}$ be the space defined in Section \ref{sec:top-prelim}.
We say that a process $X$ is a \emph{local time  $B$-solution}
if the following holds.
\begin{enumerate}
  \item
The linear space $C_T { \mathcal C}_c^{0+}$ is densely embedded in $B$.
\item 
  The map from $  C_T {\mathcal C}_c^{0+}$ with values in the space of continuous processes
  equipped with the u.c.p. (uniform convergence in probability) metric topology, defined by 
\[
 l \mapsto \int_0^t l(s, X_s) ds
\]
admits a continuous extension to $B$, that 
we denote by $A^{X}$.
\end{enumerate}
\end{definition}

Previous notion was first introduced in dimension $d= 1$ in Definition 6.3. of
\cite{russo_trutnau07} under the inspiration
of Proposition 3.11 of \cite{frw1}. 
This was
 extended to the general case in
Definition 6.2 of 
\cite{issoglio_russo20}.
When the reference filtration is the canonical filtration of $W$ then
the solution can be called {\it strong solution}.

Whatever is the sense we give to \eqref{eq:ProperSDE}, we need to connect it with
the solution of rough martingale problem with distributional drift $b$ considered in previous sections;
in particular, for $\varphi \in \shd$ we need
to show that
\begin{equation} \label{eq:martPB}
  M^\varphi_t:= \varphi(t,X_t) - \varphi(0,X_0) - \int_0^t L \varphi(s,X_s) ds, t \ge 0,
\end{equation}
  is a local martingale.
  When $X$ is a Dirichlet process
  (and it is often the case),  one can apply It\^o's formula for finite quadratic variation processes, see e.g.\ \cite{fo, rv4}, to $\phi(t,X_t) $,
 where $\phi \in C^{1,2}([0,T] \times \R^d)$. Thus  we can  write
\begin{equation} \label{eq:follmer}
\phi(t,X_t) - \phi(0,X_0) = \int_0^t \nabla_x \phi(s,X_s) dW_s + \sha_t^X(\phi), t \ge 0, 
\end{equation}
where
$$ \sha_t^X(\phi) = \int_0^t (\partial_s \phi (s,X_s) - \frac{1}{2} \Delta \phi(s,X_s)) ds - \int_0^t \nabla_x \phi(s,X_s) d^- A^X_s(b),$$
where the latter integral is a forward integral (extending It\^o integral), see e.g.\ \cite{Russo_Vallois_Book}.
Since $\shd \cap  C^{1,2}$ is too small, it looks quite difficult to 
prove that \eqref{eq:martPB} is a local martingale for all $\varphi \in \shd$ using
 \eqref{eq:follmer}.
In  Corollary 6.11  of \cite{issoglio_russoMPb} (resp.\ Proposition 6.10 of \cite{russo_trutnau07} in the case $d=1$),
one shows that the rough martingale problem with distributional drift and
the notion of local time $B$-solution (with $B = C_T \shc^{(-\beta)+}$) are equivalent,
under some technical restrictions.

A close formulation to the notion of local time $B$-solution that can be found in the literature,  consists in giving a proper meaning to the (a priori only formal)
SDE \eqref{eq:SingSDE} by approximating the quantity $\int_0^t b(s,X_s)ds$
by $\int_0^t b^n(s,X_s)ds$, where $(b^n)$ is a sequence of smooth functions converging to $b$.
That notion first appeared 
(at least when the drift $b$ is time-homogeneous) in \cite{basschen}
and later was exploited by various authors such as
\cite{anzelletti24, athreya2020, goudenege23}. 
One says that $(X, W)$ defined on some filtered probability space $(\Omega, \shf, (\shf_t)_{t\leq T},\mathbb P)$
is a {\em limit solution} to \eqref{eq:SingSDE} if the following holds.
\begin{itemize}
\item $X= (X_t)$ is adapted to $\shf_t$ for all $t\in[0,T]$;
\item there exists an $\R^d$-valued process $K= (K_t)$ such that $X_t = X_0 + K_t + W_t$ for all $t\in [0,T]$;
\item $W= (W_t)$ is an $\R^d$-valued Brownian motion  with respect to  $(\shf_t)$;
\item for every sequence $(b^n)$ of smooth bounded functions converging to $b$ in $C_T \C^{-\beta}$ we have
        \begin{equation*}
        \sup_{t\in[0,T]} \bigg|\int_0^t b^{n}(s, X_s)ds-K_t\bigg|\underset{n\to\infty}{\to}0 \text{ in probability}.
        \end{equation*}
\end{itemize}

\medskip

Another notion of  solution to a singular  SDE, which has some similarities with the local time $B$-solution,  is the one introduced
in Definition 5.3 of
 \cite{GaleatiGerencser25}.
The authors consider the SDE
\[
X_t = X_0 +  \int_0^t b(s, X_s) ds + B^H_t,
\] 
where $b$ is a time-dependent Schwartz distribution and $B^H$ is a fractional Brownian motion.
The concept of solution needs the definition of
 operators similar to $\mathcal A$ from Lemma \ref{lm:ssl}, defined as non-linear Young integrals,
whose existence can be established via a stochastic sewing lemma,
see Section \ref{sec:ssl} for basic ideas.

\subsection{The non-linear Fokker-Planck PDE}\label{sec:singularFP}

In finding the solution of the McKean SDE \eqref{eq:MK_dist_SDE_intro}
a fundamental analytical tool is
the singular non-linear Fokker-Planck PDE,
whose solution will coincide with the marginal densities
of the solution of the McKean SDEs. 

We first formulate a standing assumption on a non-linear matrix-valued function $F$.
That assumption is not optimal but it  allows to better illustrate
the strategy.
\begin{assumption}\label{ass:F}
  Let $F:\mathbb R \to \mathbb R^{d\times m}$ and denote
  $\tilde F:\mathbb R \to \mathbb R^{d\times m}$ the
  matrix-valued function defined by $\tilde F(z) := z F(z)$.

  We suppose that both $F$ and $\tilde F$ are bounded
  of class $C^1$ with Lipschitz derivatives.
\end{assumption}

We also consider the following assumptions on a distribution $\shb$.

\begin{assumption}\label{ass:B}
Let $0<\beta<\frac12$ and $\shb \in C_T \shc^{(-\beta)+}(\R^m)$. In particular $\shb \in C_T\shc^{-\beta}(\R^m)$.
\end{assumption}

 $F$ and $\shb$  will compose the drift $b:=F(v)\shb$ of the McKean-Vlasov equation, which also appears in the non-linear Fokker-Plank PDE studied in this section.
The assumptions above will imply that $b$ satisfies Assumption \ref{ass:b}.
For a given $\shb$,
let us consider the Fokker-Planck PDE
\begin{equation}\label{eq:nonlin_sing_FP}
\left\{
\begin{array}{l}
\partial_t v = \frac12 \Delta v - { \rm div}(\tilde F(v)\shb),\\
v(0)=v_0.
\end{array}
\right.
\end{equation}
We highlight that,
for every $t \in [0,T]$,
 the pointwise product (see Section \ref{sc:Schauderpoint} \eqref{eq:bony} and \eqref{eq:bonyt})
$\tilde F(v(t,\cdot)) \shb(t, \cdot)$
is a well-defined element of $\shc^{-\beta}$,
when $\tilde F(v(t,\cdot))$ 
has at least  $\alpha$-H\"older regularity, for some $\alpha>\beta$.

\begin{definition}\label{def:weak_sol_FP}
We say that $v\in C_T\C^{\beta+}$ is a weak solution to \eqref{eq:nonlin_sing_FP} if for all $t \in [0,T]$ and  $\varphi\in\shs(\R^d)$ the equation
\begin{equation}
  \langle v(t), \varphi\rangle-\langle v_0,\varphi \rangle=\int_0^t \frac12 \langle v(s) , \Delta \varphi \rangle ds + \int_0^t  \langle \tilde F(v(s))\shb(s) ,\nabla \varphi \rangle ds 
\end{equation}
holds true.
\end{definition}
We remark that the test functions here are taken in the Schwartz space $\shs(\R^d)$
which are far from the domain $\shd$ of the martingale problem discussed in Section \ref{ssc:singularPDE}.
In particular $\shs(\R^d)$ is generally not included in $\shd$.
\begin{definition}\label{def:mild_sol_FP}
A  mild solution to \eqref{eq:nonlin_sing_FP} is a function $v\in\C_T\C^{\beta+}$ such that 
\begin{equation}\label{eq:mild_sol_FP}
v(t)= P_{t}v_0-\int_0^t P_{t-s}\Big({\rm div}(v(s)F(v(s))\shb(s))\Big)ds, \quad t\in [0,T].
\end{equation}
\end{definition}

\begin{proposition}[Equivalence -- Proposition 4.5 in \cite{issoglio_russoPDEa}] \label{lemma:WeakMild}
  Weak and Mild solutions for \eqref{eq:nonlin_sing_FP} are equivalent.
  \end{proposition}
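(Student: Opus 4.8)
The plan is to prove the equivalence by showing the two implications separately, following the classical strategy for passing between weak (distributional) and mild (variation-of-constants) formulations of a linear parabolic PDE, here applied with the \emph{frozen} coefficient: once $v$ is a candidate solution, the product $\tilde F(v)\shb$ is a fixed element of $C_T\shc^{-\beta}$, so \eqref{eq:nonlin_sing_FP} becomes the \emph{linear} inhomogeneous heat equation $\partial_t v = \frac12\Delta v - {\rm div}(\tilde F(v)\shb)$ with known right-hand side. The well-definedness of $\tilde F(v(s))\shb(s)$ in $\shc^{-\beta}$ for $v\in C_T\shc^{\beta+}$ uses Assumption \ref{ass:F} (which gives $\tilde F(v(s,\cdot))\in C_T\shc^{\alpha}$ for some $\alpha>\beta$, since composition with a $C^1$ Lipschitz-derivative function preserves Hölder regularity below $1$) together with the Bony estimate \eqref{eq:bony}--\eqref{eq:bonyt}. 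A symmetric remark applies to $v(s)F(v(s))\shb(s)$ appearing in the mild formulation.

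For the implication \textbf{mild $\Rightarrow$ weak}: suppose $v\in C_T\shc^{\beta+}$ satisfies \eqref{eq:mild_sol_FP}. Fix $\varphi\in\shs(\R^d)$ and pair both sides of \eqref{eq:mild_sol_FP} with $\varphi$. Using the self-adjointness of $P_t$ on the Schwartz pairing and the semigroup property, one rewrites $\langle P_t v_0,\varphi\rangle = \langle v_0, P_t\varphi\rangle$ and $\langle P_{t-s}({\rm div}(\cdot)),\varphi\rangle = -\langle \tilde F(v(s))\shb(s), \nabla P_{t-s}\varphi\rangle$; then one differentiates the resulting identity in $t$, using $\frac{d}{dt}P_t\varphi = \frac12\Delta P_t\varphi$ and the fundamental theorem of calculus for the Duhamel term. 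Integrating back from $0$ to $t$ and using $P_0 = {\rm Id}$ yields exactly the weak formulation of Definition \ref{def:weak_sol_FP}. The regularity needed to justify differentiation under the integral and the boundary term at $s=t$ comes from Schauder estimates (Lemma \ref{lm:schauder}) and the gradient-of-semigroup estimate (Lemma \ref{lm:nablaP}), which control $\|\nabla P_{t-s}\varphi\|$ against negative powers of $t-s$ that are integrable since $\beta<\tfrac12$.

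For the implication \textbf{weak $\Rightarrow$ mild}: suppose $v\in C_T\shc^{\beta+}$ is a weak solution. The standard trick is to test against the time-dependent function $\varphi(s,\cdot) = P_{t-s}\phi$ for a fixed $\phi\in\shs(\R^d)$ and $s\in[0,t]$, which solves the backward heat equation $\partial_s \varphi + \frac12\Delta\varphi = 0$. One must first extend the weak formulation, stated for time-independent test functions, to such time-dependent ones; this is done by a density/approximation argument (approximating $s\mapsto P_{t-s}\phi$ by step functions in $s$ with Schwartz values and passing to the limit, using continuity of all terms in $s$). Plugging $\varphi(s,\cdot)=P_{t-s}\phi$ into the (time-dependent) weak identity, the $\frac12\langle v(s),\Delta\varphi\rangle$ term cancels against the $\partial_s\varphi$ contribution, leaving $\langle v(t),\phi\rangle - \langle v_0,P_t\phi\rangle = \int_0^t \langle \tilde F(v(s))\shb(s),\nabla P_{t-s}\phi\rangle ds = -\int_0^t\langle P_{t-s}{\rm div}(\tilde F(v(s))\shb(s)),\phi\rangle ds$. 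Since $\phi\in\shs$ is arbitrary and $\shs$ is dense, this is precisely \eqref{eq:mild_sol_FP} (recalling $\tilde F(v)=vF(v)$).

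The main obstacle I expect is the rigorous justification of using the time-dependent test function $\varphi(s,\cdot)=P_{t-s}\phi$ in the weak formulation: one needs the approximation argument to be compatible with the low regularity of the singular term $\tilde F(v(s))\shb(s)\in\shc^{-\beta}$, so that every pairing $\langle \tilde F(v(s))\shb(s),\nabla P_{t-s}\phi\rangle$ makes sense and depends continuously on $s$ up to the endpoint $s=t$ (where $\nabla P_{t-s}\phi$ blows up, but only like $(t-s)^{-1/2}$, which is integrable — this is where $\beta<\tfrac12$ is essential, since one actually needs $(t-s)^{-(1+\beta)/2}$ from Lemma \ref{lm:nablaP} to be integrable). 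Everything else is routine once the product $\tilde F(v)\shb$ is recognised as a fixed $C_T\shc^{-\beta}$ datum and the linear theory (Schauder estimates, semigroup continuity \eqref{eq:PcontC}) is invoked; indeed, as the parenthetical reference indicates, this is Proposition 4.5 in \cite{issoglio_russoPDEa} and the argument there can be cited or reproduced verbatim.
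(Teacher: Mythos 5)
Your argument is correct and is essentially the standard weak--mild equivalence proof (freeze the product $\tilde F(v)\shb$ as a fixed $C_T\shc^{-\beta}$ datum, then test the mild formula against $\varphi\in\shs$ for one direction and test the weak identity against $P_{t-s}\phi$ via a time-dependent-test-function approximation for the other), which is exactly what the cited Proposition 4.5 of \cite{issoglio_russoPDEa} does; the review paper itself offers no proof and simply defers to that reference. One minor remark: since $\phi\in\shs$, the quantity $\nabla P_{t-s}\phi$ does not actually blow up as $s\to t$ (it converges to $\nabla\phi$); the singularity $(t-s)^{-(1+\beta+\alpha)/2}$ and the constraint $\beta<\tfrac12$ enter rather when one applies $P_{t-s}$ to ${\rm div}(\tilde F(v(s))\shb(s))\in\shc^{-1-\beta}$ to make the mild formula land in $\shc^{\alpha}$, so your argument is if anything easier than you feared.
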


  \begin{theorem}[Existence -- Theorem 3.7 in \cite{issoglio_russoMK}]
    Let Assumptions \ref{ass:B} and \ref{ass:F} hold. If $v_0\in \C^{\alpha}$ with $\al\in(\be,1-\be)$ then there exists a
    unique weak
    solution $v \in C_T \C^{\alpha}$.
  \end{theorem}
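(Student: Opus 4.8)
The plan is to prove well-posedness via a fixed-point argument on the mild formulation \eqref{eq:mild_sol_FP}, which by Proposition \ref{lemma:WeakMild} is equivalent to the weak formulation. First I would define the solution map $\mathcal I$ sending a function $w \in C_T\C^{\alpha}$ to
\[
\mathcal I(w)(t) := P_t v_0 - \int_0^t P_{t-s}\big({\rm div}(\tilde F(w(s))\shb(s))\big)\,ds, \qquad t \in [0,T],
\]
where $\tilde F(z) = zF(z)$ as in Assumption \ref{ass:F}, and show that $\mathcal I$ maps a suitable closed ball $E^\alpha_{\rho, M}$ (see \eqref{eq:rho_ball}) into itself and is a contraction there with respect to the equivalent metric $d_\rho$ from \eqref{eq:drho}. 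The crucial structural point, already highlighted in the excerpt after \eqref{eq:nonlin_sing_FP}, is that since $\alpha > \beta$, the pointwise product $\tilde F(w(s))\shb(s)$ is a well-defined element of $\C^{-\beta}$ by Bony's estimate \eqref{eq:bony}, with $\|\tilde F(w(s))\shb(s)\|_{-\beta} \le c\|\tilde F(w(s))\|_{\alpha}\|\shb(s)\|_{-\beta}$.

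The self-mapping estimate goes as follows. Using the Schauder estimate \eqref{eq:Pt} (or rather its gradient version \eqref{eq:nablaP}) to handle the divergence and the smoothing $P_{t-s}$, one bounds
\[
\|\mathcal I(w)(t)\|_{\alpha} \le \|P_t v_0\|_{\alpha} + \int_0^t \|\nabla P_{t-s}(\tilde F(w(s))\shb(s))\|_{\alpha}\,ds \le c\|v_0\|_{\alpha} + c\int_0^t (t-s)^{-\theta}\,\|\tilde F(w(s))\shb(s)\|_{-\beta}\,ds,
\]
for a suitable $\theta = \theta(\alpha,\beta) \in (0,1)$ with $2\theta - 1 = \alpha + \beta$ (this requires $\alpha + \beta < 1$, which is exactly the hypothesis $\alpha < 1-\beta$). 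Since $\tilde F$ is bounded (Assumption \ref{ass:F}), $\|\tilde F(w(s))\|_{\alpha}$ is controlled — here one needs the composition estimate that $z \mapsto \tilde F(z)$ maps $\C^{\alpha}$ into $\C^{\alpha}$ with a bound depending on the $C^1$-norm and Lipschitz constant of $\tilde F$ together with $\|w(s)\|_\alpha$ (valid for $0 < \alpha < 1$). Plugging into the $\rho$-weighted norm \eqref{eq:rho_norm}, the integral $\int_0^t (t-s)^{-\theta} e^{-\rho(t-s)}\,ds$ can be made arbitrarily small by taking $\rho$ large, which both closes the ball for appropriate $M$ and yields the contraction after subtracting two instances; the difference $\tilde F(w_1(s))\shb(s) - \tilde F(w_2(s))\shb(s)$ is estimated using the Lipschitz property of $\tilde F$ (and its derivative) in $\C^\alpha$, giving $\|\cdot\|_{-\beta} \le c\|w_1(s) - w_2(s)\|_\alpha \|\shb(s)\|_{-\beta}$. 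A Banach fixed-point argument on $(E^\alpha_{\rho,M}, d_\rho)$ then produces a unique fixed point $v \in C_T\C^\alpha$.

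The main obstacle I anticipate is the composition/Nemytskii estimate: controlling $\|\tilde F(w)\|_{\C^\alpha}$ and $\|\tilde F(w_1) - \tilde F(w_2)\|_{\C^\alpha}$ in terms of $\|w\|_{\C^\alpha}$ requires care because $\C^\alpha$ is not an algebra when $\alpha \le 0$ and composition with nonlinear functions in Hölder–Besov spaces needs the paraproduct machinery or a direct difference-quotient argument; this is precisely where Assumption \ref{ass:F} (boundedness of $F$ and $\tilde F$, $C^1$ with Lipschitz derivatives) is used to its full strength, and it is the reason the statement restricts to $\alpha < 1$. A secondary technical point is verifying continuity in time of $\mathcal I(w)$, i.e.\ that $\mathcal I(w) \in C_T\C^\alpha$ and not merely bounded — this follows from \eqref{eq:PcontC} applied to $P_t v_0$ together with dominated-convergence-type arguments on the Duhamel integral, using the integrability of $(t-s)^{-\theta}$. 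Finally, uniqueness in the larger space $C_T\C^{\beta+}$ (rather than just in the ball) follows by a standard bootstrap: any weak — hence mild — solution lies in some $C_T\C^{\alpha'}$ with $\alpha' > \beta$, and a Grönwall-type argument on $d_\rho$ over small time intervals, iterated to cover $[0,T]$, forces it to coincide with the fixed point.
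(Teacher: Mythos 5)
Your proposal matches the paper's argument: both reduce to the mild formulation via Proposition \ref{lemma:WeakMild} and run a Banach fixed-point argument, with the contraction obtained from the Schauder/Bernstein estimates of Lemma \ref{lm:schauder} and \eqref{eq:nablaP}, Bony's product estimate \eqref{eq:bony}, the Lipschitz continuity of the Nemytskii map $f \mapsto \tilde F(f)$ on $\C^\alpha$ (Proposition 3.1 in \cite{issoglio19}, which is exactly the composition estimate you flag as the main obstacle), and the $\rho$-weighted norm \eqref{eq:rho_norm}; your accounting of the exponent $\theta=(1+\alpha+\beta)/2<1$ also reproduces the paper's explanation of the constraint $\alpha\in(\beta,1-\beta)$. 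No substantive differences.
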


  The proof of this theorem relies on the equivalence of mild and weak formulations stated in Proposition \ref{lemma:WeakMild},
  so that it is enough to concentrate on a mild solution.
  To make use of Banach fixed point theorem, one needs the solution map to
  be contractive.
This happens because the operator $f \mapsto \tilde F(f)$ can be shown to be Lipschitz on the Besov spaces $\C^{\al}$ for $\al\in(0,1)$, see
Proposition 3.1 in \cite{issoglio19}. The proof of this makes use of Schauder
estimates, see Lemma \ref{lm:schauder}.
Indeed, if  $ \tilde F(v)\shb \in \C^{-\be}$, then the divergence term will be an element of the space $\C^{-(\be+1)}$, so after the application of the semigroup one obtains an element in $\C^{-\be+1-\epsilon}$, for every $\epsilon > 0$.
Consequently  the fixed point solution map can be found when $\alpha\in(\beta, 1-\beta)$,
which explains
the constraint on the regularity parameter $\al\in(\beta, 1-\beta)$, w.r.t $x$.

We state now a continuity result, whose item (ii)
will be used for the proof of existence  of
the McKean rough martingale problem, see Theorem \ref{thm:EUMcKean} below.
 \begin{lemma}[Continuity -- Proposition 4.4 in \cite{issoglio_russoMK}]\label{lm:cont_reg_PDE}  \textcolor{white}{a}  
 \begin{itemize}
  \item[(i)]  Let Assumption \ref{ass:F} hold and $\shb^1, \shb^2$ fulfilling  Assumption \ref{ass:B}.
Let  
  $v^1$ (resp.\ $v^2$) be a  mild solution of \eqref{eq:nonlin_sing_FP} with $\shb=\shb^1$ (resp.\ $\shb=\shb^2$).
  For any   $\alpha\in (\beta,1-\beta)$ such that $v^1,v^2\in C_T \mathcal C^\alpha$, there exists a function
    $\ell_\alpha:\R^+\times \R^+\to \R^+$, increasing in the second variable,
     such that 
\[
\|v^1(t) - v^2(t)\|_{\alpha}  \leq \ell_\alpha(\|v_0\|_\alpha, \|\shb^1\| \vee\|\shb^2\|  )  \| \shb^1 - \shb^2 \|_{C_T\mathcal C^{-\beta}},
\]
 for all $t\in[0,T]$.
\item[(ii)]  Let $\shb$, and a sequence $(\shb^n)$ 
fulfilling Assumption \ref{ass:B}. 
 Let $v^n$ be a mild solution of   \eqref{eq:nonlin_sing_FP} with $\shb=\shb^n$ and $v$ be a mild solution of \eqref{eq:nonlin_sing_FP}.
If $\shb^n\to \shb$ in  $C_T \mathcal C^{-\beta}$ then $v^n \to v$ in $C_T \mathcal C^{\alpha}$ for some $\alpha>\beta$.
\end{itemize} 
\end{lemma}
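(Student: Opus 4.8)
The plan is to argue entirely from the mild formulation \eqref{eq:mild_sol_FP}: first establish the quantitative stability bound of item (i) by a weakly singular Gr\"onwall estimate, and then deduce item (ii) directly from it.

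For (i), fix $\alpha\in(\beta,1-\beta)$ with $v^1,v^2\in C_T\mathcal C^\alpha$ and subtract the two mild formulations (which share the initial datum $v_0$); recalling $\tilde F(z)=zF(z)$ this gives
\begin{equation*}
v^1(t)-v^2(t)=-\int_0^t P_{t-s}\,{\rm div}\Big(\tilde F(v^1(s))\shb^1(s)-\tilde F(v^2(s))\shb^2(s)\Big)\,ds,
\end{equation*}
and I would split the integrand as $\big(\tilde F(v^1(s))-\tilde F(v^2(s))\big)\shb^1(s)+\tilde F(v^2(s))\big(\shb^1(s)-\shb^2(s)\big)$. Two standing facts feed the estimate: the a priori bound from the fixed-point construction behind the existence result (Theorem 3.7 in \cite{issoglio_russoMK}), $\|v^i\|_{C_T\mathcal C^\alpha}\le c_0(\|v_0\|_\alpha,\|\shb^i\|)$ with $c_0$ increasing in the second argument; and the fact, under Assumption \ref{ass:F}, that $f\mapsto\tilde F(f)$ is Lipschitz on bounded balls of $\mathcal C^\alpha$ (Proposition 3.1 in \cite{issoglio19}), so that $\|\tilde F(v^1(s))-\tilde F(v^2(s))\|_\alpha\le C\|v^1(s)-v^2(s)\|_\alpha$ and $\|\tilde F(v^2(s))\|_\alpha\le C'$, with $C,C'$ depending only on $\|v_0\|_\alpha$ and $\|\shb^1\|\vee\|\shb^2\|$.

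Then I would estimate the right-hand side: by Bony's product estimate \eqref{eq:bony} both summands lie in $\mathcal C^{-\beta}$, with $\|(\tilde F(v^1(s))-\tilde F(v^2(s)))\shb^1(s)\|_{-\beta}\le c\,C\,\|v^1(s)-v^2(s)\|_\alpha\,\|\shb^1\|$ and $\|\tilde F(v^2(s))(\shb^1(s)-\shb^2(s))\|_{-\beta}\le c\,C'\,\|\shb^1-\shb^2\|_{C_T\mathcal C^{-\beta}}$; applying ${\rm div}\circ P_{t-s}$ via Lemma \ref{lm:nablaP} (equivalently Bernstein \eqref{eq:nabla} followed by Schauder \eqref{eq:Pt}) with $\theta=\tfrac{\alpha+\beta+1}{2}$, which lies in $(0,1)$ precisely because $\alpha<1-\beta$, and writing $g(t):=\|v^1(t)-v^2(t)\|_\alpha$, yields
\begin{equation*}
g(t)\le c\int_0^t (t-s)^{-\theta}\Big(\|\shb^1\|\,g(s)+C'\,\|\shb^1-\shb^2\|_{C_T\mathcal C^{-\beta}}\Big)\,ds .
\end{equation*}
Since $\theta<1$ the kernel is integrable, so iterating this Volterra inequality (or, equivalently, working in the $\rho$-equivalent norm \eqref{eq:rho_norm} with $\rho$ large) gives $g(t)\le C_T\|\shb^1-\shb^2\|_{C_T\mathcal C^{-\beta}}$ for all $t\in[0,T]$, with $C_T$ depending on $T,\theta,C,C',\|\shb^1\|$ and hence only on $\|v_0\|_\alpha$ and $\|\shb^1\|\vee\|\shb^2\|$. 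Collecting the constants into a single $\ell_\alpha$, increasing in the second slot, proves (i).

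For (ii): since $\shb^n\to\shb$ in $C_T\mathcal C^{-\beta}$ one has $\sup_n\|\shb^n\|=:R<\infty$, so the a priori bound places $v$ and all $v^n$ in a common ball of $C_T\mathcal C^\alpha$ for some $\alpha>\beta$; applying (i) with $(\shb^1,\shb^2)=(\shb^n,\shb)$ and using monotonicity of $\ell_\alpha$ gives $\sup_{t\le T}\|v^n(t)-v(t)\|_\alpha\le\ell_\alpha(\|v_0\|_\alpha,R)\,\|\shb^n-\shb\|_{C_T\mathcal C^{-\beta}}\to0$. I expect the main obstacle to be the bookkeeping needed to make the constants depend on $\|v_0\|_\alpha$ and $\|\shb^i\|$ in the claimed monotone way (this is where one must invoke the a priori bounds on the fixed point and the precise form of the Lipschitz estimate for the composition operator), together with making the weakly singular Gr\"onwall step rigorous, since the kernel $(t-s)^{-\theta}$ is unbounded so ordinary Gr\"onwall does not apply directly.
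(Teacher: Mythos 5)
Your proposal is correct and follows essentially the same route as the cited source: subtracting the mild formulations, splitting the difference of the products $\tilde F(v^i)\shb^i$, and combining the Bony product estimate, Bernstein's inequality and the Schauder bound with $\theta=\tfrac{\alpha+\beta+1}{2}<1$ (which is exactly where $\alpha<1-\beta$ enters), before closing with a weakly singular Gr\"onwall argument or, equivalently, the $\rho$-equivalent norm \eqref{eq:rho_norm} that the paper uses for the fixed-point construction. The points you flag as delicate (the a priori bound $\|v^i\|_{C_T\mathcal C^\alpha}\le c_0(\|v_0\|_\alpha,\|\shb^i\|)$ from the existence theorem and the local Lipschitz property of $f\mapsto\tilde F(f)$ on $\mathcal C^\alpha$) are precisely the ingredients the original proof relies on, so the argument is complete as outlined.
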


\subsection{McKean SDEs with density dependence}\label{sec:McKean-point}

We  recall the McKean-Vlasov system \eqref{eq:MK_dist_SDE_intro} introduced at the beginning of the section, that is  
\begin{equation}\label{eq:MK_dist_SDE}
\left\{
\begin{array}{l}
X_t=X_0+\int_0^t F(v(s,X_s))\shb(s,X_s)ds+W_t,\\
v(t,\cdot) \text{ is the law density of } X_t,\\
X_0\sim \nu,
\end{array}
\right.
\end{equation}
where   $W$ is a $d$-dimensional Brownian motion, $F$ is a non-linear function $F:\mathbb R \to \mathbb R^{d\times m}$, the singular part $\shb$ of the drift is an $m$-dimensional vector whose components are elements of $ C_T\C^{(-\be)+}(\R^d)$ and $\nu $ is the initial law density.
In what follows we will omit the dimensions $d$ and $m$ for brevity. 
The rigorous definition  of solution to \eqref{eq:MK_dist_SDE} is given below.
\begin{definition}[Definition 5.1 in \cite{issoglio_et.al24}, Definition 6.2 in \cite{issoglio_russoMK}]\label{def:MK_MP_Alt}
  Let $\Omega_T$ be the canonical space  $C([0,T];\R^d)$ equipped with
  its Borel $\sigma$-field $\shf_T$ and $Z$ be the canonical process.
  A solution (in law) of the McKean problem \eqref{eq:MK_dist_SDE} 
  is  a Borel  probability measure $\P$ on  $\Omega_T$,
  such that the following holds.
\begin{itemize}
\item[-] The law $\shl^\P_{Z_t}$ admits a density denoted by $v(t,\cdot)$;
\item[-] $v: [0,T ]\times\R^d  \to \R$  is an element of $C_T \mathcal C^{\beta+}$;
\item[-] $\P$ solves the rough martingale problem as in Definition \ref{def:MK_MP} with distributional drift $b(t,\cdot) :=F(v(t,\cdot))\shb(t,\cdot)$.
\end{itemize}
We say that the McKean problem  \eqref{eq:MK_dist_SDE}, denoted by McKean--rMP$(F(v)\shb,\nu)$ for shortness, admits {\it uniqueness} if, whenever
we have two solutions $\P$
and  $\bar \P$,  then $\P = \bar \P$.

By a slight abuse of notation, when $\nu$ has a density $v_0$, we denote the McKean problem  \eqref{eq:MK_dist_SDE} by McKean--rMP$(F(v)\shb,v_0)$.
\end{definition}
Below we state the main result of existence and uniqueness of a solution to the McKean problem  \eqref{eq:MK_dist_SDE}.
\begin{theorem}[Existence and Uniqueness -- Theorem 6.3 in \cite{issoglio_russoMK}] \label{thm:EUMcKean}
Let Assumption \ref{ass:F} hold and let $\shb$ fulfilling
   Assumption \ref{ass:B}. Let the initial probability $\nu$ have a density $v_0$  such that $v_0 \in\C^{\be+}$. 
Then  there exists a solution $\P$ to the McKean problem  \eqref{eq:MK_dist_SDE}. Furthermore, the McKean problem \eqref{eq:MK_dist_SDE} admits uniqueness.
\end{theorem}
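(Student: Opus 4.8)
The plan is to combine the three building blocks established earlier in the section: well-posedness of the singular non-linear Fokker-Planck PDE \eqref{eq:nonlin_sing_FP} (Theorem from Section~\ref{sec:singularFP}), well-posedness of the rough martingale problem rMP$(b,\nu)$ for a \emph{fixed} distributional drift $b$ (Theorem~\ref{thm:MP}), and the continuity results (Theorem~\ref{thm:contMP} for the martingale problem and Lemma~\ref{lm:cont_reg_PDE}(ii) for the PDE). For \textbf{existence}, the first step is to solve the non-linear Fokker-Planck PDE \eqref{eq:nonlin_sing_FP}: since $v_0\in\C^{\beta+}$, one can pick $\alpha\in(\beta,1-\beta)$ with $v_0\in\C^\alpha$ and obtain a unique weak (equivalently mild, by Proposition~\ref{lemma:WeakMild}) solution $\bar v\in C_T\C^\alpha$. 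Then I would \emph{freeze} the drift: set $b(t,\cdot):=F(\bar v(t,\cdot))\shb(t,\cdot)$, which by Assumptions~\ref{ass:F} and \ref{ass:B} together with the pointwise-product estimate \eqref{eq:bonyt} lies in $C_T\C^{(-\beta)+}$, hence satisfies Assumption~\ref{ass:b}. By Theorem~\ref{thm:MP} there is a unique solution $\P$ to rMP$(b,\nu)$. It remains to check that the time-marginals $\shl^\P_{Z_t}$ have densities and that these densities coincide with $\bar v(t,\cdot)$: this is exactly where the Figalli-Trevisan superposition principle (Theorem~\ref{thm:figalli}) enters, applied to the \emph{linear} Fokker-Planck PDE with the frozen coefficient. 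More precisely, by the equivalence in Theorem~\ref{thm:XY} the process $Z$ (after Zvonkin transform) has marginals solving a linear Fokker-Planck equation; one identifies these with $\bar v$ via uniqueness of that linear PDE, using that $\bar v$ also solves the linearised equation. Once $v(t,\cdot)=\bar v(t,\cdot)$ is established, the triple $(\Omega_T,\P,v)$ satisfies all three bullets of Definition~\ref{def:MK_MP_Alt}, so $\P$ is a solution to the McKean problem.

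For \textbf{uniqueness}, suppose $\P^1$ and $\P^2$ are two solutions with associated densities $v^1,v^2\in C_T\C^{\beta+}$. Each $v^i$ is then a weak solution of the non-linear Fokker-Planck PDE \eqref{eq:nonlin_sing_FP}: indeed, by the definition of solution, $\P^i$ solves rMP$(F(v^i)\shb,\nu)$, and applying the (rough) martingale-problem identity to appropriate test functions $\varphi\in\shd$—or rather passing through the equivalent SDE/PDE link as in the existence part—shows that the marginal densities satisfy \eqref{eq:nonlin_sing_FP}. Since $v_0\in\C^{\beta+}$ gives some $\alpha\in(\beta,1-\beta)$ with $v^1,v^2\in C_T\C^\alpha$, and the non-linear PDE is well-posed in that class, we conclude $v^1=v^2=:v$. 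But then both $\P^1$ and $\P^2$ solve the \emph{same} rough martingale problem rMP$(F(v)\shb,\nu)$ with a fixed drift, and Theorem~\ref{thm:MP} gives $\P^1=\P^2$.

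\textbf{Main obstacle.} The delicate point is the identification $v=\bar v$ in the existence argument, i.e.\ showing that the time-marginals of the solution $\P$ of the frozen rough martingale problem really admit densities and that those densities agree with the PDE solution $\bar v$ one started from. This requires matching the probabilistic object (the law of the canonical process, accessed via the Zvonkin-transformed SDE~\eqref{eq:Y} and Theorem~\ref{thm:XY}) with the analytic object ($\bar v$) through a uniqueness statement for the \emph{linear} singular Fokker-Planck PDE with the frozen drift $F(\bar v)\shb$—a PDE of the type studied in Section~\ref{sec:singularFP} but now with a given time-dependent distributional coefficient. One must verify that $\bar v$ indeed solves this linearised equation and that no other measure-valued solution (in the relevant class of densities in $C_T\C^{\beta+}$) exists; the continuity results in Lemma~\ref{lm:cont_reg_PDE}(ii) and Theorem~\ref{thm:contMP} are the natural tools to push a smooth-drift approximation $\shb^n\to\shb$ through both the PDE and the martingale problem simultaneously and pass to the limit, which is the route I would actually follow rather than a direct superposition argument.
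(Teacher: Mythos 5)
Your proposal is correct and follows essentially the same route as the paper: solve the non-linear Fokker-Planck PDE, freeze the drift $b=F(\bar v)\shb$ and solve rMP$(b,\nu)$, then identify the marginals with $\bar v$ by mollifying $\shb^n\to\shb$, applying the Figalli--Trevisan superposition principle at each regularised level, and passing to the limit via Lemma~\ref{lm:cont_reg_PDE}(ii) and Theorem~\ref{thm:contMP}; uniqueness likewise reduces to well-posedness of the non-linear PDE plus Theorem~\ref{thm:MP}. You correctly flag in your final paragraph that the direct superposition argument fails for the singular drift and that the regularisation route is the one to follow, which is exactly what the paper does.
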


The strategy of the proof for existence is summarised below.
\begin{description}
\item[Step 1] Solve the non-linear Fokker-Planck equation \eqref{eq:nonlin_sing_FP} to get a candidate law density $v(t,\cdot)$ for the unknown solution $Z_t$. Here results from Section \ref{sec:singularFP} are used.
\item[Step 2] Plug $v(t,\cdot)$ into the drift of the McKean--rMP$(F(v)\shb,v_0)$ \eqref{eq:MK_dist_SDE}, namely define $b:=F(v)\shb$, and solve the  rough martingale problem rMP$(b, v_0)$.  Here results from Section \ref{sec:linearSDE} are used.  
\item[Step 3]  The idea now, is to use Figalli-Trevisan's superposition principle.
\end{description}
  The last step cannot be done directly because the coefficient $b$ is too singular to apply Figalli-Trevisan's superposition principle, so we need to go through the regularised versions of the SDEs and PDEs.  In particular, we mollify $\shb$ with a smooth sequence $(\shb^n)$ and consider the unique solution $v^n$ of the regularised Fokker-Planck equation \eqref{eq:nonlin_sing_FP} with $b^n:= F(v^n)\shb^n$, and the unique solution  $\P^n$ of the rough martingale problem rMP$(b^n, v_0)$. For each fixed $n$, Figalli-Trevisan  superposition principle (Theorem \ref{thm:figalli}) applies and so, given the solution $v^n$ of the regularised and linearised\footnote{we interpret the drift $v^n F(v^n)\shb^n$ as $v^n b^n$} Fokker-Planck equation,  we know that there exists a probability $\bar\P^n$ which is a  solution  to rMP$(b^n,v_0)$ such that  $\shl^{\bar \P^n}_{Z_t}(dx)  = v^n(t,x)dx$.  By uniqueness of the rough martingale problem rMP$(b^n, v_0)$ we have $ \bar\P^n = \P^n$, therefore $\shl^{\P^n}_{Z_t}(dx)  = v^n(t,x)dx$.
To conclude, we must take the limit on both sides as $n\to\infty$ to show that   $\shl^\P_{Z_t} (dx) = v(t, x)dx$, which implies that $\P$ is a solution of the McKean-rMP$(F(v)\shb, v_0)$. The right-hand side limit  $v^n\to v$ is guaranteed by continuity results on the Fokker-Planck equation (Lemma \ref{lm:cont_reg_PDE} (ii)) which crucially also ensures that $v$ is the unique solution to the non-linear Fokker-Planck equation \eqref{eq:nonlin_sing_FP}.  The left-hand side limit   $\shl^{\P^n}_Z \to \shl^\P_Z $ holds  by continuity results on the rough martingale problem (Theorem \ref{thm:contMP}) under the assumption that $b^n =F(v^n)\shb^n  \to F(v)\shb = b$ in $C_T\shc^{-\beta}$, which is true thanks to the continuity properties of the pointwise product.

\medskip Concerning {\em uniqueness} of the  McKean--rMP$(F(v)\shb,v_0)$, the idea is to show that, given a solution $\P$, its marginals $v(t,\cdot)$
fulfill the non-linear Fokker-Planck equation which is well-posed, see Section \ref{sec:singularFP}. Finally the uniqueness reduces to the uniqueness of the rough martingale problem rMP$(b, v_0)$.

\subsubsection*{A convolutional-type drift}

So far, we have focused on singular non-linear  McKean type SDE  with drift
$F(v) b $ being  a Schwartz distribution,  $v$ being the law density, $F(v)$ being a function and the product $F(v) b $ being indeed a pointwise product.
In the literature other types of singular McKean type drifts have been considered, such as those   of convolutional type $b * v$.
We have mentioned this kind of convolutional drifts in Section \ref{sec:LpLq_p} in the $L^p$-$L^q$ framework.
When $b$ is a (good enough) distribution, 
the overall drift is a 
function due to the smoothing properties of
the convolution. 
In  \cite{chaudru_jabir_menozzi22, chaudru_jabir_menozzi23} the coefficient $b$ is an element of $L^r$ in time and of a negative Besov space in space and the noise is an $\alpha$-stable process with $\alpha\in(1,2]$, the case $\alpha=2$ corresponding to Brownian  noise.
Exploiting the regularisation effect of the convolution, the McKean SDE  can be formulated as a true stochastic differential equation, without the need of martingale problem.
In that paper the authors prove existence in law for the McKean-Vlasov SDE, and absolute continuity with respect to the Lebesgue measure for the law of the solution, extending their results obtained in the non-McKean framework \cite{chaudru_menozzi}, 
see  also
Section \ref{sec:sol-singular-SDE}.
Moreover under stronger assuptions they also prove strong well-posedness. 
We draw the attention on the recent article  \cite{GaleatiGerencser25}, where the noise is a fractional Brownian motion, following the ideas of \cite{mayorcas}. 
It  first focuses on the non-McKean problem 
 via the stochastic sewing lemma, see Section \ref{sec:ssl}.
Later  those results are applied to  a class of McKean-Vlasov SDEs (called distribution dependent SDEs therein) with additive fractional Brownian noise.
The drift must be Lipschitz in the Wasserstein topology but can be very singular in the Besov sense in space.
We remark that they have to give a meaning to the equation due to the distributional nature of the drift, and they do so by extending an integral operator
analogously to the case of
 the local time $B$-solution, see the end of Section \ref{sec:solution_sing_SDE}.

\subsection{Singular degenerate McKean-Vlasov SDEs}
\label{sc:sing_deg_McK}

In this section we follow \cite{issoglio_et.al24}  and review a generalisation of the McKean SDE studied in Section \ref{sec:McKean-point},
where we allow the diffusion matrix to be degenerate, in particular non-invertible. 
From a {\em regularisation by noise} perspective, this case is more difficult to solve, because the effect of the noise is present only in some directions of $\R^d$. 
Let us denote $X= (V,U)$, where the first $l$ components $V$ have a non-zero noise, while the last $d-l$ components $U$ are noise-free.
The case considered in \cite{issoglio_et.al24} is of the form
\begin{equation}\label{eq:mkv_deg}
\begin{cases}
d V_t = \Big( F\big( v(t, X_t)  \big) \mathcal B(t, X_t) +  B_0 X_t \Big) d t   +     d W_t       \\
d U_t =  B_1 X_t d t \\
v(t,\cdot) \text{ is the law density of  } X_t,\\
X_0=(V_0,U_0) \sim\nu \text{ with }\nu(dx) = v_0(x) dx .
\end{cases}
\end{equation}
Here  
$W$ is an $l$-dimensional Brownian motion, $ B_0\in\mathcal{M}^{l \times d}$ and 
$ B_1\in\mathcal{M}^{(d-l)\times d}$ are constant matrices, and $F:[0,+\infty) \to \mathcal{M}^{l\times m}$ is a  matrix-valued Borel function fulfilling Assumption \ref{ass:F}. The singular term $\mathcal B$
fulfills
 Assumption \ref{ass:B} as in Section \ref{sec:McKean-point}
 and this is an $m$-dimensional vector of functions of time taking values in a suitable Besov space. Notice that here we have the linear terms $ B_0$ and $ B_1$
 which were not present in \eqref{eq:MK_dist_SDE}. This linear term $BX:= \binom{B_0}{B_1} X$ is fundamental to be able to obtain well-posedness of the equation, even in the case
 when $\mathcal B$ is smooth, and the condition that one requires is known as {\em hypoellipticity} (see e.g.\ \cite{hoermander} or Remark 2.1 of \cite{issoglio_et.al24})
 of the Kolmogorov operator $\mathcal K$ associated to SDE \eqref{eq:mkv_deg} when $F\equiv 0$. We  mention that this condition is equivalent to requiring a special block form for the matrix $ B_1$. Further details and precise references can be found in Section 1.3 in \cite{issoglio_et.al24}.

The system  \eqref{eq:mkv_deg}  is a generalisation of the so-called {\em kinetic} equation, that is obtained when $d=2l$ and $U,V$ have the same dimension $l$ and
represent position and velocity, respectively, of a particle in the phase-space $\R^{2l}$.
This setting is the one considered for example in \cite{hao2021singular} 
and where the coefficients involve some rough distributions
as in \cite{issoglio_et.al24}. 
In the latter case the hypoellipticity condition is fulfilled.

The general roadmap to show well-posedness of equation \eqref{eq:mkv_deg} is the same that has been illustrated in details in Sections \ref{sec:top-prelim}--\ref{sec:McKean-point}. This plan however, needs extensive preliminary work to retrieve the  setting from Section \ref{sec:top-prelim}, which includes the definition of Besov spaces and Schauder's estimates in the degenerate case.
As already mentioned, the main problem is that the noise acts only in some directions, thus the smoothing effect of the noise is compromised. 
To ensure that the noise is propagated in all directions, hence affects all components of $X$,  we  impose the block form condition on the linear drift $ B_1$. This leads to a different rescaling of the components, which translates into the so-called {\em anisotropic norm}. For more details see Section 2.1 in \cite{issoglio_et.al24}.\footnote{As an example, the (non-McKean) simplest  kinetic equation is obtained when $d=2, l=1, F\equiv 0$ and $B=\binom 01$ to get
\[
\begin{cases}
d V_t =     d W_t       \\
d U_t = V_t d t \\
X_0=(V_0,U_0) \sim\nu. 
\end{cases}
\]
In this case, intuitively one has the rescaling $(\Delta V_t)^2 \approx (\Delta W_t)^2 \approx \Delta t$ for the first component and  $(\Delta U_t)^2 \approx (W \Delta t)^2\approx (\Delta t)^3$ for the second component. The anisotropic norm is given by $|(u,v)|_B =|v|^1+ |u|^{\frac13}$. }

The anisotropic norm must be embedded in the setting we work in, in particular one needs to define the anisotropic version of the Besov spaces  defined in Definition \ref{def:Besov} point 2., which are denoted by $\mathcal C^\gamma_B$ and are called {\em anisotropic Besov-H\"older spaces}. The construction follows the
 Definition \ref{def:Besov}, which is the one of standard Besov spaces,
 provided that anisotropic anuli
are used to define the dyadic partition of unity $\{\varphi_j\}$, see also Figure \ref{fig:an}. 
\begin{figure}
\centering
\includegraphics[width=300pt]{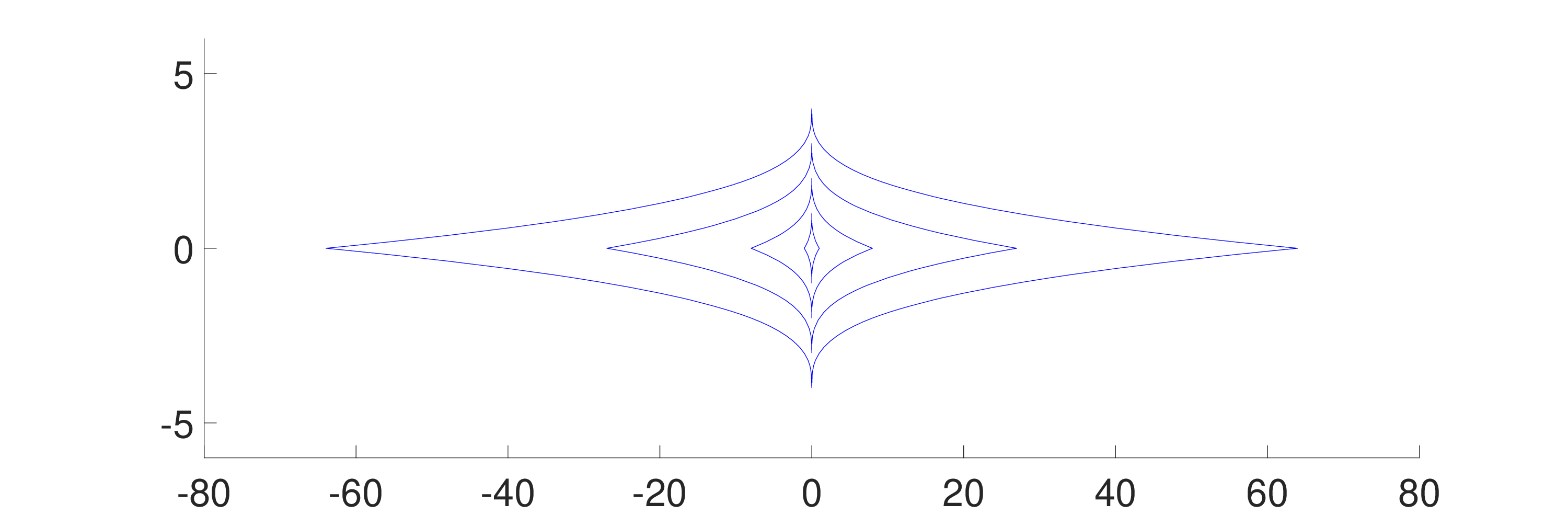}
\caption{Anisotropic anuli}\label{fig:an}
\end{figure}
Similarly to the case of standard Besov norms,  one also derives  the pointwise product estimate \eqref{eq:bony} in the corresponding anisotropic Besov-H\"older spaces.
The last key ingredient are Schauder estimates  (the equivalent of Lemma \ref{lm:schauder}) for the corresponding semigroup generated by the degenerate Kolmogorov operator $\mathcal K$.  Among the main tools to prove those,
are the special geometry induced by the anisotropic norm, and  Gaussian-type upper bounds for the fundamental solution to the Kolmogorov equation  $\mathcal K f =0$, which can be found in  \cite{lucertini2022optimal}.
Notice that  similar estimates for the purely kinetic case were previously obtained in \cite{zhang2024cauchy}. 

Once these preliminary results are available, one can carry out the study of the well-posedness for the Kolmogorov equation, the non-linear Fokker-Planck equation,
the rough martingale problem and finally the degenerate McKean SDE \eqref{eq:mkv_deg} in the anisotropic setting in \cite{issoglio_et.al24}.
Well-posedness for the latter
follows by similar arguments to those of the proof of Theorem \ref{thm:EUMcKean}.

\subsubsection*{Elsewhere in the literature}

As mentioned in the Introduction,  well-posedness for equations with McKean-Vlasov coefficients
for drift and diffusion of regular convolutional type
of the marginal measure with a bounded function $\xi \mapsto \shb(t,x, \xi)$
both in the drift and the diffusion,
was recently proved
in \cite{veretennikov2023weak, rondelli} for the  kinetic (and more general degenerate) case.
The function $\shb$ is supposed 
to be uniformly continuous w.r.t.\ to $(x, \xi)$
and the diffusion convolution function is non-degenerate.
\cite{veretennikov2023weak}
shows  weak existence using tightness arguments and
\cite{rondelli} shows weak well-posedness under some more restrictive assumptions on the coefficients.
Another significant contribution in the kinetic case
is provided by
\cite{zhang2021second}, where
the convolution function is more irregular and density dependent. 
In \cite{hao2021singular} the authors consider the kinetic case with convolutional dependence on the law through
a generalised function living in some negative Besov space of order $-\beta$. 
They allow $\beta\in(1/2, 2/3)$, hence going beyond the Young regime, and make use of paracontrolled calculus. 
In both papers \cite{zhang2021second, hao2021singular}, mollification techniques on the original process $X$ are exploited to prove  existence, while uniqueness is proved by Markov marginal uniqueness techniques. 
In particular, in   \cite{hao2021singular}  they  apply   Girsanov's transformation to obtain an SDE for which Markov marginal uniqueness techniques are subsequently used. 

In \cite{HaoZhangRoeck}  the authors  consider a kinetic mean-field type SDE driven by an $\alpha$-stable noise for $\alpha\in(1, 2]$. The drift lives in a (possibly) negative Besov space, and it is convoluted with the unknown law of the solution, like in the setting presented in Section \ref{sec:LpLq_p}. The convolution operation smoothens the drift, so that it becomes a function and there are no issues related to the definition of solution to the SDE. Then the authors study (strong and weak) well-posedness of mean-field kinetic SDE,
under different hypotheses.
Moreover in \cite{hao2024} the authors investigate the propagation of chaos for a particle system whose mean-field limit is the equation studied in \cite{HaoZhangRoeck}. In the interacting particle system  the drift is of convolutional type like in Section \ref{sec:LpLq_p}, and since the interaction is of moderate nature, the final drift form includes a Besov distribution convoluted twice, once with the empirical measure and once with the mollifying kernel typical of moderate interaction.

\bigskip
{\bf Acknowledgments}
The second named authors acknowledges partial financial support under the National Recovery and Resilience Plan (NRRP), through the Project  “Non–Markovian Dynamics and Non-local Equations” – 202277N5H9 - CUP: D53D23005670006, funded by the Italian Ministry of Ministry of University and Research.

   The research of the third named author was partially supported by the  ANR-22-CE40-0015-01 (SDAIM).

\bibliographystyle{plain}
\bibliography{../../../BIBLIO_FILE/Biblio_Tesi_Luca}
\end{document}